\documentclass[a4paper,oneside,11pt]{article} 
\addtolength\topmargin{-.5in} 
\addtolength\textheight{1.in} 
\addtolength\oddsidemargin{-.045\textwidth} 
\addtolength\textwidth{.09\textwidth} 


\tolerance=1414 
\setlength\emergencystretch{1.5em} 
\hbadness=1414 
\setlength\hfuzz{4pt} 
\widowpenalty=10000 
\raggedbottom 
\setlength\vfuzz{3pt} 

\usepackage[utf8]{inputenc} 
\usepackage[T1]{fontenc} 
\usepackage{textcomp} 
\usepackage[english]{babel} 
\usepackage{indentfirst}
\usepackage[autolanguage]{numprint} 
\usepackage{hyperref} 
\hypersetup{colorlinks=true,linkcolor=blue,citecolor=red,urlcolor=blue}
\usepackage{graphicx} 
\usepackage{verbatim} 
\usepackage{makeidx} 
\usepackage[refpage]{nomencl} 
\usepackage{enumitem} 
\usepackage{tikz} 
\usetikzlibrary{matrix,arrows,patterns}
\usepackage{multicol}
\usepackage{fullpage}
\usepackage{comment}
\usepackage{stmaryrd}
\usepackage{ulem}
\usepackage{tikz}
\usepackage[mathscr]{euscript}



\usepackage{amsmath,amssymb,amsthm,amsfonts} 
\usepackage[all]{xy} 

\numberwithin{equation}{section}


\newcounter{Main}

\theoremstyle{plain} 


\swapnumbers 
\theoremstyle{definition} 
\newtheorem{Def}{Definition}[section] 
\newtheorem{Def,Thm}[Def]{Definition and theorem} 
\newtheorem{Def,Prop}[Def]{Proposition-definition} 


\theoremstyle{plain} 
\newtheorem{Prop}[Def]{Proposition} 
\newtheorem{Lemma}[Def]{Lemma} 

\newtheorem{Theorem}[Def]{Theorem} 
\newtheorem{Corollary}{Corollary}[Def] 
\theoremstyle{remark} 
\newtheorem{Example}[Def]{Example} 
\newtheorem{Remark}[Def]{Remark} 

\usepackage{xcolor}
\usepackage[all]{xy}

\newcommand\sbmattrix[4]{\textnormal{\scriptsize$\left(\begin{array}{cc}#1&#2\\#3&#4\end{array}\right)$\normalsize}}

\usepackage{mathtools}

\title{Computing quaternionic representations via twisted forms of Bruhat-Tits trees} 
\author{Luis Arenas-Carmona\footnote{Universidad de Chile, Facultad de Ciencias, Casilla 652, Santiago, Chile. Email: learenas@u.uchile.cl.} \, and Claudio Bravo\footnote{Universidad de Talca, Instituto de Matemáticas, Talca, Chile. Email: claudio.bravo@utalca.cl}} 

\date{}


\begin{document} 

\maketitle

\begin{abstract}
This work is devoted to the study 
of representations
of finite subgroups of the group 
of units of quaternion division 
algebras over a global or local 
field arising from the inclusion via extension 
of scalars splitting the algebra.
Following a question by Serre, we 
study the set $\mathrm{IF}$ of conjugacy 
classes of integral representations 
that are conjugates of the given representation 
over the field. The set $\mathrm{IF}$ is often called the set of integral forms in the literature.
In previous works, we have seen that, for a given
representation, the set $\mathrm{IF}$ can be indexed by the vertex
set of a suitable subgraph of the Bruhat-Tits tree for
the special linear group. In this work, we describe 
a construction that allows the simultaneous study
of the set $\mathrm{IF}$ over different splitting fields.
For this, we devise and use a theory of twisted Galois 
form of Bruhat-Tits trees. With this tool,
we explicitly compute, in most cases,
the cardinality of $\mathrm{IF}$ for the representation of the classical
quaternion group of order $8$ studied by Serre, Feit and 
others, as much as for other similar groups.
\\
\textbf{MSC codes:} primary 20C10, 20E08; secondary 12G05, 11R33, 16H05.\\
\textbf{Keywords:} Integral representations, Bruhat-Tits trees, twisted forms, quaternion algebras.
\end{abstract}


\section{Introduction}\label{section intro}

Representation theory is the area of mathematics devoted to understanding how algebraic structures can be described as linear transformations of vector spaces and related structures.
This domain has a significant connection with other classical areas such as physics, geometry, algebra and number theory.

In all of this work, upper case letters like $K$ denote 
local fields. Lower case letters like $k$ denote  global
fields (often number fields). Finally, we use $\mathtt{K}$ 
for a field that could be either,
local or global. In all cases, the symbols
$\mathcal{O}_K$, $\mathcal{O}_k$ or
$\mathcal{O}_\mathtt{K}$ denote the corresponding 
ring of integers. For global function fields
this requires the choice of a maximal affine subset,
but we never explicitly refer to this case,
so this might be ignored.

For a global or local field $\mathtt{K}$, the (integral) 
$\mathcal{O}_\mathtt{K}$-representations 
of a group $G$, i.e., the homomorphisms 
$G \to \mathrm{GL}_n(\mathcal{O}_\mathtt{K})$, play a significant 
role in various arithmetic or geometrical phenomena, including the 
arithmetic of group rings and orders, the theory of integral lattices 
and genera, local–global structures in arithmetic geometry, and the 
study of integral invariants arising in algebraic K-theory, such as 
Grothendieck and Whitehead groups (see \cite{Reiner}).
In analogy to the
representation theory on $\mathtt{K}$, one possible approach 
to study these 
representations is to classify $\mathcal{O}_\mathtt{K}[G]$-modules.
Using that approach, integral representations of cyclic and 
related groups have already been studied in works such as
\cite{HellerReiner1,HellerReinerII,Reiner1,ReinerRoggenkamp,Kang}.

Let us fix a linear representation 
$\rho: G \to \mathrm{GL}_n(\mathtt{K})$ 
of a finite group $G$. 
We denote by $\mathrm{IF}_{\rho}^{G}(\mathtt{K})$ the set of 
integral forms of $\rho$, i.e.,
 $\mathrm{GL}_n(\mathcal{O}_\mathtt{K})$-conjugacy 
classes of integral representations that are 
$\mathrm{GL}_n(\mathtt{K})$-conjugates of
$\rho$. In 1997,  J.-P. Serre asks, in $3$ letters to 
W. Feit, whether it is possible that 
$\mathrm{IF}_{\rho}^{G}(\mathtt{K})=\varnothing$.
He considers a number field $\mathtt{K}=k$. Then he proceeds to
answer this question for the particular case where $G$ is the 
quaternion group $Q_8$, $\rho$ is its unique
faithful two dimensional representation,
and $k$ is a quadratic field over which $\rho$ is defined.
The fact that the Schur index of $\rho$ is $2$, and therefore
$\rho$ fails to have a unique minimal field of definition,
makes this question significantly harder to answer.

Assume that $\mathrm{char}(\mathtt{K})\neq 2$, 
and let $\mathfrak{A}_\mathtt{K}= 
\big(\frac{a,b}{\mathtt{K}}\big)$ be a 
quaternion division algebra.
A quaternionic group $G$ is a finite group 
$G\subseteq\mathfrak{A}_\mathtt{K}^*$ of invertible quaternions.
The usual quaternionic group $Q_8$, for instance, is defined 
by taking $\mathfrak{A}_{\mathbb{Q}}=
\big(\frac{-1,-1}{\mathbb{Q}}\big)$.
These groups have, by definition, a faithful two dimensional
representation of Schur index $2$, whose study shares
the difficulties mentioned above.
A splitting field for $\mathfrak{A}_\mathtt{K}$ is any extension
$\mathtt{L}/\mathtt{K}$ for which $\mathfrak{A}_\mathtt{L}:=
 \mathtt{L}\otimes_\mathtt{K}\mathfrak{A}_\mathtt{K}$
 is isomorphic to a matrix algebra. Quaternionic groups
 have a natural representation for every splitting field.
 It is well known that,
 when $\mathtt{K}=K$ is a local field, then every even
 degree finite extension is a splitting field, as follows from 
 the behavior of the Hasse invariant under extensions, see for 
 instance \cite[\S XIII, Cor. 3]{Serre-localfields}. When 
 $\mathtt{K}=k$ is a global field, an algebraic extension is a 
 splitting field if it is so for every completion, by the 
 Albert–Brauer–Hasse–Noether Theorem.

In \cite{CliffRitterWeiss} Cliff, Ritter and Weiss had shown that the irreducible $2$-dimensional (complex) representation $\bar{\rho}$ of the quaternion group $G=Q_8$ can be realized over $k=\mathbb{Q}(\sqrt{-35})$ but not over $\mathcal{O}_k$.
More generally, consider an imaginary quadratic field 
$k=\mathbb{Q}(\sqrt{-N})$ where $N$ is a square-free integer 
such that $N\equiv 3(\text{mod } 8)$, so that $\bar{\rho}$ 
can be realized over $k$. It is shown in \cite{SerreLocalRep} 
that $\bar{\rho}$ can be realized over 
$\mathcal{O}_k$ if and only if $N$ can be written in 
the form $x^2+2y^2$ for some pair of integers $x,y$,
i.e., whenever every prime divisor of $N$ is congruent 
$(\text{mod } 8)$ to either $1$ or $3$.
Moreover, the article explains how this result can be derived 
from the theory of genera, as developed by Gauss in his 
Disquisitiones and by Hilbert in his Zahlbericht.

When $\mathrm{IF}_{\rho}^{G}(\mathtt{K})$ is nonempty, one can ask for its cardinality.
In this work, we answer this latter question for some well-known 
quaternionic groups, as for instance, for $Q_8$, the Hurwitz 
group $\mathrm{SL}_2(\mathbb{F}_3)$ and the dicyclic group, over
arbitrary local fields $K$ whose characteristic is not $2$. Note
that, for a local field $K$, the set 
$\mathrm{IF}_{\rho}^{G}(K)$ of local 
integral forms is always 
nonempty. This can be easily proven from Prop. 2 in 
\cite[\S II.1.3]{SerreTrees}.  In fact, a finite group
is bounded and the determinants of its elements are
units, so it is always contained in a 
maximal compact subgroup, which is a conjugate of $\mathrm{GL}_2(\mathcal{O}_K)$.

Let $K$ be a local field, let $G$ be a finite group, and let 
$\rho$ be a $n$-dimensional representation $\rho:G \to \mathrm{GL}_n(K)$.
In \S \ref{branches and Int rep}, we 
describe an explicit bijection between
the set $\mathrm{IF}_{\rho}^{G}(K)$ and the vertex 
set of certain subcomplex of the Bruhat-Tits building 
$\mathcal{X}=\mathcal{X}(\mathrm{SL}_n,K)$. 
This complex, called the branch of $G$ in the sequel, is defined 
as the largest subcomplex of $\mathcal{X}$ 
whose vertices correspond to maximal orders in 
$\mathbb{M}_n(K)$ 
containing the image $\rho(G)$.
For $n=2$, branches 
have already been described by the authors in
\cite{ArenasBranches,ArenasSaavedra,ArenasBravoExt,ArenasBravoCar2}.
For general $n$,
we still have some partial descriptions 
due to Shemanske, Linowitz, El Maazouz, Nebe, Stanojkovski 
and the first author. See
\cite{Shemanske,ArenasBordeaux,LinShe,Nebe}.

The $\mathcal{O}_K$-module $\mathcal{O}_K[\rho(G)]$ 
generated by $\rho(G)$ 
is an $\mathcal{O}_K$-order, i.e., a $\mathcal{O}_K$-
lattice with a ring 
structure, as a subset of $\mathbb{M}_n(K)$.
Assuming that $\mathcal{O}_K[\rho(G)]$ is maximal, 
the integral representations of $G$ have been studied 
by Manilin in \cite{Manilin1,Manilin2,Manilin3},
among other works.
However, this is not often the case. For instance, if $G$
is a quaternionic group, then it is contained in the unique maximal order of the corresponding quaternion algebra
$\mathfrak{A}_\mathtt{K}$, and its image in 
$\mathfrak{A}_\mathtt{L}$,
for a splitting field $\mathtt{L}$, is never maximal
(see \S \ref{subsection int rep max orders}).
In the non-maximal case, integral representations of some 
finite subgroups of $\mathrm{GL}_2(K)$, 
such as cyclic and dihedral groups, have been studied in 
\cite{ArenasAguiloSaavedra} by using branches 
in Bruhat-Tits trees. This theory does apply to
quaternionic groups, but it fails to allow the simultaneous 
study of multiple splitting fields.

In this work, we develop a method that extends the
theory in \cite{ArenasAguiloSaavedra}, while allowing
the study of representations
of quaternionic groups that arise from extending
scalars to multiple splitting fields.
To do so, we introduce in \S 
\ref{subsection forms of BTT}-\S 
\ref{section branches in galois forms} 
a theory of Galois twisted forms of Bruhat-Tits trees and branches. Our main result in this direction is Th. 
\ref{main teo 0}. In \S \ref{subsection int rep max orders} we 
describe in this terms the local integral representations,
over a splitting field $L$ of the maximal order in a quaternion
division $K$-algebra $\mathfrak{A}_K$. This is the content
of Cor. \ref{main teo 1}. Descriptions of the 
$\mathcal{O}_L$-order spanned by the maximal 
$\mathcal{O}_K$-order in $\mathfrak{A}_K$ are known,
see for instance \cite[Lemmas 2.10-11]{LiXueYu}.
However, our method gives us some additional information.
For instance, over a field containing both, a ramified
and an unramified quadratic extension, the representations
defined over either can be explicitly identified.
Then we apply this result to the description 
of local integral representations of some classical 
quaternionic  groups such as $Q_8$,
$\mathrm{SL}_2(\mathbb{F}_3)$
or the dicyclic group $C_3 \rtimes C_4$. See \S 
\ref{subsection int rep max orders} and 
Th. \ref{main teo 2}-\ref{main teo 4}.
We finally illustrate how this theory can be applied
to the study of global representations.
In particular, Th. \ref{main teo 5} describes the number of 
conjugacy classes of global integral representations of $Q_8$ 
in the case where 
$\mathrm{IF}_{\rho}^{G}(k)\neq\varnothing$.
Throughout, we use the notation $\sharp S$ for
the cardinality of a set $S$.

\subparagraph{Convention on graphs.}\label{subsection conventions}
In the sequel, by a combinatorial graph $\mathfrak{c}$,
we mean a set $V$, whose elements are called vertices,
together with a set $A$ of pairs of elements in
$V$ called edges. Every combinatorial graph 
has an underlying topological space 
$C$ associated to $\mathfrak{c}$. 
The space $C$ is obtained from $\mathfrak{c}$ 
by gluing a copy of the interval 
$[0,1]\subseteq\mathbb{R}$ by the 
endpoints to the vertices $v$ and $w$, for every edge
$\{v,w\}\in A$. The vertex set $V$ is, therefore,
a discrete subset of $C$. The pair
$\mathcal{C}=(\mathfrak{c},C)$
is called the topological graph associated to
$\mathfrak{c}$. We call $\mathcal{C}$ a topological tree
when $\mathfrak{c}$ is a tree.
A continuous map $f:\mathcal{C}\rightarrow\mathcal{C}'$
between topological graphs is just a map
between the corresponding topological spaces.
However, such $f$ is called a simplicial map
if it is induced from a suitable morphism of
combinatorial graphs, i.e., a map for the vertex set $V$
of $\mathcal{C}$ to the vertex set $V'$
of $\mathcal{C}'$ sending edges to edges.
We write $x\in\mathcal{C}$ instead of $x\in C$. 
We say that a subset $S$ of the space $C$ is a
subgraph, when it is the subspace corresponding to a subgraph of
$\mathfrak{c}$. In this case, we often denote the topological subgraph 
as $\mathcal{S}$ and use expressions like \textit{let
$\mathcal{S}\subseteq \mathcal{C}$ be a subgraph}.
Similarly, the expression $\mathcal{C}\subseteq\mathcal{C}'$
means that the space $C$ is contained into the space $C'$
corresponding to $\mathcal{C}'$, usually after a suitable 
identification. In this case, we do not assume that
$\mathcal{C}$ is a subgraph unless explicitly stated.
For instance, the set of vertices in $\mathcal{C}$ might not coincide
with the set of vertices of $\mathcal{C}'$ that are contained in 
$\mathcal{C}$. Other similar conventions are used. 
When we mean that $x$ is a vertex of the graph $\mathfrak{c}$
we write $x\in V(\mathcal{C}):=V(\mathfrak{c})$ instead.

\begin{figure}
    \unitlength 1mm 
\linethickness{0.4pt}
\ifx\plotpoint\undefined\newsavebox{\plotpoint}\fi 
    \[
    \begin{picture}(30,14)(0,40)
\put(15.2,43.2){$\bullet$}
\put(15.2,55.4){$\bullet$}
\put(16,44){\line(0,1){12}}
\put(3.2,43.2){$\bullet$}
\put(3.2,55.4){$\bullet$}
\put(4,56){\line(1,0){12}}
\put(4,44){\line(1,0){24}}
\put(27.2,43.2){$\bullet$}
\end{picture}\qquad\qquad 
\begin{picture}(30,14)(0,40)
\put(15.2,43.2){$\bullet$}
\put(15.2,55.4){$\bullet$}
\put(16,44){\line(0,1){12}}
\put(15.2,47.2){$\bullet$}
\put(15.2,51.4){$\bullet$}
\put(3.2,43.2){$\bullet$}
\put(3.2,55.4){$\bullet$}
\put(4,56){\line(1,0){12}}
\put(7.2,55.2){$\bullet$}
\put(11.2,55.2){$\bullet$}
\put(4,44){\line(1,0){24}}
\put(7.2,43.2){$\bullet$}
\put(11.2,43.4){$\bullet$}
\put(19.2,43.2){$\bullet$}
\put(23.2,43.4){$\bullet$}
\put(27.2,43.2){$\bullet$}
\end{picture}
\]
    \caption{A graph $\mathcal{C}$ (left) and its 
    subdivision $\mathcal{C}(3)$ (right).}
    \label{f1n}
\end{figure}
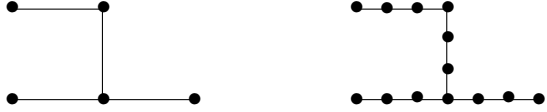

If $\mathcal{C}=(\mathfrak{c},C)$ is a topological tree, 
the space $C$ has a natural metric $d$ that reduces to the usual
metric on each interval and 
satisfies $d(x,z)=d(x,y)+d(y,z)$ whenever $x$ and $z$ are in 
different connected components of $C \smallsetminus \{y\}$.
The $n$-th subdivision $\mathcal{C}(n)$ of a topological
graph $\mathcal{C}$ is the graph obtained by replacing
every edge in $\mathcal{C}$ by a path of length
$n$, as illustrated in Fig. \ref{f1n} for $n=3$.
Note that the topological spaces subjacent to
$\mathcal{C}(n)$ and $\mathcal{C}$ are homeomorphic.
The metric in $\mathcal{C}(n)$ is often normalized, 
in what follows, so that these two spaces can be
identified.

The real line can be turn into a topological graph
$\mathcal{R}=(\mathfrak{r},\mathbb{R})$ in a way that every
integer is a vertex. Integral intervals are connected
(topological) subgraphs of $\mathcal{R}$. A line $\mathcal{P}$ in a 
topological tree $\mathcal{C}$ is the image (as a topological
graph) of  an injective simplicial map defined on 
an integral interval $\mathcal{J}$. When
$\mathcal{J}=\big(\mathfrak{j},[0,\infty)\big)$, for
a suitable graph $\mathfrak{j}$, then
$\mathcal{P}$ is called a ray. If $\mathcal{J}=\mathcal{R}$,
then $\mathcal{P}$ is called a maximal line. Two rays are
called equivalent if their intersection is a ray. Equivalent
classes of rays are called visual limits and often drawn as
stars in diagrams. The set of visual limits in a graph
$\mathcal{C}$ is denoted by $\partial_\infty(\mathcal{C})$.
A ray in a tree has a unique visual limit
and a maximal line has two. A simplicial embedding
$f:\mathcal{C}\rightarrow\mathcal{C}'$, between trees,
induces an inclusion $\partial_\infty(f):
\partial_\infty(\mathcal{C})\rightarrow
\partial_\infty(\mathcal{C}')$.
The visual limits of the subdivision
$\mathcal{C}(n)$ can be identified naturally with those of
$\mathcal{C}$.

\section{Main results}\label{section main results}

In the sequel, we consider a quaternionic group $G$ and the
representation $\rho$ obtained by extension of scalars to an
arbitrary field extension $\mathtt{L}/\mathtt{K}$ splitting the
algebra.  In this context, we describe the set 
$\mathrm{IF}_{\rho}^{G}(\mathtt{L})$ of 
$\mathcal{O}_\mathtt{L}$-conjugacy classes of 
integral representations
that are $\mathtt{L}$-conjugates of $\rho$, 
assuming that it is not empty.
The conditions under which 
$\mathrm{IF}_{\rho}^{G}(\mathtt{L})\neq\varnothing$ are
already known for the examples treated here.
This was discussed in \S\ref{section intro} for the local
case, which is the one we consider until Th. \ref{main teo 4}. 
See \S3 of the last letter in \cite{SerreLocalRep} for the 
particular global
cases considered in Th. \ref{main teo 5}. 

Here, we consider the local case $\mathtt{K}=K$.
Note that, when the quaternionic group is contained in 
a split algebra $\mathfrak{A}$ over $K$, 
i.e., when  $\mathfrak{A} \cong \mathbb{M}_2(K)$, we 
can naturally define a tree from $\mathfrak{A}$ by 
considering the maximal $\mathcal{O}_K$-orders in
$\mathfrak{A}$ as vertices, as described in 
\cite{ArenasBranches}. This tree coincides with the 
Bruhat-Tits tree of $\mathrm{SL}_2(K)$.
In particular, we can apply the machinery developed in 
\S \ref{branches and Int rep} in order to determine the 
integral representation of finite multiplicative
groups contained in split quaternion algebras.
In order to study the behavior of maximal 
orders in the matrix algebra under a field extension
it is useful to define the tree $\mathcal{T}$ in a way
that allows us to consider the tree $\mathcal{T}_K$,
whose vertices are the maximal $\mathcal{O}_K$-orders
in $\mathbb{M}_2(K)$, as a subset of the tree 
$\mathcal{T}_L$, whose vertices are the maximal 
$\mathcal{O}_L$-orders in $\mathbb{M}_2(L)$.
The easiest way to do this is defining $\mathcal{T}$
as a topological graph as described at the 
end of \S\ref{section intro}.
The embedding $\mathcal{T}_K\hookrightarrow\mathcal{T}_L$
is an embedding of topological spaces that is induced
by a morphism of graphs only when $L/K$ is an unramified
extension. For ramified extensions, neighboring vertices
of $\mathcal{T}_K$ are no longer neighbors as vertices
in $\mathcal{T}_L$. They lie at a distance equal
to the ramification index $e=e(L/K)$.
In particular, the subdivision $\mathcal{T}_K(e)$
is a subgraph of $\mathcal{T}_L$. 
See \S \ref{subsection ext BTT} for
additional details.

In contrast, when $\mathfrak{A}$ is a division algebra, 
i.e., when it fails to be split, we can not directly 
define a tree from $\mathfrak{A}$.
In this case, we have to take an algebraic (Galois) 
extension $L/K$ splitting $\mathfrak{A}$ and then to 
consider the tree $\mathcal{T}=
\mathcal{X}(\mathrm{SL}_2,L)$ defined 
from $\mathfrak{A}_L=\mathfrak{A} \otimes_K L \cong 
\mathbb{M}_2(L)$. Note that $\mathfrak{A}$ can be view 
as a $K$-subalgebra of $\mathfrak{A}_L$, as we do in the 
sequel. The main issue 
with this technique is that we fail to have a unique 
minimal splitting field, so we are bound to make a 
different computation for every such field.
As we prove in \S \ref{subsection forms of BTT}, 
this problem can be overcome by defining the tree as a twisted
$\mathcal{G}$-form $\widehat{\mathcal{T}}$
of the usual Bruhat-Tits tree 
$\mathcal{T}$, where $\mathcal{G}=\mathrm{Gal}(L/K)$.
In the cohomological language, recall that the pointed set
$H^1\big(\mathcal{G}, \mathrm{PGL}_2(L)\big)$
 indexes all the quaternion algebras splitting at $L$.
 Similarly, if we denote by $\mathrm{Simp}(\mathcal{T})$ the 
 $\mathcal{G}$-group of all simplicial homeomorphisms of 
 $\mathcal{T}=\mathcal{T}_L$, then the pointed set  
 $H^1\big(\mathcal{G}, \mathrm{Simp}
 (\mathcal{T})\big)$ indexes the isomorphism classes
of twisted $\mathcal{G}$-forms of $\mathcal{T}$, as 
$\mathcal{G}$-sets, in the sense described in
\cite[\S III.1]{Serre-CohomologieGaloisienne}. 
See Prop. \ref{prop cohgal for trees} for details.
These forms inherit the
structure as topological graphs from $\mathcal{T}$, 
since $\mathcal{G}$ acts by simplicial maps.
The tree associated to
$\mathfrak{A}$ can be alternatively defined as a 
representative of the class of the $\mathcal{G}$-forms of 
$\mathcal{T}$ defined by $[\mathfrak{A}]$ via the natural 
map:
$$H^1\big(\mathcal{G}, \mathrm{PGL}_2(L)\big) \to 
H^1\big(\mathcal{G}, \mathrm{Simp}(\mathcal{T})\big).$$
See Lemma \ref{lemma injective arrow for forms} for 
details. The tree $\widehat{\mathcal{T}}$ defined
in this way is expected to satisfy the same conventions as
the classical Bruhat-Tits tree.
Indeed, in \S\ref{section branches in galois forms}, 
we prove next result:

\begin{Theorem}\label{main teo 0}
Let $\mathfrak{A}$ be a quaternion division algebra 
over $K$. Let $\widehat{\mathcal{T}}$ be the twisted
form corresponding to $\mathfrak{A}$.
Let $\mathfrak{H}$ be an order in 
$\mathfrak{A}$, and write
$\mathfrak{H}_E\subseteq\mathfrak{A}_E$ for the 
$\mathcal{O}_E$-order it generates, for every 
intermediate field $K \subseteq E \subseteq L$. Then the following 
statements hold:
\begin{itemize}
\item[(1)] There is an inclusion that preserves the correspondence 
$E\mapsto\widehat{\mathcal{T}}_E$, which associates a subspace
of $\widehat{\mathcal{T}}$, homeomorphic to the Bruhat-Tits 
tree of $E$, to every field $E$ with 
$K\subseteq E\subseteq L$ that splits $\mathfrak{A}$.
Furthermore, $\widehat{\mathcal{T}}_E$ is
a topological graph, and the embedding
$\widehat{\mathcal{T}}_E\big(e(L/E)\big)
\hookrightarrow\widehat{\mathcal{T}}$ is simplicial.
\item[(2)] There exists an explicit subtree $\mathcal{S}$ of
$\widehat{\mathcal{T}}$, independent of $E$, such that the
maximal orders in $\mathfrak{A}_E$ containing 
$\mathfrak{H}_E$ are naturally in correspondence
with the vertices of the tree 
$\widehat{\mathcal{T}}_E$
lying inside $\mathcal{S}$, for every intermediate
field $E$ splitting $\mathfrak{A}$. Furthermore, the subset
$\mathcal{S}_E:= 
\mathcal{S}\cap\widehat{\mathcal{T}}_E$
is a subgraph of $\widehat{\mathcal{T}}_E$.
\end{itemize}
\end{Theorem}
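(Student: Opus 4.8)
The plan is to reduce every assertion to the classical (untwisted) Bruhat--Tits picture over each intermediate field by Galois descent, exploiting the hypothesis that $E$ splits $\mathfrak{A}$. Fix a splitting field $E$ with $K\subseteq E\subseteq L$ and set $\mathcal{G}_E=\mathrm{Gal}(L/E)\subseteq\mathcal{G}$. Since $\widehat{\mathcal{T}}$ is the $\mathcal{G}$-form attached to the class $[\mathfrak{A}]\in H^1(\mathcal{G},\mathrm{PGL}_2(L))$ (Prop.~\ref{prop cohgal for trees}, Lemma~\ref{lemma injective arrow for forms}), its restriction to $\mathcal{G}_E$ is the form attached to the image of $[\mathfrak{A}]$ under $H^1(\mathcal{G},\mathrm{PGL}_2(L))\to H^1(\mathcal{G}_E,\mathrm{PGL}_2(L))$, which is $[\mathfrak{A}_E]$. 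Because $E$ splits $\mathfrak{A}$, this class is trivial, so the restricted cocycle is a coboundary $\partial b$ with $b\in\mathrm{PGL}_2(L)\subseteq\mathrm{Simp}(\mathcal{T})$. Hence $b$ is a simplicial homeomorphism of $\mathcal{T}$ intertwining the twisted $\mathcal{G}_E$-action on $\widehat{\mathcal{T}}$ with the standard $\mathcal{G}_E$-action on $\mathcal{T}=\mathcal{T}_L$. I would then \emph{define} $\widehat{\mathcal{T}}_E$ to be the fixed-point set $\widehat{\mathcal{T}}^{\mathcal{G}_E}$ for the twisted action; via $b$ it is carried homeomorphically onto $\mathcal{T}_L^{\mathcal{G}_E}$, and this subspace is canonical, independent of $b$.

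For Part~(1) it then remains to identify $\mathcal{T}_L^{\mathcal{G}_E}$ with the Bruhat--Tits tree $\mathcal{T}_E$ of $E$ and to control its simplicial structure. This is exactly the descent recalled before the statement: a maximal $\mathcal{O}_E$-order $\mathcal{D}$ in $\mathfrak{A}_E\cong\mathbb{M}_2(E)$ generates a maximal $\mathcal{O}_L$-order $\mathcal{D}_L$, giving a vertex map $\mathcal{T}_E\to\mathcal{T}_L$; extending along geodesics realizes $\mathcal{T}_E$ as a subspace in which neighbouring $E$-vertices lie at distance $e(L/E)$, so that $\mathcal{T}_E\big(e(L/E)\big)\hookrightarrow\mathcal{T}_L$ is simplicial (the argument of \S\ref{subsection ext BTT} for $E=K$ applies verbatim). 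Transporting by $b$ yields the homeomorphism $\widehat{\mathcal{T}}_E\cong\mathcal{T}_E$ and the simplicial embedding $\widehat{\mathcal{T}}_E\big(e(L/E)\big)\hookrightarrow\widehat{\mathcal{T}}$, so $\widehat{\mathcal{T}}_E$ is a topological graph. Monotonicity of $E\mapsto\widehat{\mathcal{T}}_E$ is immediate from the fixed-point description: if $K\subseteq E\subseteq E'\subseteq L$ with both splitting, then $\mathcal{G}_{E'}\subseteq\mathcal{G}_E$ forces $\widehat{\mathcal{T}}_E=\widehat{\mathcal{T}}^{\mathcal{G}_E}\subseteq\widehat{\mathcal{T}}^{\mathcal{G}_{E'}}=\widehat{\mathcal{T}}_{E'}$, and the identity $(\mathcal{D}_{E'})_L=\mathcal{D}_L$ shows the embeddings are compatible.

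For Part~(2) I would take $\mathcal{S}$ to be the branch of $\mathfrak{H}_L$ in $\mathcal{T}_L=\widehat{\mathcal{T}}$, that is, the largest subtree whose vertices are the maximal $\mathcal{O}_L$-orders of $\mathfrak{A}_L$ containing $\mathfrak{H}_L$, which is convex by branch theory (\cite{ArenasBranches,ArenasAguiloSaavedra}) and is manifestly independent of $E$. Since $\mathfrak{H}\subseteq\mathfrak{A}=\mathfrak{A}_L^{\mathcal{G}}$, the order $\mathfrak{H}_L$ is stable under the twisted $\mathcal{G}$-action, hence so is $\mathcal{S}$, making it a subtree of $\widehat{\mathcal{T}}$. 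The correspondence follows from an elementary order-theoretic equivalence: for the maximal $\mathcal{O}_E$-order $\mathcal{D}$ at a vertex $v$ of $\widehat{\mathcal{T}}_E$, one has $\mathfrak{H}_E\subseteq\mathcal{D}$ if and only if $\mathfrak{H}_L\subseteq\mathcal{D}_L$, i.e.\ iff $v\in\mathcal{S}$. The forward direction uses that $\mathcal{D}_L$ is maximal and contains $\mathfrak{H}_E$, hence $\mathfrak{H}_L$; the converse uses $\mathcal{D}=\mathcal{D}_L\cap\mathfrak{A}_E$ (by maximality of $\mathcal{D}$), giving $\mathfrak{H}_E\subseteq\mathcal{D}_L\cap\mathfrak{A}_E=\mathcal{D}$. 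Finally $\mathcal{S}_E=\mathcal{S}\cap\widehat{\mathcal{T}}_E$ is a subgraph of $\widehat{\mathcal{T}}_E$ because $\mathcal{S}$ is convex: if two adjacent $E$-vertices lie in $\mathcal{S}$, the $E$-edge joining them is their geodesic in $\widehat{\mathcal{T}}$ and therefore lies in $\mathcal{S}$.

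The main obstacle is the ramified case. Everything is transparent when $L/E$ is unramified, where $e(L/E)=1$ and vertices map to vertices; but for $e(L/E)>1$ one must verify that the geodesic in $\widehat{\mathcal{T}}$ realizing an $E$-edge is precisely the length-$e(L/E)$ path predicted by \S\ref{subsection ext BTT}, and that the convex subtree $\mathcal{S}$ meets $\widehat{\mathcal{T}}_E$ in full $E$-edges rather than in partial segments --- it is exactly the convexity argument above that makes $\mathcal{S}_E$ an honest subgraph. A secondary point requiring care is canonicity: the subspace $\widehat{\mathcal{T}}_E=\widehat{\mathcal{T}}^{\mathcal{G}_E}$ must not depend on the trivializing element $b$, and the descended branch $\mathcal{S}$ must not depend on the Galois extension used to present $\widehat{\mathcal{T}}$.
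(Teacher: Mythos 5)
Your reduction for Part (2) (taking $\mathcal{S}$ to be the branch $\mathcal{S}_L(\mathfrak{H}_L)$, Galois-stability of $\mathcal{S}$, and the two-way equivalence $\mathfrak{H}_E\subseteq\mathfrak{D}_E \Leftrightarrow \mathfrak{H}_L\subseteq\mathfrak{D}_L$ via $\mathfrak{D}_E=\mathfrak{D}_L\cap\mathfrak{A}_E$) is essentially the paper's argument. But Part (1) contains a genuine gap that propagates back into Part (2): you \emph{define} $\widehat{\mathcal{T}}_E$ as the fixed-point set $\widehat{\mathcal{T}}^{\mathcal{G}_{L/E}}$ of the twisted action, and then assert that (after trivializing the restricted cocycle by $b$) this set is identified with $\mathcal{T}_E$ inside $\mathcal{T}_L$. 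That identification is false whenever $L/E$ is wildly ramified: by Prop.~\ref{prop hairs} (and the references to Tits and Rousseau invoked in Ex.~\ref{ex pelitos}), the invariant space $\mathcal{T}_L^{\mathcal{G}_{L/E}}$ strictly contains the embedded copy of $\mathcal{T}_E$, sprouting ``hairs'' of positive length $\ell$ governed by the quadratic defect. A space with hairs has points of valence one, which the Bruhat--Tits tree of $E$ does not, so your $\widehat{\mathcal{T}}_E$ is not even homeomorphic to $\mathcal{T}_E$, and the claimed simplicial embedding of its subdivision fails. This is not a peripheral case: the paper's main computations ($Q_8$, the Hurwitz group) live over extensions of $\mathbb{Q}_2$, where wild ramification is the norm. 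With your definition, Part (2) also breaks, because hair vertices lying in $\mathcal{S}$ would be counted as if they were maximal $\mathcal{O}_E$-orders containing $\mathfrak{H}_E$, when they are not; this is exactly the content of Ex.~\ref{ex pelitos}, which shows the inclusion in Lemma~\ref{lemma galois cont} can be strict, and it would corrupt the representation counts the theorem is designed to produce.

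The fix is the one the paper uses: define $\widehat{\mathcal{T}}_E$ not as the invariant \emph{point set} but as the minimal subtree of $\widehat{\mathcal{T}}$ containing every $\mathcal{G}_{L/E}$-invariant \emph{visual limit}, i.e.\ the convex hull of the invariant ends (Lemma~\ref{lemma embb of non-trivial forms}). Invariant ends correspond, after your trivialization $b$, to $\mathbb{P}^1(E)\subseteq\mathbb{P}^1(L)$, and the convex hull of $\mathbb{P}^1(E)$ in $\mathcal{T}_L$ is exactly the embedded $\mathcal{T}_E$ of \S\ref{subsection ext BTT} --- no hairs --- from which your statements about the metric, the subdivision $\widehat{\mathcal{T}}_E\big(e(L/E)\big)$, and the order-theoretic correspondence in Part (2) all go through as you wrote them.
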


Next result, which is proved in \S 
\ref{subsection int rep max orders}, is a straightforward 
consequence of Th. \ref{main teo 0}.

\begin{Corollary}\label{main teo 1}
Let $\mathfrak{A}$ be a quaternion division algebra over $K$, 
and let $\mathfrak{D}$ be its unique maximal order. 
Let $\Delta \in \mathcal{O}_K^*$ be an unramified unit 
in $\mathcal{O}_K$.
Let $E_1/K$ be an algebraic extension containing $\sqrt{\Delta}$,
while $E_2/K$ is an algebraic extension containing
a quadratic extension of $K$, but not $\sqrt{\Delta}$.
Denote by $e_i=e(E_i/K)$, for $i=1,2$, the corresponding 
ramification index. Then, the following statements hold:
\begin{itemize}
\item[(1)] All representations of the form 
$\rho:\mathfrak{D} \to \mathbb{M}_2(\mathcal{O}_{E_2})$ 
are conjugates.
\item[(2)] There are exactly $e_1+1$ conjugacy classes of representations $\psi:\mathfrak{D} \to \mathbb{M}_2(\mathcal{O}_{E_1})$.
\item[(3)] The representation $\rho$ in (1) is defined
over $E_1$ precisely when $e_1$ is even.
\end{itemize}
\end{Corollary}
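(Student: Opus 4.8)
The plan is to derive Corollary \ref{main teo 1} directly from Theorem \ref{main teo 0}, applied to the maximal order $\mathfrak{H}=\mathfrak{D}$. Since $\mathfrak{D}$ is the unique maximal order of $\mathfrak{A}$, the key input is the shape of the subtree $\mathcal{S}\subseteq\widehat{\mathcal{T}}$ that encodes the maximal $\mathcal{O}_E$-orders containing $\mathfrak{D}_E$. First I would identify $\mathcal{S}$ concretely in this case. Because $\mathfrak{D}$ is a maximal order in the \emph{division} algebra $\mathfrak{A}$, its reduced structure is governed by the (unique) maximal line or fixed locus of the $\mathcal{G}$-action on $\mathcal{T}=\mathcal{T}_L$; concretely, over the splitting field $L$ the order $\mathfrak{D}_L$ is an Eichler-type order whose associated subtree $\mathcal{S}$ is expected to be a single maximal line (the ``apartment axis'' fixed by the twisting cocycle). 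Establishing that $\mathcal{S}$ is precisely this line, independent of $E$, is the crucial geometric fact, and it is where most of the work lies; I would extract it from the explicit description of $\widehat{\mathcal{T}}$ and of $\mathcal{S}$ furnished in the proof of Theorem \ref{main teo 0}.

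With $\mathcal{S}$ identified as a maximal line $\mathcal{P}$, each part follows by counting the vertices of $\widehat{\mathcal{T}}_E$ lying on $\mathcal{P}$, using part (1) of Theorem \ref{main teo 0}: the embedding $\widehat{\mathcal{T}}_E\big(e(L/E)\big)\hookrightarrow\widehat{\mathcal{T}}$ is simplicial, so the vertices of $\widehat{\mathcal{T}}_E$ sit inside $\widehat{\mathcal{T}}$ spaced at metric distance $e(L/E)$. The placement of these vertices relative to $\mathcal{P}$ depends on whether $\sqrt{\Delta}\in E$, i.e., on whether $E$ contains the unramified quadratic extension splitting $\mathfrak{A}$. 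For $E=E_2$ (not containing $\sqrt{\Delta}$), I would show that $\mathcal{P}$ meets the vertex set of $\widehat{\mathcal{T}}_{E_2}$ in a single point, because the relevant branch has length strictly less than the vertex spacing $e_2$; this gives a unique maximal order containing $\mathfrak{D}_{E_2}$, hence a single conjugacy class, proving (1). For $E=E_1$ (containing $\sqrt{\Delta}$), the axis $\mathcal{P}$ becomes a segment of $\widehat{\mathcal{T}}_{E_1}$ of the appropriate combinatorial length, and a direct count of the $\widehat{\mathcal{T}}_{E_1}$-vertices on it yields exactly $e_1+1$, proving (2).

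For part (3) I would compare the two situations, reading ``defined over $E_1$'' as: the unique conjugacy class of representations $\rho$ arising over $E_2$ (equivalently, the maximal order selected when $\sqrt{\Delta}\notin E$) also arises from a representation with image in $\mathbb{M}_2(\mathcal{O}_{E_1})$. Geometrically this asks whether the distinguished fixed vertex—the midpoint of $\mathcal{P}$—is itself a vertex of $\widehat{\mathcal{T}}_{E_1}$. Since the $\widehat{\mathcal{T}}_{E_1}$-vertices on $\mathcal{P}$ are evenly spaced and symmetric about its center, the center lies on a vertex exactly when the number $e_1+1$ of vertices is odd, i.e., when $e_1$ is even; this parity computation gives the stated criterion. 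The main obstacle throughout is the first step: pinning down $\mathcal{S}$ as a maximal line and correctly locating the $\widehat{\mathcal{T}}_E$-vertices on it for ramified $E$, since the simplicial structure of $\widehat{\mathcal{T}}_E$ only matches that of $\widehat{\mathcal{T}}$ after the $e(L/E)$-subdivision, so one must track subdivision indices carefully to avoid off-by-one errors in the counts $1$, $e_1+1$, and in the parity of (3).
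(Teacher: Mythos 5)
Your overall framework is the paper's: apply Theorem \ref{main teo 0} to $\mathfrak{H}=\mathfrak{D}$, identify the subtree $\mathcal{S}$, and count vertices of $\widehat{\mathcal{T}}_E$ inside it. But the step you yourself flag as crucial --- pinning down $\mathcal{S}$ --- is where your guess is wrong, and the error is fatal to the counts. The branch of $\mathfrak{D}_L$ is \emph{not} a maximal line. Writing $\mathfrak{D}=\mathcal{O}_K\left[\mathtt{i},\frac12(\mathtt{j}-\mathtt{1})\right]$ and using the trivialization of Eq. \eqref{eq trivialization}, the paper computes $\mathcal{S}_L(\mathfrak{D})$ as the intersection of the maximal path $\mathcal{P}_L(0,\infty)$ (the branch of $\frac{\mathbf{j}-\mathbf{1}}{2}$, via Lemma \ref{splitbranches}) with the tubular neighborhood $\mathcal{P}_L\left(\frac{1}{\sqrt{\pi_K}},-\frac{1}{\sqrt{\pi_K}}\right)^{[\nu(2\sqrt{\pi_K})]}$ (the branch of $\mathbf{i}$, via Lemmas \ref{lemma contraidos} and \ref{lemma branch and MT}); since these two paths are transverse, the intersection is the \emph{finite} segment $\mathcal{P}_L[v_0,v_1]$ of normalized length $1$, whose endpoints correspond to $\mathbb{M}_2(\mathcal{O}_L)$ and $\sbmattrix{\mathcal{O}_L}{\pi_K^{-1}\mathcal{O}_L}{\pi_K\mathcal{O}_L}{\mathcal{O}_L}$. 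Your plan is internally inconsistent on this point: if $\mathcal{S}$ were a maximal line with visual limits $0$ and $\infty$, then for $E_1\ni\sqrt{\Delta}$ that line is defined over $E_1$ and would meet $\widehat{\mathcal{T}}_{E_1}$ in infinitely many vertices, so (2) could never produce the finite answer $e_1+1$. The count $e_1+1$ is exactly the number of $E_1$-vertices on a segment of normalized length $1$, endpoints included.

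Your mechanism for (1) is also not the right one. You argue that over $E_2$ the branch ``has length strictly less than the vertex spacing $e_2$,'' but the vertex spacing in $\widehat{\mathcal{T}}_{E_2}$ is $1/e_2$ in the normalized metric, which \emph{shrinks} as ramification grows, so no such comparison can yield uniqueness. What actually forces a single vertex is the twisted Galois action: when $\sqrt{\Delta}\notin E_2$, any $\mu\in\mathcal{G}_{L/E_2}$ with $\mu(\sqrt{\Delta})=-\sqrt{\Delta}$ acts on $\widehat{\mathcal{T}}_L$ by interchanging the visual limits $0$ and $\infty$ (Lemma \ref{lemma action on p0inf}), hence it reflects the segment $\mathcal{P}_L[v_0,v_1]$ and fixes only its midpoint $v_{1/2}$; since vertices of $\widehat{\mathcal{T}}_{E_2}$ correspond to $\mathcal{G}_{L/E_2}$-stable maximal orders, one gets $S_{E_2}(\mathfrak{D})\subseteq S_L(\mathfrak{D})^{\mathcal{G}_{L/E_2},*}=\lbrace v_{1/2}\rbrace$. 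This fixed-point argument is also what (3) rests on: the unique class in (1) corresponds to $v_{1/2}$, and $v_{1/2}$ is a vertex of $\widehat{\mathcal{T}}_{E_1}$ exactly when its distance $\frac12$ to $v_0$ lies in $\frac{1}{e_1}\mathbb{Z}$, i.e., when $e_1$ is even. Your parity computation there is the right idea, but it only becomes a proof once $\mathcal{S}$ has been correctly identified as a finite segment and the representation of (1) has been located at its midpoint via the Galois argument you are missing.
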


Let $G$ and $\rho$ be as introduced at the beginning
of this section.
Consider the order $\mathfrak{H}=\mathcal{O}_K[G]$ 
generated by $G$. Integral representations of $G$ over 
every intermediate field $E$, as above, can be described 
in terms of the vertex set $V(\widehat{\mathcal{T}}_E)$ of the 
twisted tree $\widehat{\mathcal{T}}_E$ thanks to 
Th. \ref{main teo 0}.
Indeed,
Th. \ref{main teo 0b} below 
generalizes previously known results that
we recall in \S \ref{branches and Int rep}.
See Prop. \ref{prop equiv rep and conj class}.

\begin{Theorem}\label{main teo 0b}
Let $\mathfrak{L}$ be the centralizer of $\rho$.
Then, the natural action of $\mathfrak{L}_L^*$,
by conjugation on maximal orders,  
induces an
action on the space $\mathcal{S}$, defined in Th. \ref{main teo 0},
that is compatible with both, the Galois action
and the graph structure. In particular, it
induces a simplicial action of $\mathfrak{L}_E^*$ on 
$\mathcal{S}_E$, for every intermediate field
$E$ splitting the algebra.
Furthermore, there exists a bijection between $\mathrm{IF}_{\rho}^{G}(E)$
and the set of $\mathfrak{L}_E^*$-orbits 
of vertices in $\mathcal{S}_E$.
In particular, if $\rho$ is absolutely irreducible, then $\mathrm{IF}_{\rho}^{G}(E)$ is in bijection with the vertices of $\widehat{\mathcal{T}}_E$ corresponding to maximal orders 
containing $\rho(G)$.
\end{Theorem}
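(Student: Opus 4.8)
The plan is to treat the two assertions separately: first to build the $\mathfrak{L}_L^*$-action on $\mathcal{S}$ and verify its two compatibilities, then to derive the counting bijection over each intermediate field $E$ by the conjugation argument already familiar from the untwisted branch correspondence (Prop.~\ref{prop equiv rep and conj class}). Throughout I would work in the concrete model of $\widehat{\mathcal{T}}$ attached to the identification $\mathfrak{A}_L=\mathfrak{A}\otimes_K L\cong\mathbb{M}_2(L)$, in which $\mathcal{G}=\mathrm{Gal}(L/K)$ acts through the second tensor factor; thus each $\sigma\in\mathcal{G}$ is a ring automorphism of $\mathfrak{A}_L$ whose fixed algebra is $\mathfrak{A}$, and the induced permutation of maximal $\mathcal{O}_L$-orders is exactly the twisted Galois action on $\widehat{\mathcal{T}}$.

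For the action, note that $\mathfrak{L}\subseteq\mathfrak{A}$ centralizes $\mathfrak{H}=\mathcal{O}_K[G]$, so every $g\in\mathfrak{L}_L^*\subseteq\mathfrak{A}_L^*=\mathrm{GL}_2(L)$ commutes with $\mathfrak{H}_L$. Consequently $g\mathfrak{M}g^{-1}$ is a maximal order containing $\mathfrak{H}_L$ whenever $\mathfrak{M}$ is, so conjugation by $\mathfrak{L}_L^*$ permutes the vertices defining $\mathcal{S}$ and, being simplicial on $\widehat{\mathcal{T}}$, preserves the subtree $\mathcal{S}$. The compatibility with the Galois action is the semilinearity identity $\sigma(gxg^{-1})=\sigma(g)\,\sigma(x)\,\sigma(g)^{-1}$, valid because each $\sigma$ is a ring automorphism of $\mathfrak{A}_L$; this makes $\mathfrak{L}_L^*$ a $\mathcal{G}$-group acting on the $\mathcal{G}$-space $\mathcal{S}$. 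Passing to $\mathrm{Gal}(L/E)$-fixed points, and using $(\mathfrak{L}_L^*)^{\mathrm{Gal}(L/E)}=\mathfrak{L}_E^*$ together with $\mathcal{S}_E=\mathcal{S}\cap\widehat{\mathcal{T}}_E$ (Th.~\ref{main teo 0}), where $\widehat{\mathcal{T}}_E$ is the $\mathrm{Gal}(L/E)$-fixed subspace of $\widehat{\mathcal{T}}$, I would obtain an action of $\mathfrak{L}_E^*$ on $\mathcal{S}_E$; it is simplicial for the $E$-graph structure since $\mathfrak{L}_E^*\subseteq\mathrm{GL}_2(E)$ acts simplicially on the Bruhat--Tits tree $\widehat{\mathcal{T}}_E$.

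For the bijection I would fix $E$ and send an integral representation $\psi=\alpha\rho\alpha^{-1}$, with $\alpha\in\mathrm{GL}_2(E)$ and $\psi(G)\subseteq\mathrm{GL}_2(\mathcal{O}_E)$, to the maximal order $\mathfrak{M}_\alpha:=\alpha^{-1}\mathbb{M}_2(\mathcal{O}_E)\alpha$; it contains $\rho(G)$, so it is a vertex of $\mathcal{S}_E$. If $\psi=\beta\rho\beta^{-1}$ as well, then $\beta^{-1}\alpha$ centralizes $\rho(G)$, hence lies in $\mathfrak{L}_E^*$ and conjugates $\mathfrak{M}_\beta$ to $\mathfrak{M}_\alpha$; and replacing $\psi$ by a $\mathrm{GL}_2(\mathcal{O}_E)$-conjugate leaves $\mathfrak{M}_\alpha$ fixed, since $\mathrm{GL}_2(\mathcal{O}_E)$ normalizes $\mathbb{M}_2(\mathcal{O}_E)$. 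So $[\psi]\mapsto[\mathfrak{M}_\alpha]$ is a well-defined map from $\mathrm{IF}_\rho^G(E)$ to the set of $\mathfrak{L}_E^*$-orbits in $V(\mathcal{S}_E)$. Surjectivity is immediate: any vertex of $\mathcal{S}_E$ equals $\alpha^{-1}\mathbb{M}_2(\mathcal{O}_E)\alpha$ for some $\alpha$, and $\psi=\alpha\rho\alpha^{-1}$ maps back to it. For injectivity I would use that the normalizer of $\mathbb{M}_2(\mathcal{O}_E)$ in $\mathrm{GL}_2(E)$ is $E^*\,\mathrm{GL}_2(\mathcal{O}_E)$: if $\mathfrak{M}_\beta=c\mathfrak{M}_\alpha c^{-1}$ with $c\in\mathfrak{L}_E^*$, then $\beta c\alpha^{-1}=\lambda\gamma$ with $\lambda\in E^*$ and $\gamma\in\mathrm{GL}_2(\mathcal{O}_E)$, and a short computation exploiting that $c$ and $\lambda$ commute with $\rho(G)$ yields $\beta\rho\beta^{-1}=\gamma(\alpha\rho\alpha^{-1})\gamma^{-1}$, so the two forms coincide. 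The absolutely irreducible case follows because Schur's lemma then gives $\mathfrak{L}_E=E$, whence $\mathfrak{L}_E^*=E^*$ acts trivially by conjugation and the orbits reduce to single vertices.

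The main obstacle I anticipate is the first step: one must verify that the conjugation action descends to the twisted form $\widehat{\mathcal{T}}$ and interacts with the twisted Galois action rather than the untwisted one, so that the fixed-point passage returns precisely $\mathfrak{L}_E^*$ acting on $\mathcal{S}_E$. This is exactly what the model $\mathfrak{A}_L=\mathfrak{A}\otimes_K L$ secures, since it renders $\mathfrak{L}\subseteq\mathfrak{A}$ Galois-stable and reduces every compatibility to the automorphism property of the twisting maps; once this is set up, the bijection is a routine transcription of the conjugation argument from the untwisted setting of Prop.~\ref{prop equiv rep and conj class}.
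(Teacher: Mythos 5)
Your overall architecture coincides with the paper's: the vertex-level correspondence comes from Th.~\ref{main teo 0}(2), the group $\mathfrak{L}_E^*$ acts because it centralizes $\mathfrak{H}_E$, and the counting statement is the conjugation argument of Prop.~\ref{prop equiv rep and conj class} — which the paper simply cites after identifying $\widehat{\mathcal{T}}_E$ with $\mathcal{T}_E$, and which you instead re-derive inline (the map $[\psi]\mapsto[\mathfrak{M}_\alpha]$, the normalizer $E^*\mathrm{GL}_2(\mathcal{O}_E)$, Schur's lemma in the absolutely irreducible case). That part of your proposal is correct and makes the mechanism explicit.

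The genuine flaw is the clause ``where $\widehat{\mathcal{T}}_E$ is the $\mathrm{Gal}(L/E)$-fixed subspace of $\widehat{\mathcal{T}}$.'' This identification is false in general, and the paper explicitly warns against it: by Prop.~\ref{prop hairs} and Example~\ref{ex pelitos}, for a wildly ramified extension the point-wise fixed set of the Galois action strictly contains the tree of the smaller field — it grows hairs of length up to $\nu(2\sqrt{\alpha})-\frac{1}{2}\nu\big(\delta_K(\alpha)\big)$. Since the paper's main applications ($Q_8$, the Hurwitz group, the field $\Omega$) live over dyadic fields, this is the typical situation, not a corner case. The correct statement, which the paper uses (Lemma~\ref{lemma embb of non-trivial forms}), is that $\widehat{\mathcal{T}}_E$ is the \emph{minimal subtree containing the $\mathrm{Gal}(L/E)$-invariant visual limits}, equivalently the image of the tree of maximal $\mathcal{O}_E$-orders of $\mathfrak{A}_E$ under $\mathfrak{D}'\mapsto\mathfrak{D}'\otimes_{\mathcal{O}_E}\mathcal{O}_L$. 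As written, your fixed-point passage only yields an action of $\mathfrak{L}_E^*$ on $\mathcal{S}\cap\widehat{\mathcal{T}}^{\,\mathrm{Gal}(L/E)}$, which may strictly contain $\mathcal{S}_E$, so you still owe the claim that $\mathcal{S}_E$ itself is preserved. The repair is short and is exactly what the paper does: $\mathfrak{L}_E^*\subseteq\mathfrak{A}_E^*$ acts by conjugation on maximal $\mathcal{O}_E$-orders, it fixes $\mathfrak{H}_E$ element-wise, hence it permutes the maximal $\mathcal{O}_E$-orders containing $\mathfrak{H}_E$ — precisely the vertices of $\mathcal{S}_E$ by Th.~\ref{main teo 0}(2) — and the action is simplicial because it is the restriction of the simplicial $\mathrm{GL}_2(E)$-action on $\widehat{\mathcal{T}}_E\cong\mathcal{T}_E$. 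With that substitution your proof goes through and agrees with the paper's.
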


By using Th. \ref{main teo 0}, in \S \ref{subsection int rep max orders}  we 
describe the integral representations of some quaternion groups 
$G$ spanning a maximal order. These representations are
certainly absolutely irreducible since their images span the
algebra.
This result certainly describes 
the integral local representations of the Hurwitz unit group
 $G_H=\langle \mathtt{w}, \mathtt{u},\mathtt{v} 
\rangle\subseteq \mathfrak{A}^*$, where 
$$\mathfrak{A}=\left(\frac{-1,-1}{\mathbb{Q}}\right)=
\mathbb{Q}\big[ \mathtt{u},\mathtt{v} 
\vert \mathtt{u}^{2}=\mathtt{v}^{2}=-\mathtt{1}, 
\mathtt{u}\mathtt{v}+\mathtt{v}\mathtt{u}=\mathtt{0} \big]
\textnormal{ and } 
\mathtt{w}=\frac{1}{2}(-\mathtt{1}+
\mathtt{u}+\mathtt{v}+\mathtt{u}\mathtt{v}) \in \mathfrak{A}.$$
The group $G_H$ is isomorphic to $\mathrm{SL}_2(\mathbb{F}_3)$,
according to \cite[Lemma 11.2.1]{Voight}.
In \S \ref{subsection int rep max orders} we prove next 
result, which describes the integral $p$-adic representations of 
the Hurwitz unit group:

\begin{Theorem}\label{main teo 2}
Let $E$ be a Galois extension of $\mathbb{Q}_p$ containing a 
quadratic extension and $\rho: G_H \to \mathrm{GL}_2(E)$ be 
the representation obtained by extending scalars to $E$.
When $p=2$, the set $\mathrm{IF}_{\rho}^{G_H}(E)$ has exactly $e(E/K)+1$ elements when $\sqrt{-3} \in E$, while it is a singleton when $\sqrt{-3} \notin E$.
On the other hand, assume $p\neq 2$, and assume $E$ 
is any algebraic extension of $\mathbb{Q}_p$.
Then $\mathrm{IF}_{\rho}^{G_H}(E)$ is a singleton, i.e., all representations $\rho': G_H \to \mathrm{GL}_2(\mathcal{O}_E)$, with $\rho'\otimes_{\mathcal{O}_E} E \cong\rho$, are 
$\mathrm{GL}_2(\mathcal{O}_E)$-conjugates.
\end{Theorem}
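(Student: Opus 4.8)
The plan is to reduce the computation of $\mathrm{IF}_{\rho}^{G_H}(E)$ to a count of maximal orders via Theorem \ref{main teo 0b}, and then to feed this count into Corollary \ref{main teo 1} at $p=2$ and into an elementary maximality argument at odd $p$. The first step I would carry out is the algebraic reduction. The $24$ elements of $G_H$ are exactly the units of the Hurwitz order, and they span $\mathfrak{D}$ over $\mathbb{Z}$; hence $\mathcal{O}_E[\rho(G_H)]=\mathfrak{D}_{\mathcal{O}_E}:=\mathfrak{D}\otimes_{\mathbb{Z}}\mathcal{O}_E$ for every $E$, and a maximal order of $\mathfrak{A}_E\cong\mathbb{M}_2(E)$ contains $\rho(G_H)$ if and only if it contains $\mathfrak{D}_{\mathcal{O}_E}$. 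Since $\mathfrak{D}$ spans $\mathfrak{A}$, the image of $\rho$ spans $\mathbb{M}_2(E)$, so $\rho$ is absolutely irreducible and its centralizer $\mathfrak{L}$ reduces to the scalars. By the final assertion of Theorem \ref{main teo 0b}, this identifies $\mathrm{IF}_{\rho}^{G_H}(E)$ with the set of maximal orders of $\mathfrak{A}_E$ containing $\mathfrak{D}_{\mathcal{O}_E}$, and it remains only to count these in each case.

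For $p=2$, the algebra $\mathfrak{A}_{\mathbb{Q}_2}$ is the quaternion division algebra over $\mathbb{Q}_2$ (the Hamilton algebra ramifies exactly at $2$ and $\infty$), and $\mathfrak{D}_{\mathbb{Z}_2}$ is its unique maximal order, so Corollary \ref{main teo 1} applies with $K=\mathbb{Q}_2$. The key choice is $\Delta=-3$: since $-3\equiv 5\pmod 8$ one has $\mathbb{Q}_2(\sqrt{-3})=\mathbb{Q}_2(\zeta_3)$, the unramified quadratic extension of $\mathbb{Q}_2$, so $-3$ is an unramified unit in the sense of Corollary \ref{main teo 1}. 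The hypothesis that $E$ is Galois over $\mathbb{Q}_2$ and contains a quadratic subextension makes $[E:\mathbb{Q}_2]$ even, hence $E$ splits $\mathfrak{A}_{\mathbb{Q}_2}$ and $\rho$ is defined over $E$. If $\sqrt{-3}\in E$, then $E$ plays the role of $E_1$ and part (2) of Corollary \ref{main teo 1} yields exactly $e(E/\mathbb{Q}_2)+1$ classes; if $\sqrt{-3}\notin E$, then $E$ contains a quadratic extension but not $\sqrt{\Delta}$, so it plays the role of $E_2$ and part (1) gives a single class. This reproduces both clauses of the stated dichotomy.

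For $p\neq 2$ the situation is structurally simpler: $\mathfrak{A}_{\mathbb{Q}_p}\cong\mathbb{M}_2(\mathbb{Q}_p)$ is split, because the reduced discriminant $2$ of $\mathfrak{D}$ is a $p$-adic unit, whence $\mathfrak{D}_{\mathbb{Z}_p}$ is a maximal order in $\mathbb{M}_2(\mathbb{Q}_p)$ and is conjugate to $\mathbb{M}_2(\mathbb{Z}_p)$. Base change along any algebraic $E/\mathbb{Q}_p$ then gives $\mathfrak{D}_{\mathcal{O}_E}$ conjugate to $\mathbb{M}_2(\mathcal{O}_E)$, which is maximal in $\mathbb{M}_2(E)$ regardless of the ramification of $E$. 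Being simultaneously maximal and generated by $\rho(G_H)$, the order $\mathfrak{D}_{\mathcal{O}_E}$ is the unique maximal order containing $\rho(G_H)$, so the branch degenerates to one vertex and $\mathrm{IF}_{\rho}^{G_H}(E)$ is a singleton.

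The main obstacle is not analytic but a matter of correct identification: one must pin down that the relevant unramified unit is precisely $\Delta=-3$ (equivalently, that $\sqrt{-3}$ generates the unramified quadratic extension, which governs the residue field $\mathbb{F}_4$ of the division algebra at $2$), and then match the dichotomy $\sqrt{-3}\in E$ versus $\sqrt{-3}\notin E$ to cases (2) and (1) of Corollary \ref{main teo 1}. Once the splitting behaviour of $\mathfrak{A}$ is recognized — division at $p=2$, giving a nontrivial branch, and split at odd $p$, giving a single vertex — the remaining verifications are routine facts about local fields and base change of maximal orders.
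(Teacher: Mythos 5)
Your proposal is correct and takes essentially the same route as the paper's own proof: both arguments hinge on the maximality of the Hurwitz order $\mathbb{Z}[\mathtt{u},\mathtt{v},\mathtt{w}]$, invoke Cor.~\ref{main teo 1} (with $\Delta=-3$ as the unramified unit) for $p=2$, and for $p\neq 2$ use that the algebra splits and the localized order is already maximal, so the branch reduces to a single vertex and Prop.~\ref{prop equiv rep and conj class} (equivalently your use of Th.~\ref{main teo 0b}) gives a singleton. The details you add—the congruence $-3\equiv 5 \pmod 8$ identifying the unramified quadratic extension, the spanning of the Hurwitz order by $G_H$, and the unit reduced discriminant at odd $p$—are exactly the facts the paper leaves implicit.
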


The dicyclic group can be defined as $G_D=\langle \mathtt{r}, \mathtt{p} \rangle$, where $\mathtt{r}=
\frac{1}{2}(1+\mathtt{q})$ and $\mathtt{p}$ and $\mathtt{q}$ are the standard generators of $\mathfrak{A}'=
\big(\frac{-3,-1}{\mathbb{Q}}\big)$.
This group has order $12$ and it is isomorphic to a certain semi-direct product $C_3 \rtimes C_4$.
Its integral $p$-adic representations are described by the next result, which is proved in \S \ref{subsection int rep max orders}.

\begin{Theorem}\label{main teo 3}
Let $E$ be a Galois extension of $\mathbb{Q}_p$ containing a 
quadratic extension, and let $\rho: G_D \to \mathrm{GL}_2(E)$ 
be the representation obtained by extending scalars to $E$.
When $p=3$, the set $\mathrm{IF}_{\rho}^{G_D}(E)$ has exactly $e(E/K)+1$ elements when $\sqrt{2} \in E$, while it is a singleton when $\sqrt{2} \notin E$.
Now, assume that $p\neq 3$, and let $E$ be an algebraic extension of $\mathbb{Q}_p$. Then, $\mathrm{IF}_{\rho}^{G_D}(E)$ is again a singleton.
\end{Theorem}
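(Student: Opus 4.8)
The plan is to reduce the statement to Corollary~\ref{main teo 1} and Theorem~\ref{main teo 0b}, following the same pattern as for the Hurwitz group. Since the elements $1,\mathtt{r},\mathtt{p},\mathtt{r}\mathtt{p}\in\rho(G_D)$ form a $\mathbb{Q}$-basis of $\mathfrak{A}'=\big(\tfrac{-3,-1}{\mathbb{Q}}\big)$, the image $\rho(G_D)$ spans the algebra and $\rho$ is absolutely irreducible. By the last assertion of Theorem~\ref{main teo 0b}, $\mathrm{IF}_{\rho}^{G_D}(E)$ is then in bijection with the vertices of $\widehat{\mathcal{T}}_E$ corresponding to maximal orders of $\mathfrak{A}'_E$ that contain $\rho(G_D)$; equivalently, with the vertices of $\widehat{\mathcal{T}}_E$ lying in the branch $\mathcal{S}_E$ attached to $\mathfrak{H}=\mathcal{O}_K[G_D]$. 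Because $\mathfrak{H}$ is generated by $G_D$, an integral representation of $G_D$ is the same datum as one of $\mathfrak{H}$, so it suffices to count these vertices via Corollary~\ref{main teo 1}, once we check that $\mathfrak{H}$ is the maximal order $\mathfrak{D}$ and identify the relevant unramified unit $\Delta$.

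First I would record the arithmetic of $\mathfrak{A}'$. Writing $\mathtt{q}=\sqrt{-3}$, so that $\mathtt{r}=\tfrac12(1+\mathtt{q})$ is a primitive sixth root of unity with $\mathbb{Z}[\mathtt{r}]=\mathcal{O}_L$ for $L=\mathbb{Q}(\sqrt{-3})$, the image $\rho(G_D)$ spans the crossed-product order $\Lambda=\mathcal{O}_L\oplus\mathcal{O}_L\mathtt{p}$, where $\mathtt{p}^2=-1$ and $\mathtt{p}\,\mathtt{r}\,\mathtt{p}^{-1}=\overline{\mathtt{r}}$. A Hilbert-symbol computation gives $(-3,-1)_v=-1$ exactly for $v\in\{3,\infty\}$, so $\mathfrak{A}'$ ramifies only at $3$ and $\infty$. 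I would then verify that $\Lambda_p:=\Lambda\otimes_{\mathbb{Z}}\mathbb{Z}_p$ is the maximal order $\mathfrak{D}_p$ for every prime $p$: at $p\neq 2,3$ the algebra $L\otimes_{\mathbb{Q}}\mathbb{Q}_p$ is unramified and $\mathtt{p}^2=-1$ is a unit, so $\Lambda_p$ is Azumaya over $\mathbb{Z}_p$; at $p=3$ one checks that $\Lambda_3$ is the unique maximal order of the division algebra $\mathfrak{A}'_{\mathbb{Q}_3}$; and at $p=2$ the prime $2$ is inert in $L$ (as $-3\equiv5\bmod8$) and $\mathtt{p}^2=-1$ is again a unit, so $\Lambda_2$ is Azumaya over $\mathbb{Z}_2$. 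This last verification is the step I expect to be the main obstacle, since $2$ divides $\sharp G_D=12$ and a priori $\Lambda_2$ could fail to be maximal, which would enlarge the branch at $2$ and spoil the singleton conclusion.

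Granting $\mathfrak{H}=\mathfrak{D}$, the case $p\neq 3$ is then immediate. Here $\mathfrak{A}'_{\mathbb{Q}_p}\cong\mathbb{M}_2(\mathbb{Q}_p)$ is split and $\mathfrak{D}_p$ is conjugate to $\mathbb{M}_2(\mathbb{Z}_p)$, whence $\mathfrak{H}_E=\mathfrak{D}_p\otimes_{\mathbb{Z}_p}\mathcal{O}_E\cong\mathbb{M}_2(\mathcal{O}_E)$ is a maximal order of $\mathbb{M}_2(E)$ for every algebraic extension $E/\mathbb{Q}_p$. A maximal order is the only maximal order containing itself, so the branch $\mathcal{S}_E$ is a single vertex and $\mathrm{IF}_{\rho}^{G_D}(E)$ is a singleton; note that the ramification of $E/\mathbb{Q}_p$ is irrelevant to this argument.

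Finally, for $p=3$ I would invoke Corollary~\ref{main teo 1} with $K=\mathbb{Q}_3$, $\mathfrak{A}=\mathfrak{A}'_{\mathbb{Q}_3}$ and $\mathfrak{D}=\Lambda_3$. Since $2\equiv-1$ is a non-residue modulo $3$, the unit $2$ is a non-square in $\mathbb{Z}_3$ and $\mathbb{Q}_3(\sqrt{2})$ is the unramified quadratic extension, so one may take $\Delta=2$. A Galois extension $E/\mathbb{Q}_3$ containing a quadratic subextension has even degree and hence splits $\mathfrak{A}'$, so Corollary~\ref{main teo 1} applies and its dichotomy reads off directly: if $\sqrt{2}\in E$ then $E$ is of type $E_1$ and part~(2) gives exactly $e(E/\mathbb{Q}_3)+1$ conjugacy classes, whereas if $\sqrt{2}\notin E$ then $E$ is of type $E_2$ and part~(1) gives a single class. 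Transporting these counts through Theorem~\ref{main teo 0b} yields the asserted cardinalities of $\mathrm{IF}_{\rho}^{G_D}(E)$, completing the argument.
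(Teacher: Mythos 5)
Your proposal is correct and takes essentially the same route as the paper: both arguments rest on the maximality of the order $\mathbb{Z}[\mathtt{r},\mathtt{p}]$ (which the paper simply cites from \cite[11.5.11 (ii)]{Voight}, whereas you re-derive it by local Azumaya/discriminant checks at $p=2,3$ and elsewhere), and then conclude via Cor.~\ref{main teo 1} with $\Delta=2$ for $p=3$, and via the single-vertex branch of a maximal order together with the representation--maximal-order correspondence for $p\neq 3$. The only cosmetic difference is that you phrase that correspondence through Th.~\ref{main teo 0b} while the paper invokes Prop.~\ref{prop equiv rep and conj class} directly; these are interchangeable here.
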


In order to exhibit the 
utility of twisted forms of Bruhat-Tits 
trees as a tool to compute local integral representations, in \S 
\ref{section rep ham} we study integral $2$-adic representation 
of $Q_8$ over quadratic extension of $\mathbb{Q}_2$. 
Note that they are all contained in 
$\Omega=\mathbb{Q}_2(\sqrt{-1},\sqrt{-3},\sqrt{2})$.
The faithful two dimensional representation of $Q_8$ is again 
absolutely irreducible.

\begin{Theorem}\label{main teo 4}
The group $Q_8$ has $26$ integral representations defined over 
the field $\Omega$ given above.
These representations are defined over intermediate fields 
$\Omega \supset L \supset \mathbb{Q}_2$ as follows 
(see Table \ref{table1} in \S \ref{section rep ham} 
for more details):
\begin{itemize}
    \item $4$ are defined over $L=\mathbb{Q}_2(\sqrt{\alpha})$, for $\alpha \neq -3$, while $2$ are defined over $L=\mathbb{Q}_2(\sqrt{-3})$, and
    \item $10$ are defined over $L=\mathbb{Q}_2(\sqrt{\alpha}, \sqrt{\beta})$, when $\sqrt{-3}\notin L$, while $6$ are defined over any other quartic subextension $L$.
\end{itemize}
\end{Theorem}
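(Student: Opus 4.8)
The plan is to reduce the statement to a single vertex count in one branch, and then to read off, field by field, how many of those vertices survive. Write $\mathfrak{H}=\mathbb{Z}_2[Q_8]$ for the (Lipschitz) order spanned by $Q_8$ inside $\mathfrak{A}=\left(\frac{-1,-1}{\mathbb{Q}_2}\right)$, let $\widehat{\mathcal{T}}$ be the twisted tree attached to $\mathfrak{A}$ over $L=\Omega$, and set $\mathcal{G}=\mathrm{Gal}(\Omega/\mathbb{Q}_2)$. Since the faithful two-dimensional representation of $Q_8$ is absolutely irreducible, Th.~\ref{main teo 0b} identifies $\mathrm{IF}_{\rho}^{Q_8}(E)$ with the set of vertices of $\widehat{\mathcal{T}}_E$ lying in the branch $\mathcal{S}$ of $\mathfrak{H}$ provided by Th.~\ref{main teo 0}. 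Hence it suffices to compute $\mathcal{S}$ once and then count $\sharp\big(V(\widehat{\mathcal{T}}_E)\cap\mathcal{S}\big)$ for each intermediate field $E$. First I would record the arithmetic of $\Omega/\mathbb{Q}_2$: it is Galois with $\mathcal{G}\cong(\mathbb{Z}/2\mathbb{Z})^3$, so it has exactly seven quadratic and seven quartic subfields; its residue degree is $f=2$ and its ramification index is $e=4$, with maximal unramified subextension $\mathbb{Q}_2(\sqrt{-3})=\mathbb{Q}_2(\sqrt{5})$. Thus $\mathbb{Q}_2(\sqrt{-3})$ is the unique unramified quadratic subfield, the other six quadratic subfields are ramified, and among the quartic subfields exactly three contain $\sqrt{-3}$ while the remaining four are totally ramified. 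I would then compute $e(\Omega/E)$ in each case, obtaining $4$ for $E=\mathbb{Q}_2(\sqrt{-3})$, $2$ for the ramified quadratic subfields and for the quartic subfields containing $\sqrt{-3}$, and $1$ for the totally ramified quartic subfields.

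Next I would determine $\mathcal{S}$ explicitly. The key input is that $\mathfrak{H}$ is not maximal: it lies in the unique maximal order $\mathfrak{D}$ of $\mathfrak{A}$ with $[\mathfrak{D}:\mathfrak{H}]=2$, since $\mathfrak{D}=\mathfrak{H}+\mathbb{Z}_2\,\mathtt{w}$ for $\mathtt{w}=\tfrac12(-1+\mathtt{u}+\mathtt{v}+\mathtt{u}\mathtt{v})$. The branch of $\mathfrak{D}$ is governed by Cor.~\ref{main teo 1} with $\Delta=-3$: over a field containing $\sqrt{-3}$ it is a geodesic segment, so over $\Omega$ it is a path $P$ of length $e(\Omega/\mathbb{Q}_2)=4$, i.e.\ five vertices, whose two endpoints are the vertices defined over $\mathbb{Q}_2(\sqrt{-3})$. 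This recovers the value $e_1+1=2$ of Cor.~\ref{main teo 1} and the count $e(\Omega/\mathbb{Q}_2)+1=5$ of Th.~\ref{main teo 2} for the Hurwitz group, a useful consistency check. Because $\mathfrak{H}\subseteq\mathfrak{D}$ we have $P\subseteq\mathcal{S}$, and the remaining vertices of $\mathcal{S}$ are exactly the maximal orders containing $\mathfrak{H}_E$ but not $\mathfrak{D}_E$. I would locate these by the local analysis of branches from \S\ref{branches and Int rep} and \cite{ArenasBranches}: at a vertex $v$ one tests whether the images of $\mathtt{u},\mathtt{v},\mathtt{u}\mathtt{v}$ lie in the maximal order $\mathfrak{M}_v$, which reduces to a congruence condition modulo the uniformizer and selects the extra neighbours of $P$ in the $5$-regular tree $\widehat{\mathcal{T}}$ (whose residue field is $\mathbb{F}_4$). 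The outcome should be that $\mathcal{S}$ is the spine $P$ decorated by these extra vertices, with $26$ vertices in total, giving $\sharp\,\mathrm{IF}_{\rho}^{Q_8}(\Omega)=26$.

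Finally I would count, for each intermediate $E$, the vertices of $\mathcal{S}$ that are vertices of $\widehat{\mathcal{T}}_E$. By Th.~\ref{main teo 0}(1) the copy of $\widehat{\mathcal{T}}_E$ inside $\widehat{\mathcal{T}}$ is the $\mathrm{Gal}(\Omega/E)$-fixed subspace, and its vertices are those $\Omega$-vertices sitting at positions that are multiples of $e(\Omega/E)$ along $P$ and its decorations; equivalently, each edge of $\widehat{\mathcal{T}}_E$ unfolds into a path of length $e(\Omega/E)$ in $\widehat{\mathcal{T}}$. Combining the ramification data of the first step with the shape of $\mathcal{S}$ from the second, the number of $\mathcal{S}$-vertices visible over $E$ comes out to $2$ for $E=\mathbb{Q}_2(\sqrt{-3})$, $4$ for each ramified quadratic subfield, $6$ for each quartic subfield containing $\sqrt{-3}$, and $10$ for each totally ramified quartic subfield, which is the assertion of the theorem and is recorded field by field in Table~\ref{table1}. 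The main obstacle is the explicit determination of $\mathcal{S}$ in the second step, in particular how many extra vertices the index-two gap $\mathfrak{D}/\mathfrak{H}$ contributes and at which vertices of $P$ they attach, together with the bookkeeping of the last step: for the ramified fields one must work in the subdivision $\widehat{\mathcal{T}}_E\big(e(\Omega/E)\big)$ rather than in $\widehat{\mathcal{T}}_E$ directly, so that a branch vertex counts for $E$ only when it is simultaneously $\mathrm{Gal}(\Omega/E)$-fixed and sits at the correct subdivision level.
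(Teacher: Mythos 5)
Your reduction (via Th.~\ref{main teo 0b}/Prop.~\ref{prop equiv rep and conj class}) and your arithmetic of $\Omega/\mathbb{Q}_2$ agree with the paper, but the heart of the theorem is missing: you never actually determine the branch $\mathcal{S}_\Omega(\mathbf{u},\mathbf{v})$, you only write that ``the outcome should be\dots $26$ vertices in total,'' which assumes the count to be proven. The paper's proof of this step is Lemma~\ref{lemma S(u,v) is a ball}: working over a field containing $\sqrt{-1}$ and $\sqrt{-3}$, it computes $\mathcal{S}_L(\mathbf{u})$ and $\mathcal{S}_L(\mathbf{v})$ as tubular neighborhoods $\mathcal{P}_L(z_1,z_2)^{[\nu(2)]}$, $\mathcal{P}_L(z_3,z_4)^{[\nu(2)]}$ of maximal paths whose visual limits are the fixed points of the associated Moebius transformations (Ex.~\ref{exa47}, Lemmas~\ref{lemma contraidos} and~\ref{lemma branch and MT}), and then intersects them using the explicit values of $z_1,\dots,z_4$ and their peaks. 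The answer is not your picture of ``the spine $P$ decorated by extra vertices attached along it'': it is the metric ball of radius $\nu(2)/2$ centered at the peak $v_c=B_{z_1,-\nu(2)/2}$, a point which is the \emph{midpoint} of $P$ and is not even a vertex of $\widehat{\mathcal{T}}_E$ unless $e(E/\mathbb{Q}_2)$ is even. Over $\Omega$ this ball has $1+5+20=26$ vertices; the index-two gap $\mathfrak{D}/\mathfrak{H}$ by itself tells you nothing about this shape, so without reproducing (or replacing) this computation your first bullet count has no justification.

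The per-field bookkeeping in your last step also would not go through as stated. You identify $V(\widehat{\mathcal{T}}_E)$ inside $\widehat{\mathcal{T}}_\Omega$ with Galois-fixed vertices at suitable subdivision positions, but for the wildly ramified dyadic extensions occurring here the fixed-point set of $\mathrm{Gal}(\Omega/E)$ is in general strictly larger than $\widehat{\mathcal{T}}_E$ (Prop.~\ref{prop hairs}, Ex.~\ref{ex pelitos}); this is why the paper only uses the set-theoretic equality $\widehat{\mathcal{T}}_F=\widehat{\mathcal{T}}_{L'}^{\mathrm{Gal}(L'/F)}$ for \emph{unramified} steps $L'/F$, and otherwise identifies $\widehat{\mathcal{T}}_E$ through its invariant visual limits (Lemma~\ref{lemma embb of non-trivial forms}, Prop.~\ref{prop int rep in many fields}). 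More importantly, no positional or subdivision criterion can decide which of the $20$ outer vertices of the ball are rational over which quadratic subfield: the split recorded in Table~\ref{table1} (e.g.\ that $\mathbb{Q}_2(\sqrt{2})$ and $\mathbb{Q}_2(\sqrt{-2})$ see the same leaf while $\mathbb{Q}_2(\sqrt{-1})$ sees a different one) is obtained in the paper from the rotation $\mu(z)=\omega z$ of Lemma~\ref{lemma perm}, which permutes the three non-spine components, together with explicit congruences of uniformizers modulo $2$ in Prop.~\ref{prop rep int fields}. Your plan contains no substitute for these computations, so it can only transcribe, not derive, the counts $4$, $2$, $10$, $6$ in the statement.
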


Since the order spanned by $Q_8$ is not maximal, this
result does not follow from Theorem \ref{main teo 1}. 
Moreover, Prop. \ref{tabla} in \S \ref{section rep ham} describes 
the $\mathcal{O}_{\Omega}$-representations of $Q_8$ simultaneously 
defined over pairs of intermediate subfields of $\Omega$.

Finally, in \S 
\ref{section int rep of ham over global fields}, we  
focus on the description of the global integral 
representations of $Q_8$.
Let $k$ be a number field and denote by $h_k(2)$ the 
cardinality of the 
maximal exponent-$2$ subgroup of the ideal class group
$\mathbf{Cl}_k$ of $k$.
By using Th. \ref{main teo 4} we prove next result, 
which describes the number of 
conjugacy classes of integral 
representations of $Q_8$ whenever they exist. 

\begin{Theorem}\label{main teo 5}
Let $N \equiv a \,\, (\text{mod } 8)$ be a positive integer, 
with $a\in \lbrace 1,2,3,5,6\rbrace$.
Assume $Q_8$ has an integral representation over 
$k=\mathbb{Q}(\sqrt{-N})$.
Then:
\begin{itemize}
\item[(a)] If $a=3$, then $\mathrm{IF}^{Q_8}_{\rho}(k)$
has $2h_k(2)$ elements,
\item[(b)] If $a\neq 3$ and the class 
$[\wp_{\mathbf{2}}]\in\mathbf{Cl}_k$ of 
the only dyadic maximal ideal $\wp_{\mathbf{2}}\subseteq
\mathcal{O}_k$ is a square, then 
$\mathrm{IF}^{Q_8}_{\rho}(k)$ has $4h_k(2)$ elements, and
\item[(c)] If $a\neq 3$ and the class 
$[\wp_{\mathbf{2}}]\in\mathbf{Cl}_k$ of 
the only dyadic maximal ideal $\wp_{\mathbf{2}}\subseteq
\mathcal{O}_k$ is not a square, then $\mathrm{IF}^{Q_8}_{\rho}(k)$ 
has either $h_k(2)$ or $3h_k(2)$ elements.
\end{itemize}
\end{Theorem}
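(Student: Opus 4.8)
The plan is to reduce the global count to a local computation at the dyadic prime combined with a class-group obstruction of genus-theoretic type. Since the faithful two-dimensional representation $\rho$ of $Q_8$ is absolutely irreducible, its centralizer $\mathfrak{L}$ is the center $k$, so by Th. \ref{main teo 0b} the local forms at each place are simply the vertices of the local branch, with no further quotient. The first step is to identify $\mathrm{IF}^{Q_8}_{\rho}(k)$ with the set of isomorphism classes of $\mathcal{O}_k[Q_8]$-lattices $M$ in the representation space that are free of rank two over $\mathcal{O}_k$ and satisfy $M\otimes_{\mathcal{O}_k}k\cong\rho$: an integral representation into $\mathrm{GL}_2(\mathcal{O}_k)$ is exactly such a free lattice together with a choice of basis, and $\mathrm{GL}_2(\mathcal{O}_k)$-conjugacy is module isomorphism.

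Next I would organize the classification by places. For every odd prime the order $\mathcal{O}_k[Q_8]$ localizes to a \emph{maximal} order, because the residue characteristic does not divide $\sharp Q_8=8$ and $\mathfrak{A}=\big(\frac{-1,-1}{\mathbb{Q}}\big)$ already splits there; hence the local branch is a single vertex and the local form is unique. At the archimedean place $k$ is imaginary quadratic, hence split. At the dyadic place $\wp_{\mathbf{2}}$ the completion is a quadratic extension of $\mathbb{Q}_2$, unramified exactly when $a=3$ (where $2$ is inert) and ramified when $a\in\{1,2,5,6\}$. Feeding these into the quadratic instance of Th. \ref{main teo 4} gives two local forms when $a=3$ and four when $a\neq3$, and Prop. \ref{tabla} records the branch shape. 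Thus the \emph{genera}, i.e. the coherent choices of local type, are indexed solely by the branch vertices at $\wp_{\mathbf{2}}$.

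The heart of the argument is to count the free lattices inside a single genus. Writing a lattice in the genus of a fixed base as $M=x\cdot M^0$ for an idele $x\in\mathbb{A}_k^*$ acting by local scalars, the isomorphism classes in one genus form the double coset $k^*\backslash\mathbb{A}_k^*/\hat{\mathcal{O}}_k^*\cong\mathbf{Cl}_k$, where $\hat{\mathcal{O}}_k^*$ is the unit idele group. Because $\rho(Q_8)\subseteq\mathrm{SL}_2$, the determinant ideal satisfies $\det M=\mathfrak{b}(x)^2\det M^0$, so the Steinitz class of $M$ is the square of the class of $x$ in $\mathbf{Cl}_k$. Hence $M$ is free precisely when the image of $x$ lies in the two-torsion $\mathbf{Cl}_k[2]$, and the number of free lattices in a genus equals $h_k(2)=\sharp\,\mathbf{Cl}_k[2]$ when the determinant class of that genus is a square in $\mathbf{Cl}_k$, and $0$ otherwise. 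The determinant class of a genus differs from that of the standard free type by $[\wp_{\mathbf{2}}]^{\epsilon}$, where $\epsilon\in\{0,1\}$ is the bipartite (distance) parity of its branch vertex relative to the free vertex; so a genus contributes $h_k(2)$ exactly when that vertex has even parity or when $[\wp_{\mathbf{2}}]$ is a square.

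Assembling the pieces yields the three cases. When $a=3$ the prime $2$ is inert, so $\wp_{\mathbf{2}}=(2)$ is principal and $[\wp_{\mathbf{2}}]$ is trivially a square; both genera contribute, giving $2h_k(2)$. When $a\neq3$ and $[\wp_{\mathbf{2}}]$ is a square, all four genera contribute, giving $4h_k(2)$. When $a\neq3$ and $[\wp_{\mathbf{2}}]$ is not a square, only even-parity vertices contribute; since the only four-vertex trees are the path $P_4$ (forcing a $2+2$ parity split) and the star $K_{1,3}$ (forcing $1+3$), the stated dichotomy $h_k(2)$ versus $3h_k(2)$ shows the ramified dyadic branch must be the star $K_{1,3}$, and the count is $h_k(2)$ or $3h_k(2)$ according to whether the free type sits at its center or at one of its three leaves. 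The main obstacle I anticipate is exactly this last parity bookkeeping: one must extract from Prop. \ref{tabla} that the ramified branch is the star and pin down the position of the free vertex, the residual center-versus-leaf ambiguity being what forces the ``either/or'' in part (c). A secondary technical point is to make the genus-theoretic double coset rigorous, in particular to justify that local isomorphisms are scalars and that the genus determinant invariant is well defined modulo squares.
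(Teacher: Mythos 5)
Your proposal is correct in substance, and it reaches the count by a route that is parallel to, but organized differently from, the paper's. The paper counts global \emph{maximal orders} containing $\rho(Q_8)$: by Cor.~\ref{coro rep Q8 over p neq 2} and Prop.~\ref{prop int rep in many fields} these are determined by a choice among the $2$ (unramified, $a=3$) or $4$ (ramified, $a\neq 3$) dyadic branch vertices; it then decides which of those orders are conjugate to $\mathbb{M}_2(\mathcal{O}_k)$ via the distance map $\boldsymbol{\rho}$ of \cite{ArenasBranches} (an Artin-symbol invariant under which $\mathbf{2}$-neighboring orders are isomorphic exactly when $[\wp_{\mathbf{2}}]$ is a square), and finally it cites \cite[Th.~7]{ArenasAguiloSaavedra} to get $h_k(2)$ conjugacy classes per such order. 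You instead work on the lattice side: genera of $\mathcal{O}_k[Q_8]$-lattices indexed by the same dyadic vertices, with the square-class criterion expressed through Steinitz classes ($I\cdot M^0\cong I\oplus I$ has Steinitz class $[I]^2$, so a genus contributes exactly $h_k(2)$ free classes when its determinant class is a square, and $0$ otherwise). This is the mirror image of the paper's argument — an order is conjugate to $\mathbb{M}_2(\mathcal{O}_k)$ precisely when the genus of its stable lattices contains a free one — and what your version buys is self-containedness: both the $h_k(2)$ multiplicity and the $[\wp_{\mathbf{2}}]$-parity criterion, which the paper imports as black boxes from earlier work, fall out of elementary Steinitz-class bookkeeping. (One small repair: the relation $\det M=\mathfrak{b}(x)^2\det M^0$ has nothing to do with $\rho(Q_8)\subseteq\mathrm{SL}_2$; it holds because genus-mates differ by \emph{scalar} idele multiplication, which is exactly absolute irreducibility of $\rho$.)

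One step, as written, runs backwards and must be fixed. In case (c) you assert that ``the stated dichotomy $h_k(2)$ versus $3h_k(2)$ shows the ramified dyadic branch must be the star $K_{1,3}$'': that deduces the branch geometry from the conclusion you are proving. The implication must go the other way, and it does, without appeal to Prop.~\ref{tabla}: Lemma~\ref{lemma S(u,v) is a ball} says the dyadic branch is the ball of center $v_c$ and radius $\nu(2)/2$; over a ramified quadratic extension $F$ of $\mathbb{Q}_2$ the quantity $\nu(2)/2$ is one edge of $\widehat{\mathcal{T}}_F$ and the residue field is $\mathbb{F}_2$, so the tree is $3$-regular and the branch is $v_c$ together with its three neighbors, i.e.\ the star $K_{1,3}$ (this is also what Prop.~\ref{prop rep int fields}, Fig.~\ref{figura Q8 ghost}(B) and Table~\ref{table1} record). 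With the star established independently, your parity count gives $1\cdot h_k(2)$ or $3\cdot h_k(2)$ according to whether the free vertex sits at the center or at a leaf; and, as in the paper (cf.\ Examples~\ref{ex 3hk} and~\ref{ex hk}, where both cases occur), this last ambiguity is genuinely undecided without exhibiting an explicit representation, which is precisely why the statement reads ``either/or''.
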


Both cases in Th. \ref{main teo 5}(c) naturally appear in 
basic cases according to examples \ref{ex 3hk} and \ref{ex hk}.
Thus far, we have no way of distinguish one from 
the other without explicitly writing down a 
representation.
Analogous results could be proven for the Hurwitz group $G_H\cong \mathrm{SL}_2(\mathbb{F}_3)$ and the dicyclic group 
$G_D \cong C_2 \rtimes C_4$ by using the machinery developed in \S \ref{section int rep of ham 
over global fields} and previously extending the results of \S \ref{section rep ham} to the involved groups.

As for a generalization in a different direction,
we expect that the tools developed here inspire future 
studies on the local integral representations of other algebraic groups. Indeed, in the language of scheme theory, the idea is as follows:
Let $K$ be a local field as above, let $G$ be a 
finite group and let $\mathbb{G}$ be a reductive linear 
algebraic $K$-group.
Assume that $\mathbb{G}$ has an $\mathcal{O}_K$-model 
$\mathbb{G}^{\mathrm{int}}$, i.e., a $\mathcal{O}_K$-group 
scheme satisfying $\mathbb{G}^{\mathrm{int}} 
\otimes_{\mathcal{O}_K} K = \mathbb{G}$, or equivalently 
satisfying $\mathbb{G}^{\mathrm{int}}(A_K)= \mathbb{G}(A_K)$, 
for any $K$-algebra $A_K$, where $\mathbb{G}(A_K)$ denotes the (abstract) group of $A_K$-points of $\mathbb{G}$.
Then, the group 
$\mathbb{G}^{\mathrm{int}}(\mathcal{O}_K)$ of 
$\mathcal{O}_K$-points of $\mathbb{G}^{\mathrm{int}}$ 
is a compact subgroup of $\mathbb{G}(K)$ and moreover it is a maximal compact subgroup whenever $\mathbb{G}$ is semi-simple according to \cite[Prop. 8.2.1]{BT1} or \cite[Prop. 2.3.5]{loise2017}.
Let us consider a group homomorphisms 
$\rho: G \to \mathbb{G}(K)$.
The results described in \S \ref{branches and Int rep} can be 
rephrased by saying that the $\mathcal{O}_K$-conjugacy classes 
of group homomorphisms of the form 
$\rho': G \to \mathbb{G}^{\mathrm{int}}(\mathcal{O}_K)$ 
such that $\rho' \otimes_{\mathcal{O}_K} K \cong \rho$ can be 
indexed by suitable subcomplexes of the Bruhat-Tits building 
$\mathcal{X}(\mathbb{G},K)$ of $\mathbb{G}$ at $K$.
This can be easily extended to closely related
classical matrix groups such as $\mathrm{SL}_n$ or $\mathrm{PGL}_n$. Moreover, we expect that this statement also holds for other classical algebraic groups such as $\mathrm{Sp}_{2n}$, $\mathrm{SO}_n$ or $\mathrm{SU}_n$, by interpreting the vertex set of the corresponding Bruhat-Tits buildings in terms of lattices in a suitable symplectic, quadratic, or hermitian space.
See \cite{BT3,BT4} for details.
Moreover, we expect that the theory of twisted forms of the 
Bruhat-Tits tree of $\mathrm{SL}_2(K)$ can be eventually 
extended to the Bruhat-Tits building of 
$\mathrm{SL}_{n}(K)$, 
allowing the study of integral 
representations of finite subgroups of the
group of units of a (non-split) higher-rank central simple 
algebra.

\section{Integral representations, orders and trees}\label{branches and Int rep}

Let $G$ be a (finite) group, 
and let $\mathtt{K}$ be a local or global field.
Let $\mathcal{O}_{\mathtt{K}}$ be the ring of integers of $\mathtt{K}$.
Let $\mathrm{Rep}_{n}^G(\mathtt{K})$ be the set of 
$\mathrm{GL}_n(\mathtt{K})$-conjugacy 
classes of representations 
$\rho: G \to \mathrm{GL}_n(\mathtt{K})$,
which we assume is not empty. For $\rho \in \mathrm{Rep}_{n}^G(\mathtt{K})$, we keep the notation
$\mathrm{IF}_{\rho}^{G}(\mathtt{K})$,
introduced in \S \ref{section intro}, for its set of integral forms.
Let $\mathbb{O}$ be the $\mathrm{GL}_n(\mathtt{K})$-conjugacy class
of the subring $\mathbb{M}_n(\mathcal{O}_\mathtt{K})\subseteq
\mathbb{M}_n(\mathtt{K})$.
Assuming that $\mathtt{K}=K$ is a local field, $\mathbb{O}$ is exactly the set 
of maximal orders in $\mathbb{M}_n(K)$, i.e.,
maximal $\mathcal{O}_K$-lattices of $\mathbb{M}_n(K)$ 
that are rings 
under the multiplication law of $\mathbb{M}_n(K)$.
Next result describes 
$\mathrm{IF}_{\rho}^G(K)$ in terms 
of suitable subsets of $\mathbb{O}$.

\begin{Prop}\label{prop equiv rep and conj class}
Assume that $K$ is a local field.
There exists a bijection between $\mathrm{IF}_{\rho}^G(K)$
and the set of $\mathfrak{L}^*$-conjugacy classes of maximal orders 
$\mathfrak{D} \in \mathbb{O}$ containing $\rho(G)$, 
where $\mathfrak{L}$ is the centralizer of $\rho$. 
In particular, if $\rho$ is absolutely irreducible,
then $\mathrm{IF}_{\rho}^G(K)$ is in bijection with the set of
maximal orders $\mathfrak{D} \in \mathbb{O}$ containing $\rho(G)$.
\end{Prop}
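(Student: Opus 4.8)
The plan is to exhibit both sides as orbit sets of a single, concretely described object and then match them. The basic dictionary is that an integral form of $\rho$ is the same datum as a maximal order containing $\rho(G)$, up to the appropriate equivalence. Concretely, write $\Lambda=\mathbb{M}_n(\mathcal{O}_K)$, and recall two standard facts: every maximal order in $\mathbb{M}_n(K)$ is a $\mathrm{GL}_n(K)$-conjugate $g\Lambda g^{-1}$ of $\Lambda$, and the stabilizer of $\Lambda$ for the conjugation action is its normalizer $N=K^*\,\mathrm{GL}_n(\mathcal{O}_K)$ (an element $g$ with $g\Lambda g^{-1}=\Lambda$ preserves $\mathrm{End}(\mathcal{O}_K^n)$, forcing $g\mathcal{O}_K^n=\lambda\mathcal{O}_K^n$ for some $\lambda\in K^*$). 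Let $\mathcal{M}$ denote the set of maximal orders $\mathfrak{D}\in\mathbb{O}$ with $\rho(G)\subseteq\mathfrak{D}$. Since $\mathfrak{L}^*$ centralizes $\rho(G)$, conjugation by $\mathfrak{L}^*$ preserves the condition $\rho(G)\subseteq\mathfrak{D}$, so $\mathfrak{L}^*$ acts on $\mathcal{M}$; I will construct a bijection $\mathcal{M}/\mathfrak{L}^*\xrightarrow{\sim}\mathrm{IF}_\rho^G(K)$.

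First I would define the map $\Phi$. Given $\mathfrak{D}\in\mathcal{M}$, choose $g\in\mathrm{GL}_n(K)$ with $\mathfrak{D}=g\Lambda g^{-1}$; then $\rho_{\mathfrak{D}}:=g^{-1}\rho g$ has image in $\Lambda$, and that image consists of units of $\Lambda$ because each $\rho(h)$ has finite order, so its inverse is a power of itself and $\det\rho_{\mathfrak{D}}(h)\in\mathcal{O}_K^*$, whence $\rho_{\mathfrak{D}}(G)\subseteq\mathrm{GL}_n(\mathcal{O}_K)$. This is an integral form of $\rho$, and I set $\Phi(\mathfrak{D})=[\rho_{\mathfrak{D}}]$. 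Well-definedness in $g$ uses the normalizer description: if $g\Lambda g^{-1}=g'\Lambda g'^{-1}$ then $g'^{-1}g=\lambda u$ with $\lambda\in K^*$ scalar and $u\in\mathrm{GL}_n(\mathcal{O}_K)$, and since $\lambda$ is central one gets $g^{-1}\rho g=u^{-1}(g'^{-1}\rho g')u$, so the two integral forms are $\mathrm{GL}_n(\mathcal{O}_K)$-conjugate. Invariance under $\mathfrak{L}^*$ is immediate: replacing $\mathfrak{D}$ by $\ell\mathfrak{D}\ell^{-1}$ one may take $g\mapsto\ell g$, and $\rho_{\ell\mathfrak{D}\ell^{-1}}=g^{-1}\ell^{-1}\rho\ell g=g^{-1}\rho g=\rho_{\mathfrak{D}}$ because $\ell$ centralizes $\rho$. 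Thus $\Phi$ descends to $\mathcal{M}/\mathfrak{L}^*$.

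Surjectivity is straightforward: an integral form $\rho'$ satisfies $\rho=g\rho'g^{-1}$ for some $g$, and $\rho'(G)\subseteq\Lambda$ forces $\rho(G)\subseteq g\Lambda g^{-1}=:\mathfrak{D}\in\mathcal{M}$ with $\Phi(\mathfrak{D})=[\rho']$. The delicate step, which I expect to be the main obstacle, is injectivity: I must show that a $\mathrm{GL}_n(\mathcal{O}_K)$-conjugacy between $\rho_{\mathfrak{D}_1}$ and $\rho_{\mathfrak{D}_2}$ forces $\mathfrak{D}_1$ and $\mathfrak{D}_2$ into one $\mathfrak{L}^*$-orbit. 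Writing $\mathfrak{D}_i=g_i\Lambda g_i^{-1}$ and $u^{-1}(g_1^{-1}\rho g_1)u=g_2^{-1}\rho g_2$ with $u\in\mathrm{GL}_n(\mathcal{O}_K)$, the element $v:=g_1 u g_2^{-1}$ satisfies $v^{-1}\rho v=\rho$, i.e.\ $v\in\mathfrak{L}^*$, and then $v\mathfrak{D}_2 v^{-1}=g_1 u\Lambda u^{-1}g_1^{-1}=g_1\Lambda g_1^{-1}=\mathfrak{D}_1$, using $u\Lambda u^{-1}=\Lambda$. This is exactly the bookkeeping tying together the normalizer $N$, the commutant $\mathfrak{L}^*$, and the conjugating element, and care is needed to track on which side each group acts.

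Finally, for the absolutely irreducible case I would invoke Schur's lemma: absolute irreducibility gives $\mathrm{End}_{K[G]}(K^n)=K$, so $\mathfrak{L}=K\cdot\mathrm{id}$ and $\mathfrak{L}^*=K^*$ acts trivially on every maximal order by conjugation. Hence the $\mathfrak{L}^*$-orbits in $\mathcal{M}$ are singletons, and $\Phi$ becomes a bijection between $\mathrm{IF}_\rho^G(K)$ and the set of maximal orders $\mathfrak{D}\in\mathbb{O}$ containing $\rho(G)$, as claimed.
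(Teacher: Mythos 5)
Your proposal is correct and takes essentially the same approach as the paper: the paper pivots through $\mathrm{GL}_n(K)$-conjugacy classes of pairs $(\mathfrak{D},\rho')$ and your direct map $\Phi$ is just the composite of its two bijections. Both arguments rest on the same three ingredients --- transitivity of $\mathrm{GL}_n(K)$ on maximal orders, the normalizer computation $N\big(\mathbb{M}_n(\mathcal{O}_K)\big)=K^*\mathrm{GL}_n(\mathcal{O}_K)$ for well-definedness, and the observation that any matrix conjugating $\rho$ to itself lies in $\mathfrak{L}^*$ for injectivity.
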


\begin{proof}
We claim that each quotient is in bijection with the set of
$\mathrm{GL}_n(K)$-conjugacy classes of pairs $(\mathfrak{D},\rho')$
where $\mathfrak{D}\in\mathbb{O}$, and $\rho'\in[\rho]$ satisfies
$\rho'(G)\subseteq\mathfrak{D}$. 

On one hand, since $\mathrm{GL}_n(K)$ acts transitively on
the set of maximal orders, every such pair is in the orbit of a pair
of the form $(\mathfrak{D}_0,\rho'')$ with $\mathfrak{D}_0=
\mathbb{M}_n(\mathcal{O}_K)$ and 
$\rho''\in \mathrm{IF}_{\rho}^G(K)$.
Furthermore, two of the latter pairs, say $(\mathfrak{D}_0,\rho'')$
and $(\mathfrak{D}_0,\rho''')$, are conjugate precisely when
$\rho''$ and $\rho'''$ are conjugate by a matrix leaving
$\mathfrak{D}_0$ invariant. The normalizer of $\mathfrak{D}_0$
in $\mathrm{GL}_n(K)$ is $K^*\mathfrak{D}_0^*$ with scalar
matrices acting trivially, whence we conclude that
$\rho''\mapsto (\mathfrak{D}_0,\rho'')$ defines a bijection at class
level, as claimed.

On the other hand, since, by definition, $\mathrm{GL}_n(K)$ acts 
transitively on the conjugacy class $[\rho]$, every pair
of the form $(\mathfrak{D},\rho')$ has a conjugate of the form
$(\mathfrak{D}',\rho)$, with $\rho(G)\subseteq\mathfrak{D}'$.
Again, if two of these pairs are conjugate, the matrix realizing
this conjugation must leave $\rho$ invariant, so the result follows
as before.
\end{proof}

It follows from \cite[\S 6.5]{AbramenkoBrown} that $\mathbb{O}$ is
in bijection with the vertex set of a certain CW-complex called the 
Bruhat-Tits building of $\mathrm{SL}_n$.
When $n=2$, this complex is indeed a tree, called the Bruhat-Tits 
tree of $\mathrm{SL}_2(K)$. Here we give a construction
of the latter, in terms of the topology of $K$.
See~\cite[Chap. II, \S~1]{SerreTrees} or \cite{BT1} 
for details on the original definition.
Note that $K$ is a complete field with respect to the 
absolute value induced by a discrete 
valuation map $\nu_K: K \to \mathbb{R} \cup \lbrace 
\infty \rbrace$. In general, we assume that 
$\nu_K(K^*)=\mathbb{Z}\lambda$ for some real number
$\lambda$ that we choose later so that valuations can be 
conveniently normalized through various fields. We define 
an absolute value $|x|=c^{\nu_K(x)}$ for some real constant 
$c<1$ that we need not specify in this work. Metric and topology
on $K$ are defined via the absolute value. See \cite[\S II.1]{Serre-localfields} for details.

Indeed, we set $V$ as the set of closed 
balls in $K$, and $E$ as the set of pairs of 
balls $\{B,B'\}$ where one is a proper maximal 
sub-ball of the other.
The edge $e$ is called upwards from $B$ if $B\subseteq B'$, 
and downwards from $B$ otherwise.
The graph $\mathfrak{t}_K$ thus defined is a tree according to 
\cite[\S 4]{ArenasArenasContreras}.
Moreover, this tree is isomorphic to the Bruhat-Tits 
tree associated to $\mathrm{SL}_2(K)$.
We let $\mathcal{T}_K=(\mathfrak{t}_K,T_K)$ be
the associated topological graph. 

Fix a uniformizing parameter $\pi=\pi_K$ of $K$.
We denote by $B_{a,r}$ the closed ball in $K$ whose center is $a$ 
and whose radius is $|\pi_K^r|$. 
It is straightforward that any equivalence class of rays 
in $\mathfrak{t}_K$
has a representative $\mathfrak{r}$ satisfying one 
of the following conditions:
\begin{itemize}
\item $V(\mathfrak{r})=\left\lbrace B_{a,n} : 
n \in \mathbb{Z}_{\geq 0}  
\right\rbrace$, for certain $a \in K$, or
\item $V(\mathfrak{r})=\left\lbrace B_{0,-n} : 
n \in \mathbb{Z}_{\geq 0}  
\right\rbrace$.
\end{itemize}
In the first case, the visual limit of $\mathfrak{r}$ can be 
identified with $a \in K$, while, in the second, we identify 
it with the point at infinity $\infty$.
The same holds for the space $T_K$ or the topological 
graph $\mathcal{T}_K=(\mathfrak{t}_K,T_K)$.
This brief remark proves the following result:

\begin{Lemma}\label{lemma visual limits in tk}
The set of visual limits $\partial_{\infty}(\mathcal{T}_K)$ is in
bijection with the set of $K$-points $\mathbb{P}^1(K)$ of the 
projective line $\mathbb{P}^1$. \qed
\end{Lemma}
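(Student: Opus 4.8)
The plan is to make rigorous the informal computation that precedes the statement. The goal is a bijection $\partial_\infty(\mathcal{T}_K)\to\mathbb{P}^1(K)$, where the visual limits are equivalence classes of rays in the tree $\mathfrak{t}_K$ whose vertices are closed balls in $K$. First I would invoke the normal-form observation already stated in the excerpt: every equivalence class of rays has a representative $\mathfrak{r}$ whose vertex set is either $\{B_{a,n}:n\in\mathbb{Z}_{\geq0}\}$ for some $a\in K$ (a descending chain of balls shrinking toward $a$) or $\{B_{0,-n}:n\in\mathbb{Z}_{\geq0}\}$ (an ascending chain of balls growing without bound). This already produces a surjective \emph{map} to the set $K\sqcup\{\infty\}=\mathbb{P}^1(K)$, by sending a ray of the first type to $a$ and a ray of the second type to $\infty$. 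So the essential content is to upgrade this map to a well-defined bijection.

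The first thing I would check is \textbf{well-definedness}: two equivalent rays must yield the same point. Two rays are equivalent when their intersection is again a ray, i.e. they share a common tail. For rays of the first type, $\{B_{a,n}\}$ and $\{B_{a',n}\}$ share infinitely many balls precisely when $a$ and $a'$ eventually lie in the same balls, which forces $a=a'$ since $\bigcap_n B_{a,n}=\{a\}$ (the balls shrink to a point in the complete field $K$). A ray of the first type and a ray of the second type cannot be equivalent, since one has bounded vertices and the other unbounded ones. Hence the assignment descends to a well-defined map on $\partial_\infty(\mathcal{T}_K)$.

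For \textbf{injectivity}, I would argue the contrapositive: if two normal-form rays give the same point, they are equivalent. If both are of the first type with the same $a$, they are literally the same ray (the normal form is canonical once $a$ is fixed), so equivalence is immediate; if both are of the second type, they coincide as well. For \textbf{surjectivity}, every $a\in K$ is hit by the ray $\{B_{a,n}\}_{n\geq0}$ and $\infty$ is hit by $\{B_{0,-n}\}_{n\geq0}$, which exhausts $\mathbb{P}^1(K)=K\cup\{\infty\}$. The only point requiring genuine care, and the one I expect to be the main (though mild) obstacle, is verifying that the normal form is truly canonical and exhaustive—i.e. that \emph{every} ray is equivalent to exactly one ray of the two listed types, with distinct $a$ giving inequivalent rays. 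This rests on the metric completeness of $K$ (so that a nested sequence of closed balls with radii $\to0$ has a unique limit point) and on the tree structure forcing a ray to eventually go monotonically upward or downward; both are standard for the Bruhat--Tits tree but should be stated explicitly. Since the excerpt already asserts the normal-form classification as ``straightforward,'' the proof can legitimately be short, essentially reducing to the remark that $\bigcap_n B_{a,n}=\{a\}$ identifies the two ends of the tree with $K$ and $\infty$ bijectively.
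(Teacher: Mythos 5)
Your proposal is correct and takes essentially the same route as the paper, which proves this lemma purely by the remark preceding it (the normal-form classification of rays into $\{B_{a,n}:n\in\mathbb{Z}_{\geq 0}\}$ and $\{B_{0,-n}:n\in\mathbb{Z}_{\geq 0}\}$, with the identifications $a\in K$ and $\infty$) and marks it \qed. You simply make explicit the well-definedness, injectivity, and surjectivity checks—resting on $\bigcap_n B_{a,n}=\{a\}$—that the paper treats as immediate.
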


The group $\mathrm{GL}_2(K)$ acts classically on 
$\mathbb{P}^1(K)$, with scalar matrices acting trivially. 
Equivalently, there is a canonical action of the group 
$\mathcal{M}(K) \cong \mathrm{GL}_2(K)/ K^{*}$ of Moebius 
transformations with coefficients in $K$ on 
$\partial_{\infty}(\mathcal{T}_K)$.
Now, we show that this action extends naturally to an action of
the tree $\mathfrak{t}_K$, or the topological tree 
$\mathcal{T}_K$. In fact, if we remove the ball $B=B_{a,r}$, as a vertex, from $\mathfrak{t}_K$, 
the connected components of the remaining graph are in bijection with 
the elements of
$\mathbb{P}^1(\mathbb{K})$, where $\mathbb{K}=\mathcal{O}_K/\pi_K \mathcal{O}_K$ 
is the residue field of $K$.
More precisely, we can associate the element $\bar{b}\in\mathbb{K}$
to the connected component containing the
neighbor $B_{a+\pi_K^rb,r+1}$ of $B$
and associate $\infty$ to the connected component 
containing the smallest ball properly 
containing $B$. The visual limit of the former 
component is precisely
$B_{a+\pi_K^rb,r+1}$, while the visual limit of the 
latter component is the complement $B^c$. This gives 
us a decomposition into disjoint sets:
$$\mathbb{P}^1(K)=B^{\mathit{c}} \sqcup \coprod_{ \bar{b} 
\in \mathbb{K} } B_{a+b \pi_K^r, r+1}.$$
Every set in this decomposition is a ball or a ball complement.
Any decomposition into exactly $|\mathbb{K}|+1$ disjoint 
sets of this type corresponds to a ball.
The action by Moebius transformation permutes balls by permuting 
such decompositions. 
Likewise, neighboring balls can be characterized in 
terms of decompositions,
so this action preserves the structure of the tree.
See \cite{ArenasArenasContreras} or 
\cite{ArenasBravoExt} for details.
Alternatively, the action of a Moebius transformation 
$\mu\in\mathcal{M}(K)$
on a closed ball $B\subseteq K$ can be defined by:
\begin{enumerate}
\item[(a)] $\mu \bullet B=\mu(B)$, if the latter is a ball, while
\item[(b)] $\mu\bullet B$ is the smallest ball 
properly containing
$\mu(B^{\mathit{c}})$,
if $\mu(B^{\mathit{c}})$ is a ball.
\end{enumerate}

The group $\mathrm{GL}_2(K)$ acts on
the space $T_K$ via continuous maps. The action
of a Moebius transformation $\mu$ on an element $x \in T_K$
is denoted by $\mu \bullet x$.
This action is simplicial, in the sense that
is induced from an action on graphs.
In the sequel, we write $\mathcal{P}_K[v,w]
\subseteq\mathcal{T}_K$ for the finite line connecting 
$v,w \in V(\mathcal{T}_K)$.
Similar conventions apply for the ray  
$\mathcal{P}_K[v,a)\subseteq\mathcal{T}_K$
and the maximal path
$\mathcal{P}_K(a,b)\subseteq\mathcal{T}_K$, 
where $a,b\in\mathbb{P}^1(K)$. 
The graph subjacent to $\mathcal{P}_K[v,w]$
is denoted $\mathfrak{p}_K[v,w]$, and similar conventions
apply to rays and maximal paths.
The subindex $K$
is omitted when confusion is unlikely.

We define the peak in a maximal path $\mathcal{P}_K(a,b)$, 
when neither $a$ nor $b$ is $\infty$, as the vertex 
$B_{a, \nu(a-b)}=B_{b, \nu(a-b)}$, which is the vertex 
$B_{c,r}$ in the maximal
path with the smallest value of $r$. In the same spirit,
we call $B_{a,r+1}$ a neighbor below $B_{a,r}$,
and $B_{a,r}$ the neighbor above $B_{a,r+1}$. 
Note that every vertex has a unique neighbor above it and $|\mathbb{K}|$ neighbors below it.

We say that two homothety classes of $\mathcal{O}_K$-lattices
$[\Lambda]$ and  $[\Lambda']$ in $K\times K$ are neighbors when 
there 
exist representatives 
$\Lambda_0 \in [\Lambda]$ and $\Lambda_0' \in [\Lambda']$ such 
that 
$\Lambda_0 \subset \Lambda_0'$ and $ \Lambda_0'/\Lambda_0 
\cong \mathcal{O}_K/\pi_K \mathcal{O}_K$. It is not hard to see
that the latter condition implies both
$\Lambda'_0\subseteq\pi_K\Lambda_0$ and
$ (\pi_K\Lambda_0)/\Lambda'_0 \cong \mathcal{O}_K/\pi_K \mathcal{O}_K$,
so the preceding relation is symmetric.
In particular, we can define a graph whose vertices are the homothety
classes of lattices and whose edges correspond to the pairs
of neighboring classes as defined above.
It can be proved that the graph just defined is isomorphic to the
Bruhat-Tits tree $\mathfrak{t}_K$
(cf.~\cite[\S II.1, Th. 1]{SerreTrees}).
More explicitly, we can assume that this isomorphism takes
the ball $B_{a,r}$
to the homothety class containing
the lattice  $\Lambda_{a,r}=\left\langle
\binom{a}{1}, \binom{\pi_K^r}{0}
\right\rangle $.
The group $\mathrm{GL}_2(K)$ acts on homothety classes via 
$g \cdot [\Lambda]=[g(\Lambda)]$, for any $\mathcal{O}_K$-
lattice  $\Lambda \subset K \times K$ and any 
$g \in \mathrm{GL}_2(K)$. The correspondence between balls and 
homothety classes given above translates the latter into an 
action on the Bruhat-Tits tree that coincide with the actions 
via Moebius transformation defined earlier.

An order in $\mathbb{M}_2(K)$ is a lattice that is also a subring
of this matrix algebra. We say that an order is maximal when it 
fails to be strictly contained in any other order. For example, 
for any lattice $\Lambda\subseteq K \times K$ of rank $2$,  its 
endomorphism ring $\mathrm{End}_{\mathcal{O}_K}
(\Lambda)\subseteq\mathbb{M}_2(K)$  is a maximal order. It is 
well known that all maximal orders are conjugates, and therefore
they are endomorphism rings of lattices. Furthermore, two 
lattices have the same endomorphism ring precisely when
one is a multiple of the other. We conclude that maximal 
orders are in correspondence with homothety classes of lattices, 
whence we can redefine the Bruhat-Tits tree $\mathfrak{t}_K$ in 
terms of maximal orders. Indeed,
two maximal orders $\mathfrak{D}$ and
$\mathfrak{D}'$ are neighbors if, in some basis, they have the 
form $\mathfrak{D}=\mathfrak{D}_0$ and 
$\mathfrak{D}'=\mathfrak{D}_1$,
where
$$\mathfrak{D}_r=
\sbmattrix{\mathcal{O}_K}{\pi_K^r\mathcal{O}_K}
{\pi_K^{-r}\mathcal{O}_K}{\mathcal{O}_K}
.$$
More generally, for every pair of maximal orders 
$\mathfrak{D}$ and
$\mathfrak{D}'$ there exists a basis with respect to
which they have 
the form $\mathfrak{D}=\mathfrak{D}_0$ and 
$\mathfrak{D}'=\mathfrak{D}_n$,
where $n$ is the graph distance $\delta(v,v')$ between the 
corresponding vertices. The maximal orders corresponding
to vertices in the line $\mathcal{P}_K[v,v']$ have, in that basis,
the form  $\mathfrak{D}_r$ for $0\leq r\leq n$.

For every element $\mathbf{q} \in \mathbb{M}_2(K)$, we denote 
by $\mathfrak{s}_K(\mathbf{q})$
the largest subgraph of $\mathfrak{t}_K$ whose vertices correspond
to maximal orders containing $\mathbf{q}$. Such graph is 
called the branch of $\mathbf{q}$, and it is a connected graph,
since every maximal order
corresponding to a vertex in $\mathcal{P}_K[v,v']$, as above,
contains $\mathfrak{D}\cap\mathfrak{D}'$.
When $\mathbf{q}$ is not contained in any maximal order, i.e., 
when $\mathbf{q}$ is not integral over $\mathcal{O}_K$, then 
the branch $\mathfrak{s}_K(\mathbf{q})$ is empty.
We write $\mathcal{S}_K(\mathbf{q})\subseteq\mathcal{T}_K$
for the associated topological graph.

The rest of this section is devoted to 
giving a description for all
branches of quaternions.
See \cite{ArenasBranches}, \cite{ArenasBravoExt} or 
\cite{ArenasBravoCar2} for more details.
Let $\mathcal{F}_0\subseteq\mathcal{T}_K$ be the topological
subtree whose vertices are the balls of radius $1$ or more.

\begin{Lemma}\label{nilbranches}\cite[Prop. 4.2]{ArenasBranches}
If the two dimensional algebra $K(\mathbf{q})$ 
contains a nilpotent element, then 
$\mathcal{S}_K(\mathbf{q})=\gamma \bullet \mathcal{F}_0 $, 
for some Moebius transformation $\gamma$. 
\end{Lemma}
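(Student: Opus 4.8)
The plan is to reduce the statement to a single standard nilpotent matrix and then compute its branch explicitly in the ball (equivalently, lattice) model, using $\GL_2(K)$-equivariance to transport the answer back.

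First I would settle the linear algebra. Since $K(\mathbf{q})=K[\mathbf{q}]$ is a two-dimensional commutative $K$-algebra generated by one element, the minimal polynomial of $\mathbf{q}$ has degree $2$; the existence of a nonzero nilpotent forces it to be $(x-\lambda)^2$ for some $\lambda\in K$, since distinct roots (in $K$ or in a quadratic extension) would give $K\times K$ or a quadratic field extension, neither of which contains nilpotents. Thus $N:=\mathbf{q}-\lambda$ satisfies $N\neq 0$ and $N^2=0$. Moreover, any maximal order containing $\mathbf{q}$ makes $\mathrm{tr}(\mathbf{q})=2\lambda$ and $\det(\mathbf{q})=\lambda^2$ integral, so (as $\car(K)\neq 2$ and $\mathcal{O}_K$ is integrally closed) $\lambda\in\mathcal{O}_K$ whenever the branch is nonempty, which is the standing situation consistent with the claimed equality. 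As every maximal order contains the scalar $\lambda\in\mathcal{O}_K$, an order contains $\mathbf{q}$ iff it contains $N$; hence $\mathcal{S}_K(\mathbf{q})=\mathcal{S}_K(N)$, and it suffices to describe the branch of a nonzero $N$ with $N^2=0$.

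Next I would normalize $N$ by conjugation. Every nonzero rank-one nilpotent is $\GL_2(K)$-conjugate to $N_0=\sbmattrix{0}{1}{0}{0}$: choosing $v\notin\ker N$, the basis $\{Nv,v\}$ puts $N$ into the form $N_0$, so $N=gN_0g^{-1}$ for some $g$. Conjugation by $g$ carries the maximal order $\mathrm{End}(\Lambda)$ to $\mathrm{End}(g\Lambda)=g\,\mathrm{End}(\Lambda)\,g^{-1}$ and matches the action of the Moebius transformation $\gamma=[g]\in\mathcal{M}(K)$ on $\mathcal{T}_K$. Since $N\Lambda\subseteq\Lambda\iff N_0(g^{-1}\Lambda)\subseteq g^{-1}\Lambda$, the branches satisfy $\mathcal{S}_K(N)=\gamma\bullet\mathcal{S}_K(N_0)$, and the whole statement collapses to the single identification $\mathcal{S}_K(N_0)=\mathcal{F}_0$. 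For this last, concrete step I use the dictionary $B_{a,r}\leftrightarrow[\Lambda_{a,r}]$ with $\Lambda_{a,r}=\langle\binom{a}{1},\binom{\pi_K^r}{0}\rangle$: the order at $B_{a,r}$ contains $N_0$ iff $N_0\Lambda_{a,r}\subseteq\Lambda_{a,r}$. As $N_0\binom{x}{y}=\binom{y}{0}$ and the second coordinates of $\Lambda_{a,r}$ fill $\mathcal{O}_K$, one gets $N_0\Lambda_{a,r}=\mathcal{O}_K\binom{1}{0}$, while $\Lambda_{a,r}\cap K\binom{1}{0}=\pi_K^r\mathcal{O}_K\binom{1}{0}$; the containment therefore holds iff $\mathcal{O}_K\subseteq\pi_K^r\mathcal{O}_K$, i.e. iff $r\le 0$. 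Thus the vertices of $\mathcal{S}_K(N_0)$ are exactly the balls of radius $\ge 1$, which is precisely the vertex set of $\mathcal{F}_0$; since both are the full subgraph on that vertex set, they coincide, and combining the three reductions yields $\mathcal{S}_K(\mathbf{q})=\gamma\bullet\mathcal{F}_0$.

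The step I expect to demand the most care is not deep but is where the essential content lies: checking that the center $a$ genuinely drops out of the containment condition (so the branch is all of $\mathcal{F}_0$ and not a thinner subtree), and matching the direction of the group action so that $\gamma=[g]$ rather than $[g^{-1}]$ appears. The integrality reduction $\lambda\in\mathcal{O}_K$ is the only point where nonemptiness of the branch is invoked, and it should be recorded explicitly, as the asserted equality presupposes a nonempty right-hand side.
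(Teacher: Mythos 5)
Your proof is correct. Note first that the paper itself contains no proof of this lemma: the statement is quoted, citation attached, from \cite[Prop.~4.2]{ArenasBranches}, so there is no internal argument to compare yours against. What you have written is the natural self-contained verification, and it uses only machinery the paper sets up in \S\ref{branches and Int rep}: the reduction from $\mathbf{q}$ to the nilpotent $N=\mathbf{q}-\lambda\mathbf{1}$ (legitimate because every maximal order contains $\mathcal{O}_K$), the normalization $N=gN_0g^{-1}$ with $N_0=\sbmattrix0100$, the compatibility of conjugation with the Moebius action under the dictionary $B_{a,r}\leftrightarrow[\Lambda_{a,r}]$, and the one-line lattice computation showing that $\mathrm{End}_{\mathcal{O}_K}(\Lambda_{a,r})$ contains $N_0$ if and only if $r\le 0$, a condition independent of the center $a$. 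Since $|\pi_K^r|=c^r$ with $c<1$, these are exactly the balls of radius at least $1$, i.e.\ the vertex set of $\mathcal{F}_0$, and both sides are the induced subgraphs on their vertex sets, so they coincide.

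Two remarks. First, the integrality caveat you flag is genuine and correctly resolved: as literally stated the lemma fails for non-integral elements such as $\mathbf{q}=\pi_K^{-1}\mathbf{1}+N_0$, whose algebra contains nilpotents but whose branch is empty. The paper's remark just before the lemma (``when $\mathbf{q}$ is not integral over $\mathcal{O}_K$, the branch is empty'') shows the statement is meant under the standing assumption that the branch is nonempty, which is precisely where your deduction $\lambda\in\mathcal{O}_K$ enters; incidentally, that deduction needs only $\lambda^2=\det\mathbf{q}\in\mathcal{O}_K$ and integral closedness, so the appeal to $\car(K)\neq 2$ there is superfluous. Second, a micro-refinement of your opening linear algebra: the cleanest characteristic-free way to see that the repeated root $\lambda$ lies in $K$ is that $K[\mathbf{q}]\cong K[x]/(m)$ is non-reduced if and only if $m$ has a square factor \emph{in} $K[x]$; your dichotomy by roots silently skips the inseparable case $m=x^2-c$ in characteristic $2$, which yields a field and hence no nilpotents. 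This is immaterial here, since the paper assumes $\car(\mathtt{K})\neq 2$ wherever quaternion algebras are in play, but it is worth phrasing the argument so that it does not depend on it.
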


Otherwise, the algebra $K(\mathbf{q})$ is an \'etale algebra, i.e., either $K(\mathbf{q})$ is isomorphic to a quadratic extension field of $K$ or to $K \times K$.
In this case,
a full description of the graph $\mathcal{S}_K(\mathbf{q})$ 
is given by the following three lemmas. 
Before stating these results, 
we need an additional definition. Let $\mathcal{C}$ be a
topological subtree 
of $\mathcal{T}_K$. A tubular neighborhood $\mathcal{C}^{[n]}$ 
is the smallest subtree of $\mathcal{T}_K$ containing all 
vertices at a distance $s \leq n$ from the vertices in $\mathcal{C}$. The first two lemmas describe the branch when 
$\mathcal{O}_K[\mathbf{q}]$ is
the maximal integral ring of the \'etale algebra $K(\mathbf{q})$.
Note that any order in $K(\mathbf{q})$ can be written as $\mathcal{O}_K[\alpha \mathbf{q}]$,
where $\mathcal{O}_K[\mathbf{q}]$ is the maximal ring of integers.
This might require to adjust the generators as illustrated in 
the examples.

\begin{Lemma}\label{splitbranches}\cite[Prop. 4.1]{ArenasBranches}
If the ring $\mathcal{O}_K[\mathbf{q}]$ is isomorphic to 
$\mathcal{O}_K\times \mathcal{O}_K$,
then $\mathcal{S}_K(\mathbf{q})=
\gamma\bullet\mathcal{P}_K(0,\infty)$, 
for some Moebius transformation $\gamma$. 
\end{Lemma}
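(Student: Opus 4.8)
The plan is to exploit two reductions before doing any explicit computation. First, since every maximal order $\mathfrak{D}$ is a ring containing the scalars $\mathcal{O}_K\cdot\mathrm{id}$, one has $\mathbf{q}\in\mathfrak{D}$ if and only if $\mathcal{O}_K[\mathbf{q}]\subseteq\mathfrak{D}$; hence the branch depends only on the order $\mathcal{O}_K[\mathbf{q}]$ and not on the chosen generator $\mathbf{q}$. Second, the branch is equivariant for the $\mathrm{GL}_2(K)$-action: because $g\cdot\mathrm{End}_{\mathcal{O}_K}(\Lambda)\cdot g^{-1}=\mathrm{End}_{\mathcal{O}_K}(g\Lambda)$, and this action matches the Moebius action on $\mathcal{T}_K$, we obtain $\mathcal{S}_K(g\mathbf{q}g^{-1})=g\bullet\mathcal{S}_K(\mathbf{q})$ for all $g\in\mathrm{GL}_2(K)$. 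Thus it suffices to bring $\mathbf{q}$ into a convenient normal form, both by conjugating and by replacing it with another generator of the same $\mathcal{O}_K$-order.

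I would then produce that normal form. The hypothesis $K(\mathbf{q})\cong K\times K$ says that the minimal polynomial of $\mathbf{q}$ splits into two distinct linear factors over $K$, so $\mathbf{q}$ is diagonalizable over $K$ with distinct eigenvalues $\lambda_1,\lambda_2$. Conjugating by a suitable $g\in\mathrm{GL}_2(K)$ I may assume $\mathbf{q}=\mathrm{diag}(\lambda_1,\lambda_2)$. Under the resulting isomorphism $K(\mathbf{q})\xrightarrow{\sim}K\times K$ the order $\mathcal{O}_K[\mathbf{q}]$ corresponds to $\mathcal{O}_K[(\lambda_1,\lambda_2)]$, and the idempotent satisfies $(1,0)=(\lambda_1-\lambda_2)^{-1}\big((\lambda_1,\lambda_2)-\lambda_2(1,1)\big)$. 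Hence $\mathcal{O}_K[\mathbf{q}]\cong\mathcal{O}_K\times\mathcal{O}_K$ forces $\lambda_1,\lambda_2\in\mathcal{O}_K$ and $\lambda_1-\lambda_2\in\mathcal{O}_K^*$, and in that case $\mathcal{O}_K[\mathbf{q}]=\mathcal{O}_K[\varepsilon]$ where $\varepsilon=\mathrm{diag}(1,0)$. By the two reductions above it is therefore enough to compute $\mathcal{S}_K(\varepsilon)$ and transport it back by the Moebius transformation $\gamma$ induced by $g^{-1}$.

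Finally I would determine $\mathcal{S}_K(\varepsilon)$ at the level of lattices. A maximal order $\mathrm{End}_{\mathcal{O}_K}(\Lambda)$ contains $\varepsilon$ precisely when $\Lambda$ is stable under $\varepsilon$, equivalently under both $\varepsilon$ and $\mathrm{id}-\varepsilon$, which forces the splitting $\Lambda=(\Lambda\cap(K\times 0))\oplus(\Lambda\cap(0\times K))=\pi_K^{i}\mathcal{O}_K\times\pi_K^{j}\mathcal{O}_K$. Up to homothety these are exactly the lattices $\Lambda_{0,r}=\pi_K^{r}\mathcal{O}_K\times\mathcal{O}_K$ with $r\in\mathbb{Z}$, that is, exactly the vertices $B_{0,r}$. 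Since $\Lambda_{0,r+1}\subset\Lambda_{0,r}$ with quotient $\mathcal{O}_K/\pi_K\mathcal{O}_K$, consecutive classes are neighbors, so the induced subgraph on these vertices is a bi-infinite geodesic whose two visual limits are $0$ (as $r\to+\infty$) and $\infty$ (as $r\to-\infty$); that is, $\mathcal{S}_K(\varepsilon)=\mathcal{P}_K(0,\infty)$. Transporting by $\gamma$ yields $\mathcal{S}_K(\mathbf{q})=\gamma\bullet\mathcal{P}_K(0,\infty)$. The only genuinely delicate points are verifying that no lattice outside the family $\pi_K^{i}\mathcal{O}_K\times\pi_K^{j}\mathcal{O}_K$ is $\varepsilon$-stable, so that the branch is exactly this line and not a larger subtree, and matching conventions so that the two ends are identified with $0$ and $\infty$ rather than interchanged; both are settled by the eigenspace decomposition above together with the ray description of visual limits recalled before Lemma \ref{lemma visual limits in tk}.
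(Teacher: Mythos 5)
Your proof is correct, and there is in fact nothing internal to compare it against: the paper never proves this lemma but imports it verbatim from \cite[Prop. 4.1]{ArenasBranches}, so your argument supplies a proof that the authors delegate to the reference. Your route---observe that the branch depends only on the order $\mathcal{O}_K[\mathbf{q}]$ and is $\mathrm{GL}_2(K)$-equivariant, reduce to the idempotent $\varepsilon=\mathrm{diag}(1,0)$, and then classify the $\varepsilon$-stable lattices via the splitting $\Lambda=\varepsilon\Lambda\oplus(\mathrm{id}-\varepsilon)\Lambda$, so that up to homothety only the classes $[\pi_K^r\mathcal{O}_K\times\mathcal{O}_K]=B_{0,r}$ survive---is the natural one, and it is exactly consonant with how the paper itself handles the lemma downstream: in the proof of Cor.~\ref{main teo 1} the authors replace $\frac12(\mathbf{j}-\mathbf{1})$ by the idempotent $\mathbf{w}$ on the grounds that only the generated order matters, and they locate the two ends of the resulting maximal path via fixed points of the associated Moebius transformation, which is the projective counterpart of your eigenspace decomposition.

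Two minor points worth tightening. First, the necessity half of your claim that the hypothesis \emph{forces} $\lambda_1-\lambda_2\in\mathcal{O}_K^*$ needs one more line: if $(p(\lambda_1),p(\lambda_2))=(1,0)$ for some $p\in\mathcal{O}_K[x]$, then $1=p(\lambda_1)-p(\lambda_2)\in(\lambda_1-\lambda_2)\mathcal{O}_K$; your displayed formula only proves sufficiency. Alternatively you can avoid the unit condition altogether: the abstract isomorphism $\mathcal{O}_K[\mathbf{q}]\cong\mathcal{O}_K\times\mathcal{O}_K$ forces $\mathcal{O}_K[\mathbf{q}]$ to contain a nontrivial idempotent of $K(\mathbf{q})$, which in diagonal form must be $\varepsilon$ or $\mathrm{id}-\varepsilon$, and this together with integrality of $\lambda_1,\lambda_2$ already yields $\mathcal{O}_K[\mathbf{q}]=\mathcal{O}_K[\varepsilon]$, which is all your last step uses. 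Second, your identification of the two ends with $0$ and $\infty$ does match the paper's normalization (the ray $\lbrace B_{0,n}\rbrace_{n\geq 0}$ has visual limit $0$ and $\lbrace B_{0,-n}\rbrace_{n\geq 0}$ has limit $\infty$), so you recover $\mathcal{P}_K(0,\infty)$ on the nose rather than up to relabeling of ends.
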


\begin{Lemma}\label{idbranches}\cite[Prop. 4.1]{ArenasBranches}
If $\mathcal{O}_K[\mathbf{q}]$ is isomorphic to $\mathcal{O}_E$, 
for some extension field $E/K$, then 
$\mathcal{S}_K(\mathbf{q})$ is a 
vertex or an edge, where the latter holds 
precisely when $E/K$ ramifies.
\end{Lemma}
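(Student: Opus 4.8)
The plan is to translate the branch into a statement about lattices stable under $\mathcal{O}_E$ and then count those up to homothety. Recall from the correspondence established above that every maximal order of $\mathbb{M}_2(K)$ is of the form $\mathrm{End}_{\mathcal{O}_K}(\Lambda)$ for an $\mathcal{O}_K$-lattice $\Lambda\subseteq K\times K$, and that two lattices give the same order exactly when they are $K^*$-homothetic. Such an order contains $\mathbf{q}$ if and only if $\mathbf{q}\Lambda\subseteq\Lambda$; since $\mathcal{O}_K[\mathbf{q}]=\mathcal{O}_E$ and any order containing $\mathbf{q}$ contains the ring it generates, this is equivalent to $\Lambda$ being stable under $\mathcal{O}_E$. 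Thus the vertices of $\mathcal{S}_K(\mathbf{q})$ are precisely the $K^*$-homothety classes of $\mathcal{O}_E$-stable lattices.

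First I would record that the embedding $K[\mathbf{q}]\cong E\hookrightarrow\mathbb{M}_2(K)=\mathrm{End}_K(K\times K)$ turns $K\times K$ into a module over the field $E$; comparing $K$-dimensions, $\dim_K(K\times K)=2=[E:K]$, shows it is a one-dimensional $E$-vector space. Fixing an $E$-basis identifies the $\mathcal{O}_E$-stable lattices with the nonzero fractional $\mathcal{O}_E$-ideals of $E$. As $\mathcal{O}_E$ is a complete discrete valuation ring with uniformizer $\pi_E$, these are exactly the lattices $\pi_E^{n}\mathcal{O}_E$ with $n\in\mathbb{Z}$.

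Next I would count homothety classes. Writing $\nu_E$ for the normalized valuation on $E$, a scalar $c\in K^*$ satisfies $\nu_E(c)=e(E/K)\,\nu_K(c)\in e(E/K)\mathbb{Z}$, so $c\cdot\pi_E^{n}\mathcal{O}_E=\pi_E^{m}\mathcal{O}_E$ forces $m\equiv n \pmod{e(E/K)}$. Hence the $K^*$-homothety classes of $\mathcal{O}_E$-stable lattices are in bijection with $\mathbb{Z}/e(E/K)\mathbb{Z}$, so $\mathcal{S}_K(\mathbf{q})$ has exactly $e(E/K)$ vertices. Since $[E:K]=2$, the ramification index is $e(E/K)=1$ when $E/K$ is unramified and $e(E/K)=2$ when $E/K$ is ramified, giving one vertex in the former case and two in the latter.

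It remains to see that in the ramified case the branch is a single edge. Here I would invoke the fact, already established when the branch was defined, that every branch is connected: a connected subgraph of the tree $\mathfrak{t}_K$ with exactly two vertices must be the edge joining them. Alternatively, one checks the neighbor relation directly, noting $\pi_E\mathcal{O}_E\subset\mathcal{O}_E$ with $\mathcal{O}_E/\pi_E\mathcal{O}_E$ isomorphic to the residue field of $E$, which coincides with $\mathbb{K}$ because the residue degree is $1$ when $E/K$ is ramified; this is exactly the condition defining neighboring lattice classes. The only delicate point in the whole argument is this bookkeeping of the homothety group --- keeping the scaling group $K^*$ (rather than $E^*$) straight and tracking the factor $e(E/K)$ --- together with the verification that the two ramified classes are genuinely adjacent; everything else follows directly from the lattice description of the tree recalled above.
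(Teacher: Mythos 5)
Your proposal is correct, but note that this paper never proves the lemma itself: it is imported verbatim from \cite[Prop.~4.1]{ArenasBranches}, so there is no in-paper argument to compare against. What you have written is a self-contained proof built entirely on the lattice model of $\mathfrak{t}_K$ that the paper recalls in \S 3 (maximal orders $=$ endomorphism rings $\mathrm{End}_{\mathcal{O}_K}(\Lambda)$, equal precisely on homothety classes, with the neighbor relation $\Lambda_0\subset\Lambda_0'$, $\Lambda_0'/\Lambda_0\cong\mathcal{O}_K/\pi_K\mathcal{O}_K$). All the steps check out: a maximal order contains $\mathbf{q}$ iff the corresponding lattice is $\mathcal{O}_K[\mathbf{q}]$-stable; viewing $K\times K$ as a one-dimensional $E$-vector space identifies such lattices with fractional $\mathcal{O}_E$-ideals $\pi_E^n\mathcal{O}_E$; and since $\nu_E(K^*)=e(E/K)\mathbb{Z}$, the $K^*$-homothety classes are indexed by $\mathbb{Z}/e(E/K)\mathbb{Z}$, giving one vertex in the unramified case and two in the ramified case. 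Your adjacency verification is also sound in both of its variants: the paper does state that branches are connected (so two vertices in a tree force an edge), and the direct computation $\mathcal{O}_E/\pi_E\mathcal{O}_E\cong\mathbb{K}$ uses correctly that the residue degree is $1$ when $E/K$ is ramified quadratic, which is exactly the paper's neighbor condition. One small point worth making explicit: the hypothesis forces $[E:K]=2$ (Cayley--Hamilton bounds $\dim_K K[\mathbf{q}]$ by $2$, and $E=K$ would make $\mathbf{q}$ scalar, with branch the whole tree), which is what licenses your dimension count $\dim_K(K\times K)=[E:K]$; the paper's surrounding discussion of \'etale algebras makes the same implicit assumption. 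The payoff of your route is that it proves the citation from first principles within the paper's own formalism, and it transparently generalizes: the same counting gives $e(E/K)$ vertices for the analogous statement about maximal orders in $\mathbb{M}_n$ containing the ring of integers of a degree-$n$ field.
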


\begin{Lemma}\label{lemma contraidos}\cite[Prop. 2.4]{ArenasBranches}
For any $\alpha \in \mathcal{O}_K$, and for any element
$\mathbf{q}\in\mathbb{M}_2(K)$ with a non-empty branch,
the identity 
$\mathcal{S}_K(\alpha \mathbf{q})=
\mathcal{S}_K(\mathbf{q})^{[\nu(\alpha)]}$
holds.
\end{Lemma}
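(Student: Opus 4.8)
The plan is to replace the two branches by a single integer-valued function on the vertices of $\mathcal{T}_K$ and to read the statement as a comparison between a sub-level set of this function and a metric neighbourhood. Fix $\mathbf{q}$ and, for an $\mathcal{O}_K$-lattice $\Lambda\subseteq K\times K$, set $\ell(\Lambda)=\min\{t\in\mathbb{Z}:\mathbf{q}\Lambda\subseteq\pi_K^{-t}\Lambda\}$. Since $\mathbf{q}(c\Lambda)=c\,\mathbf{q}\Lambda$, this depends only on the homothety class, so $\ell$ is a well-defined function $V(\mathcal{T}_K)\to\mathbb{Z}$. As the maximal order $\mathrm{End}_{\mathcal{O}_K}(\Lambda)$ contains $\mathbf{q}$ exactly when $\mathbf{q}\Lambda\subseteq\Lambda$, one has $V(\mathcal{S}_K(\mathbf{q}))=\{\ell\le 0\}$. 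Writing $\nu(\alpha)=m$ and $\alpha=u\pi_K^{m}$ with $u$ a unit, the relation $\alpha\mathbf{q}\Lambda\subseteq\Lambda$ is equivalent to $\mathbf{q}\Lambda\subseteq\pi_K^{-m}\Lambda$, so that $V(\mathcal{S}_K(\alpha\mathbf{q}))=\{\ell\le m\}$. The goal becomes the identity $\{\ell\le m\}=\{\,[\Lambda]:\delta([\Lambda],\mathcal{S}_K(\mathbf{q}))\le m\,\}$ of vertex sets, where $\delta$ is the graph metric.

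First I would record that $\ell$ is $1$-Lipschitz for $\delta$: if $[\Lambda']$ is a neighbour of $[\Lambda]$, choose representatives with $\pi_K\Lambda\subseteq\Lambda'\subseteq\Lambda$; then $\mathbf{q}\Lambda'\subseteq\mathbf{q}\Lambda\subseteq\pi_K^{-\ell(\Lambda)}\Lambda\subseteq\pi_K^{-\ell(\Lambda)-1}\Lambda'$, and symmetrically, so $|\ell(\Lambda)-\ell(\Lambda')|\le 1$; the general bound $|\ell(\Lambda)-\ell(\Lambda')|\le\delta([\Lambda],[\Lambda'])$ follows along a geodesic. This yields one inclusion at once: if $[\Lambda]$ lies within distance $m$ of a vertex $[\Lambda_0]$ of the branch, then $\ell(\Lambda)\le\ell(\Lambda_0)+m\le m$, so the neighbourhood is contained in $\{\ell\le m\}$.

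The substantial point is the reverse inclusion, and for it I would prove a descent step: if the branch is non-empty and $\ell(\Lambda)=d\ge 1$, then some neighbour $[\Lambda']$ satisfies $\ell(\Lambda')=d-1$. Normalising $\Lambda=\mathcal{O}_K\times\mathcal{O}_K$ and setting $A=\pi_K^{d}\mathbf{q}\in\mathbb{M}_2(\mathcal{O}_K)$, minimality of $d$ gives $\bar{A}\neq 0$ in $\mathbb{M}_2(\mathbb{K})$. Here the hypothesis that the branch is non-empty enters decisively: $\mathbf{q}$ lies in some maximal order, so $\mathrm{tr}(\mathbf{q})$ and $\det(\mathbf{q})$ are integral, whence $\nu(\mathrm{tr}\,A)\ge d\ge 1$ and $\nu(\det A)\ge 2d\ge 2$; thus $\bar{A}$ has zero trace and determinant and is a non-zero nilpotent, forcing $\ker\bar{A}=\mathrm{im}\,\bar{A}=:L$, a line in $\mathbb{K}^{2}$. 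Taking $\Lambda'=\Lambda_L$ to be the index-$\pi_K$ sublattice reducing to $L$, a computation in a basis $(v,w)$ with $\bar{v}$ spanning $L$ shows $A\Lambda_L\subseteq\pi_K\Lambda_L$, equivalently $\ell(\Lambda_L)\le d-1$; with the Lipschitz bound this gives $\ell(\Lambda_L)=d-1$.

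I expect the main obstacle to be the second-order part of this check: one needs $Av\in\pi_K\Lambda_L$, not merely $Av\in\pi_K\Lambda$. Writing $Av=\alpha v+\beta w$ with $\alpha,\beta\in\pi_K\mathcal{O}_K$ and $Aw=\gamma v+\delta w$, the required condition is $\beta\in\pi_K^{2}\mathcal{O}_K$; since $\bar{A}$ is rank-one nilpotent, $\gamma$ is a unit and $\delta\in\pi_K\mathcal{O}_K$, so $\alpha\delta\in\pi_K^{2}\mathcal{O}_K$, and the identity $\det A=\alpha\delta-\beta\gamma$ with $\det A\in\pi_K^{2}\mathcal{O}_K$ forces $\beta\gamma\in\pi_K^{2}\mathcal{O}_K$, hence $\beta\in\pi_K^{2}\mathcal{O}_K$, exactly as needed; thus the integrality of $\det\mathbf{q}$ is what supplies this refinement. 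Iterating the descent from any $[\Lambda]$ with $1\le\ell(\Lambda)=d\le m$ produces a path of length $d$ ending in $\{\ell\le 0\}=\mathcal{S}_K(\mathbf{q})$, so $[\Lambda]$ lies within distance $m$ of the branch, completing the equality of vertex sets. Finally, since $\{\ell\le m\}$ is a $\delta$-neighbourhood of the connected branch it is connected, hence convex in the tree, so the induced subgraph $\mathcal{S}_K(\alpha\mathbf{q})$ and the smallest subtree $\mathcal{S}_K(\mathbf{q})^{[\nu(\alpha)]}$ coincide as subgraphs, and therefore as topological graphs.
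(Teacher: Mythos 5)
Your proof is correct. One thing to note at the outset: the paper itself contains no proof of this lemma --- it is imported verbatim from Prop.~2.4 of the reference cited in the statement --- so there is no in-paper argument to compare yours against; what you have written is a sound, self-contained substitute. Your organization is efficient: encoding both branches through the single denominator function $\ell(\Lambda)=\min\{t\in\mathbb{Z}:\mathbf{q}\Lambda\subseteq\pi_K^{-t}\Lambda\}$, so that $V\big(\mathcal{S}_K(\mathbf{q})\big)=\{\ell\le 0\}$ and $V\big(\mathcal{S}_K(\alpha\mathbf{q})\big)=\{\ell\le \nu(\alpha)\}$, reduces the lemma to comparing a sub-level set with a metric neighbourhood; the $1$-Lipschitz bound gives the easy inclusion, and the descent step (a neighbour lowering $\ell$ by exactly one) gives the hard one. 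You also put the hypothesis of a non-empty branch in exactly the right place: integrality of $\mathrm{tr}(\mathbf{q})$ and $\det(\mathbf{q})$ is what makes the reduction $\bar{A}$ of $A=\pi_K^{d}\mathbf{q}$ a nonzero nilpotent (so $\ker\bar{A}=\mathrm{im}\,\bar{A}$ is a well-defined line $L$), and the stronger valuation $\nu(\det A)\ge 2d\ge 2$ is precisely what upgrades $A\Lambda_L\subseteq\pi_K\Lambda$ to $A\Lambda_L\subseteq\pi_K\Lambda_L$ via $\beta\in\pi_K^{2}\mathcal{O}_K$; without it the descent fails, as it must, since the statement is false for non-integral $\mathbf{q}$. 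Your closing step is also genuinely needed and correct: the branch of $\alpha\mathbf{q}$ is by definition the \emph{induced} subgraph on its vertex set, while the tubular neighbourhood is defined as the \emph{smallest subtree} containing the relevant vertices, and these coincide because in a tree the set of vertices within distance $m$ of a connected subgraph induces a connected, hence convex, subgraph. The only cosmetic caveat is the degenerate case $\mathbf{q}=0$ (where $\ell$ is not defined as a minimum), but there both sides of the identity are the whole tree, so nothing is lost.
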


The following examples illustrate how these lemmas allow us to compute branches of orders:

\begin{Example}
    If $\mathcal{S}_K(\mathbf{q})=\mathcal{F}_0$ is as 
    in Lemma \ref{nilbranches}, then
    $\mathcal{S}_0(\pi_K \mathbf{q})$ is the graph whose 
    vertices are the
    balls of radius $|\pi_K|$ or more, according to the last lemma.
    Note that this branch is in the $\mathcal{M}(K)$-orbit of
    $\mathcal{F}_0$, which is consistent with the fact that the 
    orders $\mathcal{O}_K[\mathbf{q}]$ and 
    $\mathcal{O}_K[\pi_K \mathbf{q}]$ are conjugates.
\end{Example}

\begin{Example}
    If $L=\mathbb{Q}_2$ and $\mathbf{q}=\sbmattrix0{20}10$, then
    we can write $\mathbf{q}=2(2\mathbf{u}+\mathbf{1})$,
    where $\mathbf{u}$ 
    has the minimal polynomial
    $x^2+x-1$, and $\mathbf{1}$ is the identity matrix.
    Therefore $\mathbf{u}$ generates the ring of integers
    of an unramified quadratic extension. In this case
    $\mathcal{S}_K(\mathbf{q})$ is a graph with 10 total vertices.
    One central vertex, three neighbors of the central vertex
    and six leaves.
\end{Example}

\begin{Example}\label{exa47}
    If $\mathbf{q}=\sbmattrix u00{u+\epsilon}$, then
    we can write $\mathbf{q}=u\mathbf{1}+\epsilon\mathbf{d}$, where
    $\mathbf{d}=\sbmattrix 0001$ generates an algebra isomorphic to
    $\mathcal{O}_K\times \mathcal{O}_K$. We conclude that
    $\mathcal{S}_K(\mathbf{q})=\mathcal{P}^{[\nu(\epsilon)]}$,
    where $\mathcal{P}$ is a maximal path. Since $\epsilon$
    is the difference of the eigenvalues of $\mathbf{q}$, we
    describe $\mathcal{S}_K(\mathbf{q})$ as a tubular neighborhood
    of a maximal path whose width is the valuation of the 
    difference of the eigenvalues. By taking conjugates, the same is true for any matrix with eigenvalues in the field $K$.
    The visual limits of the maximal path can be characterized as
    the invariant projective points of the associated Moebius
    transformation, which is $\eta(z)=\frac u{u+\epsilon}z$
    for the diagonal matrix $\mathbf{q}$ above.
\end{Example}


In general, we write 
$\mathcal{S}_K(\mathbf{q}_1,\dots,\mathbf{q}_n)=
\bigcap_{i=1}^n\mathcal{S}_K(\mathbf{q}_i)$. 
This is the graph whose vertices correspond
precisely to the maximal orders containing 
each of the matrices $\mathbf{q}_i$,
or equivalently, the group or order that these matrices generate.
We use the notations $\mathcal{S}_K(\mathfrak{H})=
\mathcal{S}_K(\mathbf{q}_1,\dots,\mathbf{q}_n)$, where 
$\mathfrak{H}=\mathcal{O}_K[\mathbf{q}_1,\dots,\mathbf{q}_n]$ 
is the generated order, or $\mathcal{S}_K(G)=
\mathcal{S}_K(\mathbf{q}_1,\dots,\mathbf{q}_n)$, if 
$G=\langle \mathbf{q}_1,\dots,\mathbf{q}_n \rangle$ is the 
generated multiplicative group.
We say that two vertices in $\mathfrak{t}_K$ or $\mathcal{T}_K$
are of the same type if the distance between them belongs to
$2\nu_K(K^*)=2\lambda\mathbb{Z}$.

\begin{Lemma}\label{lemma branch and MT}
If $\mathbf{q}$ is a unit, i.e., if it is integral 
over $\mathcal{O}_K$
and its determinant is a unit, then the elements of
$\mathcal{S}_K(\mathbf{q})\subseteq \mathcal{T}_K$ are precisely
the invariant points of the Moebius transformation $\chi$ 
corresponding to $\mathbf{q}$.
\end{Lemma}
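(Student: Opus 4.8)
The plan is to compute both sides vertex-by-vertex through the lattice model of $\mathcal{T}_K$ and then to upgrade the resulting equality of vertex sets to an equality of subspaces of $\mathcal{T}_K$. Recall that a vertex of $\mathcal{T}_K$ is a homothety class $[\Lambda]$, that the maximal order attached to it is $\mathrm{End}_{\mathcal{O}_K}(\Lambda)$, and that the Moebius transformation $\chi$ attached to $\mathbf{q}$ (well defined, since $\det\mathbf{q}\in\mathcal{O}_K^*$ makes $\mathbf{q}$ invertible) acts by $[\Lambda]\mapsto[\mathbf{q}\Lambda]$. Thus $[\Lambda]$ is a vertex of $\mathcal{S}_K(\mathbf{q})$ precisely when $\mathbf{q}\in\mathrm{End}_{\mathcal{O}_K}(\Lambda)$, i.e. when $\mathbf{q}\Lambda\subseteq\Lambda$, whereas $[\Lambda]$ is fixed by $\chi$ precisely when $\mathbf{q}\Lambda=c\Lambda$ for some $c\in K^*$. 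The first step is therefore to prove that, under the standing hypotheses, these two conditions on $\Lambda$ are equivalent.

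The heart of the argument is an elementary-divisor computation. By the theory of elementary divisors over the discrete valuation ring $\mathcal{O}_K$ one may choose a basis $e_1,e_2$ of $\Lambda$ in which $\mathbf{q}\Lambda=\langle \pi_K^{a}e_1,\pi_K^{b}e_2\rangle$, with $a,b\in\mathbb{Z}$ and $a+b=\nu_K(\det\mathbf{q})$. Since $\det\mathbf{q}$ is a unit we have $a+b=0$. If $\mathbf{q}\Lambda\subseteq\Lambda$, then $a,b\geq 0$, forcing $a=b=0$ and hence $\mathbf{q}\Lambda=\Lambda$, so that $[\mathbf{q}\Lambda]=[\Lambda]$. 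Conversely, if $\mathbf{q}\Lambda=c\Lambda$, then $a=b=\nu_K(c)$, so $2\nu_K(c)=0$ gives $\nu_K(c)=0$; thus $c$ is a unit and again $\mathbf{q}\Lambda=\Lambda\subseteq\Lambda$. Hence both conditions reduce to $\mathbf{q}\Lambda=\Lambda$, and $\mathcal{S}_K(\mathbf{q})$ and the fixed-point set of $\chi$ have the same vertices. (That this common vertex set is non-empty, so that $\chi$ is elliptic rather than hyperbolic, follows from the integrality of $\mathbf{q}$: then $\mathcal{O}_K[\mathbf{q}]$ is an order and $\mathbf{q}$ lies in some maximal order.)

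It remains to pass from vertices to the whole topological tree, which I expect to be the only delicate point. Because $\nu_K(\det\mathbf{q})=0$ is even, $\chi$ is type-preserving and therefore inverts no edge; consequently its fixed-point set contains no isolated edge-midpoint and, whenever it meets the interior of an edge, it contains both endpoints of that edge. One then checks the two standard facts that the fixed-point set of the isometry $\chi$ is convex (it contains the geodesic between any two of its points, on which $\chi$ restricts to a fixed-endpoint isometry of a segment, hence to the identity) and that $\mathcal{S}_K(\mathbf{q})$ is connected (already noted in the text, since every maximal order on the path between two orders containing $\mathbf{q}$ contains their intersection). Both sets are thus connected subtrees equal to the topological realization of the induced subgraph on their common vertex set, and so they coincide. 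The main obstacle is precisely this last bookkeeping with edge interiors and inversions; once the type-preserving property is invoked it becomes routine, and the genuinely arithmetic input is confined to the elementary-divisor step.
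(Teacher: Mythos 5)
Your proof is correct, and its overall skeleton coincides with the paper's: both arguments first rule out fixed points that are merely midpoints of inverted edges, using the fact that a matrix with unit determinant induces a type-preserving simplicial automorphism, and then reduce the lemma to the vertex-level statement that, for a homothety class $[\Lambda]$, the conditions $\mathbf{q}\Lambda\subseteq\Lambda$ and $[\mathbf{q}\Lambda]=[\Lambda]$ are equivalent and both force $\mathbf{q}\Lambda=\Lambda$. Where you genuinely differ is in how that vertex-level equivalence is established. The paper argues with Haar measure: since $\det\mathbf{q}\in\mathcal{O}_K^*$, multiplication by $\mathbf{q}$ preserves the Haar measure of $K^2$; a lattice is compact open, hence of finite nonzero measure, so $\mathbf{q}\Lambda=a\Lambda$ forces $a$ to be a unit, and a strict inclusion $\mathbf{q}\Lambda\subsetneq\Lambda$ is impossible because $\Lambda\smallsetminus(\mathbf{q}\Lambda)$ would be a nonempty clopen set of positive measure. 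You instead invoke the elementary divisor theorem over $\mathcal{O}_K$: writing $\mathbf{q}\Lambda=\langle\pi_K^{a}e_1,\pi_K^{b}e_2\rangle$ with $a+b=\nu_K(\det\mathbf{q})=0$, the inclusion $\mathbf{q}\Lambda\subseteq\Lambda$ gives $a,b\geq 0$, hence $a=b=0$, while $\mathbf{q}\Lambda=c\Lambda$ gives $2\nu_K(c)=0$. Both computations are valid; yours is purely module-theoretic and has the mild advantage of not using local compactness of $K$ (it would survive over a complete discretely valued field with infinite residue field, where Haar measure is unavailable), whereas the paper's measure argument is shorter and recycles a compactness tool it uses elsewhere. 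Your concluding bookkeeping --- convexity of the fixed-point set, connectedness of the branch, and the identification of both sets with the realization of the induced subgraph on their common vertex set --- is sound, and is in fact spelled out in more detail than in the paper, which after discarding the edge-inversion case simply asserts that it suffices to treat vertices.
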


\begin{proof}
Since the action of Moebius transformations is simplicial
on the topological graph $\mathcal{T}_K$, an element
$x\in \mathcal{T}_K$ is invariant if and only if one of the following
conditions holds. 
\begin{itemize}
    \item The endpoints of the edge containing $x$ are invariant.
    \item $\chi$ transposes the endpoints of the edge $e$ containing
    $x$, and $x$ is the midpoint of $e$.
\end{itemize}
The second alternative cannot hold since the determinant of
$\mathbf{q}$ is a unit, and therefore the action on $\mathcal{T}_K$ 
induced by the transformation $\chi$ preserves the type of
vertices \cite[\S II.1.3, Lemma 1]{SerreTrees}. 
Therefore, we just need to prove the result for vertices. 
Indeed, let us denote by $v_{[\Lambda]}$ the vertex 
corresponding to the class $[\Lambda]$ of the full rank 
lattice $\Lambda$. The set of invariant vertices is 
$$\lbrace v_{[\Lambda]} : \gamma \bullet v_{[\Lambda]}=v_{[\Lambda]}\rbrace
=\lbrace v_{[\Lambda]} : [\mathbf{q}\Lambda] = [\Lambda]\rbrace.$$
Recall that $[\mathbf{q} \Lambda] = [\Lambda]$ means 
$\mathbf{q} \Lambda 
= a\Lambda$ for $a\in K$. Since the determinant of $\mathbf{q}$
is a unit, 
multiplication by $\mathbf{q}$ preserves the Haar measure in $K^2$.
Since a lattice is a compact open set, its measure is finite 
and non-null,
whence $a$ must be a unit. On the other hand, by definition
$$V\big(\mathcal{S}_K(\mathbf{q})\big)=
\lbrace v_{[\Lambda]} : \mathbf{q} \in\mathfrak{D}_{\Lambda}\rbrace=
\lbrace v_{[\Lambda]} : \mathbf{q}\Lambda\subseteq\Lambda\rbrace.$$
Again, since $\mathbf{q}$ preserves the Haar measure, 
the identity defining 
the last set is equivalent to $\mathbf{q}\Lambda=\Lambda$,
since otherwise $\Lambda\smallsetminus (\mathbf{q}\Lambda)$
is a compact clopen set, and therefore it has positive measure. 
The result follows.
\end{proof}

\section{Field extensions and Bruhat-Tits trees}
\label{subsection ext BTT}

In what follows, we denote by $K$ a local field with valuation 
$\nu_K: K \twoheadrightarrow \mathbb{Z} \cup \lbrace \infty \rbrace$,
i.e., $\lambda=1$ in the notations of \S\ref{branches and Int rep}. We let $L$ be a finite
extension  of $K$. Let $e(L/K)$ be the ramification index of $L/K$.
We normalize the valuation
$\nu_L$ of $L$ by setting $\nu_L(\pi_L)=\frac{1}{e(L/K)}$, 
for some (equiv. any) uniformizing parameter $\pi_L$.
In other words, we assume that $\nu_L(L^*)=\frac 1{e(L/K)} \mathbb{Z}$.
Note that, with this convention,
we get $\nu_K(x)=\nu_L(x)$ for any element $x\in K$, so we can 
write $\nu$ unambiguously for either $\nu_K$ or $\nu_L$.
We adopt this convention in all that follows.
Since balls are fully determined by its radius and one center, 
the vertex set $V_K=V({\mathcal{T}_K})$ of $\mathcal{T}_K$ 
can be naturally identified with a subset of the vertex set
$V_L=V({\mathcal{T}_L})$  of $\mathcal{T}_L$.
In what follows, an element $v\in V({\mathcal{T}_L})$ is 
called a $K$-vertex, 
if it corresponds to a vertex in $V({\mathcal{T}_K})$.
This definition extends naturally to intermediate fields.
Following \cite{ArenasBravoExt}, vertices in $\mathcal{T}_L$ that are  
not $K$-vertices are called here $K$-ghost vertices.
Since neighboring vertices in $\mathcal{T}_K$ correspond to 
$K$-vertices that are connected by a path with $e(L/K)$ edges  in 
$\mathcal{T}_L$, the identification of vertices in $\mathcal{T}_K$
with $K$-vertices in $\mathcal{T}_L$
cannot  define a simplicial map unless $L/K$ is 
an unramified extension.  In order to solve this problem, we introduce
a normalized distance function on 
$V(\mathcal{T}_L)\subseteq\mathcal{T}_L$, by the formula:
$$d^*(v,v')=\frac{1}{e(L/K)}d(v,v'),$$
where $d$ is the usual distance on graphs defined in 
\S\ref{subsection conventions}. This extends naturally to
a distance in the topological graph $\mathcal{T}_L$ that is 
used in the sequel. The normalized graph distance is thus coherent 
with the corresponding definition with intermediate fields,
so we do not need to use suffixes when considering $\mathcal{T}_K$ and 
$\mathcal{T}_L$ simultaneously. Furthermore, the identification of 
vertices induces a (non-simplicial) map $\iota_{L/K}:\mathcal{T}_K
\to \mathcal{T}_L$ that
is a closed embedding, and an isometry with its image.
We identify the space $T_K$ with a subspace of $T_L$ in this way.
In this sense we write $\mathcal{T}_K\subseteq\mathcal{T}_L$
in what follows. The same holds for any intermediate field 
$K \subseteq F \subseteq L$. Under this identification, the distance 
between two $L$-vertices that are also $F$-vertices, for some 
intermediate field $F$, is unambiguously defined. A real number is said 
to be defined over $F$ if it is a multiple of the valuation 
of the corresponding uniformizer $\nu(\pi_F)$. We apply this definition
to both, valuations of elements or distance between points in
$\mathcal{T}_L$. Next result is evident but useful:

\begin{Lemma}\label{lemma defined over F}
Let $v$ and $v'$ be two $F$-vertices in $\mathcal{T}_L$.
Then, a point $v''\in \mathcal{P}_L[v,v']$ is an $F$-vertex 
if and only if its distance to either $v,v'$ is defined over $F$.  \qed
\end{Lemma}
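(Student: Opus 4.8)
The plan is to work on the line $\mathcal{P}_L[v,v']$, on which the tree metric degenerates to the ordinary metric on an interval, and to exploit the explicit identification of $F$-vertices with balls in the completion $F$. First I would recall from \S\ref{branches and Int rep} that in the ball model a vertex is a closed ball $B_{a,r}$ whose radius has valuation $r$, and that a vertex of $\mathcal{T}_L$ is an $F$-vertex precisely when it is one of the balls arising in $F$, i.e. a ball $B_{a,r}$ with $a\in F$ and $r\in\nu(F^*)$; the latter condition is exactly that $r$ be a multiple of $\nu(\pi_F)$, which is the meaning of being \emph{defined over} $F$. The key structural fact, already established in \S\ref{subsection ext BTT}, is that the normalized distance $d^*$ between two neighboring balls differing in radius by one step of the $F$-valuation equals $\nu(\pi_F)$, so that moving along $\mathcal{P}_L[v,v']$ by a segment of length $\nu(\pi_F)$ toward $v'$ corresponds to passing from one $F$-ball to the next $F$-ball on the line.

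The forward direction is then immediate: if $v''$ is an $F$-vertex lying on $\mathcal{P}_L[v,v']$, then $v,v',v''$ are all balls defined over $F$ lying on a common line, so the distances $d^*(v,v'')$ and $d^*(v',v'')$ are differences of radii that are both multiples of $\nu(\pi_F)$, hence defined over $F$. For the converse, I would argue that the vertices of $\mathcal{P}_L[v,v']$ that happen to be $F$-vertices are precisely those occurring at the positions $d^*(v,x)=m\,\nu(\pi_F)$ along the line, for integers $m$ with $0\le m\le d^*(v,v')/\nu(\pi_F)$; this is because, once one fixes the chain of balls realizing the path from $v$ to $v'$, the radii increase by the $L$-valuation step at each edge of $\mathcal{T}_L$, and exactly every $e(L/F)$-th such ball is defined over $F$. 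Thus if $v''\in\mathcal{P}_L[v,v']$ has $d^*(v,v'')$ defined over $F$, it sits at one of these admissible positions and is therefore an $F$-vertex.

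The one point requiring care — and what I expect to be the main obstacle — is to verify that a point whose distance to $v$ is an $F$-multiple is actually a \emph{vertex} of $\mathcal{T}_L$ and not merely an interior point of an $L$-edge, and moreover that it is an $F$-ball rather than an $L$-ball with center not in $F$. The first half follows because $\nu(\pi_F)=e(L/F)\,\nu(\pi_L)$ is an integer multiple of the $L$-edge length $\nu(\pi_L)$, so $F$-multiples of distance always land on honest $L$-vertices. The second half uses that $v$ and $v'$ are $F$-vertices with $F$-rational centers: every ball on the segment between two balls with centers in $F$ can itself be taken to have center in $F$ (one may reuse the center of whichever endpoint it contains), so the center condition is automatic once the radius condition holds. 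Combining the two directions gives the stated equivalence. Since the two endpoints are symmetric, the criterion ``distance to either $v,v'$ is defined over $F$'' is well posed: $d^*(v,v'')$ is an $F$-multiple if and only if $d^*(v',v'')=d^*(v,v')-d^*(v,v'')$ is, because $d^*(v,v')$ is itself defined over $F$.
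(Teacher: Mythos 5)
Your proof is correct. The paper gives no proof of this lemma at all (it is declared ``evident'' and the tombstone is attached directly to the statement), and your argument — writing every vertex of $\mathcal{P}_L[v,v']$ as a ball whose center can be taken to be the $F$-rational center of whichever endpoint it contains, and tracking the radius condition through the normalized distance $d^*$ and the relation $\nu(\pi_F)=e(L/F)\,\nu(\pi_L)$ — is exactly the reasoning that the setup of \S\ref{subsection ext BTT} makes available, so it fills the omitted proof in essentially the intended way.
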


A finite line $\mathcal{P}_L[v,v']$ between two $F$-vertices 
$v,v'$ is said to be 
defined over  $F$. The same applies to maximal paths and rays, 
replacing endpoints by visual limits if needed.
Note that visual limits of $\mathcal{T}_F\subseteq \mathcal{T}_L$ 
are identified with the corresponding visual limits of $\mathcal{T}_L$. 
This defines an embedding from the set $\partial_{\infty}(\mathcal{T}_K)$ into
$\partial_{\infty}(\mathcal{T}_L)$ that coincides with the usual 
embedding of $\mathbb{P}^1(F)$ into $\mathbb{P}^1(L)$.

Let $\mathcal{P}_L(a,b)$ be the unique maximal path in 
$\mathcal{T}_L$ whose visual limits are 
$a,b \in \mathbb{P}^1(L)$.
When the extension $L/K$ is Galois, the group $\mathcal{G}_L=\mathrm{Gal}(L/K)$ acts
naturally on the set $\mathbb{P}^1(L)$.
This action extends to the topological tree $\mathcal{T}_L$,
by exactly the same reason as the action of Moebius transformations,
i.e., by considering the action on partitions of the projective line.
In the sequel, this Galois action is called standard, as opposed to the
twisted actions defined in next section.
The standard Galois action satisfies the following compatibility relation on maximal paths (cf. \cite[\S 3]{ArenasBravoExt}):
$$\sigma\Big(\mathcal{P}_L(a,b)\Big)=
\mathcal{P}_L\Big(\sigma(a),\sigma(b)\Big),$$
where $\sigma\in\mathcal{G}_L$ and where $a,b\in\mathbb{P}^1(L).$
We also have a compatibility property with the action of the group $\mathcal{M}(L)$, of Moebius transformations, on $\mathcal{T}_L$.
Indeed, we have:
\begin{equation}\label{eq comp actions}
\sigma(\mu \bullet x)=\sigma(\mu)\bullet \sigma(x),
\end{equation}
for any $\sigma \in \mathcal{G}_L$, $\mu\in\mathcal{M}(L)$ and any 
$x\in \mathcal{T}_L$. 


Let $L/K$ be a quadratic (galois) extension.
It follows from \cite[\S2.6.1]{Titsreductivegroups} or \cite{Rousseau}, that the invariant space $T_L^{\mathcal{G}_L}$ is strictly  
larger than the space $T_K$ corresponding to
$\mathcal{T}_K$ whenever $L/K$ is wildly ramified.
In general, the invariant subspace of a group action
is not always a topological subgraph. It can also
be the center of an edge.

\begin{Lemma}
    If the action of a group $\mathcal{H}$ on a topological tree
    $\mathcal{C}$ has an invariant vertex, then the set
    of invariant points corresponds to a subgraph. In particular,
    the invariant space $T_L^{\mathcal{G}_L}$ corresponds to a
    subtree of $\mathcal{T}_L$.
\end{Lemma}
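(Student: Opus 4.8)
The plan is to analyze the fixed-point set $F=\mathcal{C}^{\mathcal{H}}$ directly, using that $\mathcal{H}$ acts by simplicial isometries (as do all the actions considered here). First I would record two soft properties of $F$. It is closed, being an intersection of fixed-point sets of continuous maps. More importantly, it is convex: if $x,y\in F$, then each $h\in\mathcal{H}$ carries the unique geodesic $[x,y]$ to the geodesic $[h(x),h(y)]=[x,y]$, and an isometry fixing the two endpoints of a geodesic fixes it pointwise, so $[x,y]\subseteq F$. Thus $F$ is connected, and once I show it is a subgraph it will automatically be a subtree. The only thing left to prove is that $F$ is built from whole vertices and whole edges, i.e., that it meets no edge in just an interior arc.

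The key step is the claim that whenever $F$ contains an interior point $x$ of an edge $e=\{a,b\}$, it contains the entire closed edge $e$. I would argue as follows. For each $h\in\mathcal{H}$, simpliciality forces $h(e)$ to be an edge, and since $h(x)=x$ lies in the interior of both $e$ and $h(e)$ while distinct edges have disjoint interiors, we get $h(e)=e$. A simplicial self-map of an edge either fixes both endpoints or interchanges them, so each $h$ either fixes $a$ and $b$ or swaps them. To rule out swaps I would invoke the hypothesis through the invariant vertex $v_0$: by convexity $[v_0,x]\subseteq F$, and since $x$ is interior to $e$ this geodesic reaches $x$ through one of the endpoints, say $a$ (this also covers the case $v_0\in\{a,b\}$). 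Hence $a\in F$, so $h(a)=a$ for every $h$; no element can then swap $a$ and $b$, and therefore every $h$ fixes both endpoints. This gives $a,b\in F$, and by convexity $e\subseteq F$, proving the claim.

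With the claim in hand I would conclude by setting $V'=V(\mathcal{C})\cap F$ and letting $A'$ be the set of edges of $\mathcal{C}$ contained in $F$; each edge in $A'$ has both endpoints in $V'$, so $(V',A')$ is a genuine combinatorial subgraph. Every point of $F$ is either a vertex, hence in $V'$, or interior to some edge, which by the claim lies wholly in $F$ and thus in $A'$; conversely the realization of $(V',A')$ is contained in $F$. Therefore $F$ is exactly the topological subgraph associated to $(V',A')$, and being convex it is a subtree. I expect the one genuinely delicate point to be the step excluding edge-inversions: without an invariant vertex the swap case really does occur and $F$ can reduce to a single edge-midpoint, so the argument must use the hypothesis precisely where it forces a fixed endpoint along the geodesic from $v_0$.

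Finally, for the ``in particular'' I would apply the general statement with $\mathcal{H}=\mathcal{G}_L$. Its standard action on $\mathcal{T}_L$ is simplicial and restricts to the identity on $\mathcal{T}_K$, so every $K$-vertex is $\mathcal{G}_L$-invariant; taking any such vertex as $v_0$, the lemma shows that $T_L^{\mathcal{G}_L}$ is a subtree of $\mathcal{T}_L$.
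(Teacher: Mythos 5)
Your proof is correct and follows essentially the same route as the paper's: use the invariant vertex to make the geodesic from $v_0$ to an invariant interior point pointwise fixed, deduce that an endpoint of the ambient edge is fixed (which rules out edge inversions), conclude the whole edge is fixed, and get connectedness from invariance of geodesics between fixed points. The only difference is expository: you spell out the edge-stabilization and flip-exclusion steps that the paper's proof leaves implicit.
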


\begin{proof}
    Let $v\in V(\mathcal{C})$ be an invariant vertex. If 
    $x\in\mathcal{C}$ is an invariant point inside an edge, 
    then so is every point in the unique 
    path joining $x$ to $v$. One of them is an endpoint of the edge
    containing $x$, and therefore every point in that edge is invariant.
    This proves that the invariant subspace is a subgraph.
    To see that it is a subtree, we need to see that it is connected.
    But this again follows from the fact that the unique path between
    invariant points is also invariant.
    The result follows. 
\end{proof}

We write  $\mathcal{T}_L^{\mathcal{G}_L}$ for the invariant topological subgraph. Now, assume that $\mathrm{char}(K)\neq 2$.
In the sequel, we denote by $\delta_K(a)$ the quadratic defect of $a \in K$, i.e., the smallest fractional ideal in $K$ spanned by an element of the form $a-b^2$, with $b\in K$ (cf. \cite{omeara}).
Next result describes the space $\mathcal{T}_L^{\mathcal{G}_L} 
\smallsetminus \mathcal{T}_K$ in terms of certain quadratic defects.

\begin{figure}
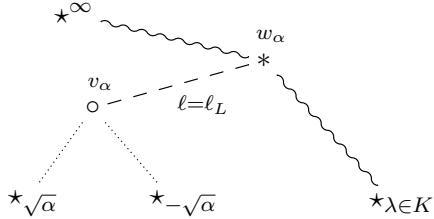

\[
\xygraph{!{<0cm,0cm>;<0.9cm,0cm>:<0cm,0.9cm>::}
!{(-2.4,1.8) }*+{}="i"
!{(-1,0.8) }*+{\circ}="b"
!{(-1,1.1) }*+{\hphantom{x}^{v_{\alpha}}}="bim"
!{(1.5,1.5) }*+{\ast}="f"
!{(1.5,1.8) }*+{\hphantom{x}^{w_{\alpha}}}="fim"
!{(0.5,0.8) }*+{\hphantom{x}^{\ell=\ell_L}}="mim"
!{(3.4,-0.6) }*+{\hphantom{x}\star_{\lambda \in K}}="a"
!{(-1.4,2.2) }*+{\hphantom{x}\star^{\infty}}="i"
!{(0.2,-0.6)}*+{\hphantom{x}\star_{-\sqrt{\alpha}}}="g"
!{(-2,-0.6)}*+{\hphantom{x}\star_{\sqrt{\alpha}}}="e"
"b"-@{--}"f" "a"-@{~}"f" "b"-@{.}"e" "b"-@{.}"g" "f"-@{~}"i"}
\]
\caption{The trees $\mathcal{T}_K \subseteq \mathcal{T}_L^{\mathcal{G}_L} \subset \mathcal{T}_L$. In this figure $w_{\alpha}=B_{\alpha, \frac{1}{2}\nu\big(\delta_K(\alpha)\big)}$ is the closest point of $\mathcal{T}_K$ to $\mathcal{P}_L(\sqrt{\alpha},-\sqrt{\alpha})$, while $v_\alpha=B_{\sqrt{\alpha}, \nu(2\sqrt{\alpha})}$ is the unique $\mathcal{G}_L$-invariant point of $\mathcal{P}_L(\sqrt{\alpha},-\sqrt{\alpha})$.
}\label{figura inv subtrees}
\end{figure}

\begin{Prop}\label{prop hairs}
Let us write $L=K(\sqrt{\alpha})$, with $\alpha \in \mathcal{O}_K$.
The invariant subspace $\mathcal{T}_L^{\mathcal{G}_L}$ contains precisely 
the points $x$ whose distance $d=d(x,\mathcal{T}_K)$ from $\mathcal{T}_K$
satisfies $d\leq \ell_{L}:=
\nu(2\sqrt{\alpha})-\frac{1}{2}\nu\big(\delta_K(\alpha)\big)$ from $\mathcal{T}_K$.
\end{Prop}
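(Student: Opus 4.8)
The plan is to reduce the statement to a vertex-by-vertex computation and to extract both the invariance condition and the distance to $\mathcal{T}_K$ from the quadratic defect of $\alpha$. By the previous lemma, $\mathcal{T}_L^{\mathcal{G}_L}$ is a subtree, and it contains $\mathcal{T}_K$: writing the standard action on balls as $\sigma(B_{a,r})=B_{\sigma(a),r}$ (valid since $\sigma$ preserves $\nu$), every vertex with center in $K$ is fixed. A short check shows $\ell_L\in\tfrac1{e(L/K)}\mathbb{Z}$ (indeed $\ell_L\in\tfrac12\mathbb{Z}$, and $\ell_L=0$ when $L/K$ is unramified), so the closed tubular neighborhood $N=\{x\in\mathcal{T}_L:d(x,\mathcal{T}_K)\le\ell_L\}$ is a subtree of $\mathcal{T}_L$ whose extremal points are genuine vertices. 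Since $\mathcal{T}_L^{\mathcal{G}_L}$ is connected and contains $\mathcal{T}_K$, it has no isolated fixed edge-midpoints; hence it is enough to prove, for every vertex $B_{a,r}$ of $\mathcal{T}_L$, the equivalence $B_{a,r}\in\mathcal{T}_L^{\mathcal{G}_L}\iff d(B_{a,r},\mathcal{T}_K)\le\ell_L$, the general case following by convexity of the two subtrees.

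For the invariance condition I write $a=p+q\sqrt{\alpha}$ with $p,q\in K$. Then $B_{\sigma(a),r}=B_{a,r}$ holds exactly when $\nu(a-\sigma(a))\ge r$, and since $a-\sigma(a)=2q\sqrt{\alpha}$ this reads
$$ r\ \le\ \nu(q)+\nu(2\sqrt{\alpha}). $$
So $B_{a,r}$ is $\mathcal{G}_L$-fixed if and only if $r\le\nu(q)+\nu(2\sqrt{\alpha})$.

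For the distance I enlarge the radius (if $q=0$ then $a\in K$, $B_{a,r}\in\mathcal{T}_K$ and the equivalence is trivial, so assume $q\neq0$): the concentric balls $B_{a,t}$ for $t\le r$ trace a geodesic, and $B_{a,t}$ admits a center in $K$ precisely when $t\le M:=\sup_{c\in K}\nu(a-c)$; by convexity of $\mathcal{T}_K$ the value $t=M$ gives the nearest-point projection of $B_{a,r}$ onto $\mathcal{T}_K$, whence $d(B_{a,r},\mathcal{T}_K)=\max(0,\,r-M)$. The one genuine computation is the evaluation of $M$. Writing $a-c=q(\sqrt{\alpha}-c')$ reduces it to $\sup_{c'\in K}\nu(\sqrt{\alpha}-c')$; and since $\sigma(\sqrt{\alpha}-c')=-(\sqrt{\alpha}+c')$ has the same valuation as $\sqrt{\alpha}-c'$, multiplying the two factors gives $\nu(\alpha-c'^2)=2\nu(\sqrt{\alpha}-c')$. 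The supremum of the right-hand side over $c'\in K$ is, by definition, $\nu(\delta_K(\alpha))$, and it is attained, so $M=\nu(q)+\tfrac12\nu(\delta_K(\alpha))$.

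Combining the two, for $r>\nu(q)+\tfrac12\nu(\delta_K(\alpha))$ one has $d(B_{a,r},\mathcal{T}_K)\le\ell_L\iff r\le\nu(q)+\nu(2\sqrt{\alpha})$, which is exactly the invariance condition; and for smaller $r$ the vertex already lies in $\mathcal{T}_K$, so $d=0\le\ell_L$ and the vertex is fixed. This proves the equivalence at all vertices and hence, by the reduction above, the proposition. I expect the main obstacle to be the distance computation, namely identifying the projection onto $\mathcal{T}_K$ and recognizing $\sup_{c}\nu(\sqrt{\alpha}-c)=\tfrac12\nu(\delta_K(\alpha))$. One must also verify $\ell_L\ge0$, so that the $d=0$ range is consistent; this follows from the standard bound $\delta_K(\alpha)\subseteq 4\alpha\,\mathcal{O}_K$, i.e.\ $\nu(\delta_K(\alpha))\le\nu(4\alpha)=2\nu(2\sqrt{\alpha})$.
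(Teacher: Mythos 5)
Your proof is correct and follows essentially the same route as the paper's: reduce to vertices via the preceding subtree lemma, characterize Galois-invariance of a vertex $B_{a,r}$ by the ultrametric condition $\nu\big(a-\sigma(a)\big)\ge r$ (the paper does this after first normalizing the center to $\sqrt{\alpha}$ by a Moebius transformation defined over $K$, which is exactly your $\nu(q)$ bookkeeping), and measure the distance to $\mathcal{T}_K$ through $\sup_{c\in K}\nu(a-c)$, i.e. through the quadratic defect. One cosmetic slip: the bound guaranteeing $\ell_L\ge 0$ should read $\delta_K(\alpha)\supseteq 4\alpha\,\mathcal{O}_K$, equivalently $\nu\big(\delta_K(\alpha)\big)\le\nu(4\alpha)$, not ``$\subseteq$''; the valuation inequality you actually use is the correct one.
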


\begin{proof}
Let $B$ be a vertex in $\mathcal{T}_L^{\mathcal{G}_L}$, which we write as $B=B_{z,r}$, for some $z\in L$ and some $r\in \mathbb{Z}$.
Since $z=x+y\sqrt{\alpha}$, with $x,y \in K$, we have $y \neq 0$ or $v\in \mathcal{T}_K$. In the first case, up to applying the Moebuis transformation $z \mapsto \frac{z-x}{y}$, we assume that $z=\sqrt{\alpha}$.
Since a generator of $\mathcal{G}_L$ exchanges
the visual limits of the maximal path 
$\mathcal{P}_L(\sqrt\alpha,-\sqrt\alpha)$, the unique $\mathcal{G}_L$-invariant point of the path is $B_{\sqrt{\alpha}, \nu(2\sqrt{\alpha})}$. Moreover, by exactly the same reason, the vertex $B_{\sqrt{\alpha}, r}$ is $\mathcal{G}_L$-invariant exactly when $r\leq \nu(2\sqrt{\alpha})$. See Fig. \ref{figura inv subtrees}(A).
Now, note that a vertex $B \in \mathcal{T}_L$ is a point in the space 
corresponding to $\mathcal{T}_K$ (but not necessarily a vertex
in the latter) exactly when 
$B=B_{\lambda,r}$, for some $\lambda\in K$ and some $r\in \nu(L^*)$.
In other words, the vertex $B_{\sqrt{\alpha}, r}$ belongs to $\mathcal{T}_K$ exactly when
$B_{\sqrt{\alpha}, r}\cap K\neq \varnothing$,
since any $\lambda\in B_{\sqrt{\alpha}, r}\cap K$ is a center of this ball. Equivalently,
we have
$r\leq \nu(\sqrt{\alpha}-\lambda)\leq 
\frac{1}{2}\nu\big(\delta_K(\alpha)\big)$,
from the definition of quadratic defect.
In particular, $B$ does not belong to $T_K$ precisely when $\nu(2\sqrt{\alpha}) \geq r> \frac{1}{2}\nu\big(\delta_K(\alpha)\big)$.
Therefore, the result holds for vertices.
Now the result follows since the invariant set 
$\mathcal{T}_L^{\mathcal{G}_L}$ is a subtree.
\end{proof}

\begin{Remark}
Prop. \ref{prop hairs} can be extended to even characteristic local fields by using the Artin-Schreier defect introduced in \cite{ArenasBravoCar2} and some results contained in the same article.
We also believe that it might be possible to extend
Prop. \ref{prop hairs} to the higher rank context, i.e., for the Bruhat-Tits building of $\mathrm{SL}_n$.
We intend to address this possibility in a future work.
\end{Remark}

\section{Twisted forms of Bruhat-Tits trees}\label{subsection forms of BTT}

Let $L/K$ be a (finite) Galois field extension with 
Galois group $\mathcal{G}=\mathrm{Gal}(L/K)$.
An $L/K$-form of $\mathcal{T}_L$ is a pair 
$\widehat{\mathcal{T}}=(\mathcal{T}_L,*)$, where
$\mathcal{T}_L$ is the Bruhat-Tits tree as a 
topological graph, while $*$ is a simplicial 
$\mathcal{G}$-action. We refer to $*$ as the 
Galois action on $\widehat{\mathcal{T}}$. Note that $*$ might
coincide with the usual $\mathcal{G}$-action,
in which case $\widehat{\mathcal{T}}$ is called
the trivial form, an denoted, by abuse of notation,
as $\mathcal{T}_L$. An isomorphism between two of these forms
is a $\mathcal{G}$-equivariant simplicial homeomorphism.
We denote by $\mathcal{F}(L/K)$ 
the set of all $L/K$-forms of $\mathcal{T}_L$, or equivalently, 
the set of simplicial $\mathcal{G}$-actions on $\mathcal{T}_L$.
We also use the notation $\mathcal{E}(L/K)$ for the set 
of isomorphism classes of such forms.
As usual, we consider it as a pointed set where the class of
$\mathcal{T}_L$, the trivial form, is the distinguished element. 
 
The usual $\mathcal{G}$-action induces an action of $\mathcal{G}$ 
on the group $\mathrm{Simp}(\mathcal{T}_L)$ of simplicial 
automorphisms of $\mathcal{T}_L$. Indeed, for each 
$\sigma \in \mathcal{G}$ and each 
$\phi \in \mathrm{Simp}(\mathcal{T}_L)$, we define 
${}^\sigma\phi \in \mathrm{Simp}(\mathcal{T}_L)$ by setting 
${}^\sigma\phi(x)=\sigma \Big( \phi \big(\sigma^{-1}(x)\big) \Big)$, 
for all $x \in T_L$.
For each $1$-cocycle $a\in Z^1=Z^1\big(\mathcal{G}, 
\mathrm{Simp}(\mathcal{T}_L)\big)$, 
viewed as a tuple $a=(a_\tau)_{\tau\in\mathcal{G}}$ with
$a_\tau\in\mathrm{Simp}(\mathcal{T}_L)$, we have 
an $L/K$-form $\mathcal{T}=\mathcal{T}_{a}$ 
with the Galois action defined by $\tau* 
x:= a_{\tau}\big(\tau (x)\big)$, for all $\tau \in \mathcal{G}$ and all
$x\in  T_L$. We claim that each class in $\mathcal{E}(L/K)$ can be 
represented by a form $\mathcal{T}_{a}$, for a certain 
$a\in Z^1$, as in Prop. 33 in \cite[\S I.5]{Serre-CohomologieGaloisienne}.
Since our setting is not included in the theory described by Serre
neither in \cite[\S I.5]{Serre-CohomologieGaloisienne},
nor in \cite[\S III.1]{Serre-CohomologieGaloisienne},
we provide a proof of the following result:

\begin{Prop}\label{prop cohgal for trees}
There exists an explicit pointed bijective correspondence between the
cohomology set $H^1\big(\mathcal{G}, \mathrm{Simp}(\mathcal{T}_L)\big)$
and the pointed set $\mathcal{E}(L/K)$ sending every cocycle class
$[a]$ to the isomorphism class $[\mathcal{T}_{a}]$ of the corresponding 
form $\mathcal{T}_{a}$. 
\end{Prop}

\begin{proof}
  The fact that the formula $\tau*x= a_{\tau}\big(\tau (x)\big)$ defines 
  an action is an straightforward standard computation. This action
  is simplicial, since both the Galois group
  and $\mathrm{Simp}(\mathcal{T}_L)$ act on the tree via simplicial maps.
  This proves that $\mathcal{T}_{a}$ is indeed a $L/K$-form with our definition.

  The map $[a]\mapsto [\mathcal{T}_{a}]$ is injective since a 
  $\mathcal{G}$-equivariant homeomorphism $f$ between two forms 
  $\mathcal{T}_{a}$ and $\mathcal{T}_{b}$ satisfies
  $b_{\tau}\Big(\tau \big( f(x)\big)\Big)=
  f\Big(a_{\tau}\big(\tau (x)\big)\Big)$ for every point $x$ in the tree.
  This means that $f^{-1}\circ b_\tau\circ {}^{\tau} f=a_\tau$, and this 
  proves that the cocycles are cohomologous. On the other hand, for every
  form $\widehat{\mathcal{T}}$, the corresponding action 
  $x \mapsto \tau * x$ defines a cocycle via 
  $a_\tau(x)=\tau*\big(\tau^{-1}(x)\big)$, and this proves that
  the map is surjective.
\end{proof}

A Severi-Brauer variety of dimension one over $L/K$ is a 
$K$-variety which becomes isomorphic to $\mathbb{P}^1$ over $L$.
When $\mathrm{char}(K) \neq 2$, they are the conics of the form 
$\lbrace [x,y,z] \in \mathbb{P}^2\lvert z^2=ax^2+by^2 \rbrace$, where 
$a,b \in K$ and  $ax^2+by^2-z^2$ has a root over $L$.
The set $\mathrm{SB}_1(L/K)$ of isomorphism classes of such 
varieties form a pointed set 
with $[\mathbb{P}^1]$ as the distinguished point.
Let $\mathrm{Aut}(\mathbb{P}^1)$ be the group of automorphisms,
as a variety, of $\mathbb{P}^1$.
The group $\mathcal{G}$ acts canonically on $\mathbb{P}^1(L)$ as 
$\sigma \cdot [x:y]=[\sigma(x):\sigma(y)]$, for 
$x,y\in \mathbb{A}^1(L)$ and $\sigma \in \mathcal{G}$.
Thus $\mathcal{G}$ acts on $\mathrm{Aut}(\mathbb{P}^1)(L)$ via 
${}^{\sigma}\phi([x:y])=\sigma \cdot
\big( \phi ( \sigma^{-1} \cdot [x:y] \big)$.
More concretely, note that $\mathrm{Aut}(\mathbb{P}^1)(L)
\cong\mathcal{M}(L)\cong\mathrm{PGL}_2(L)$, whence
the latter action corresponds to the canonical action of 
$\mathcal{G}$ on coefficients of 
Moebius transformations or matrices.
Next result, where we write 
$H^1(\mathcal{G}, G)=H^1\big(\mathcal{G}, G(L)\big)$
for any algebraic group $G$, describes $\mathrm{SB}_1(L/K)$ in terms
of cohomology sets:

\begin{Prop}\cite[\S III.1]{Serre-CohomologieGaloisienne}
There exists an explicit bijective correspondence between the 
pointed sets $\mathrm{SB}_1(L/K)$ and $H^1(\mathcal{G}, \mathrm{PGL}_2)$
that preserves the distinguished element. Furthermore, this map
sends an isomorphism class of Severi-Brauer varieties $[V]$ to the 
class of the cocycle defined by $a_\tau(x)=\tau*\big(\tau^{-1}(x)\big)$, 
when we identify any representative $V$ of $[V]$ with $\mathbb{P}^1$ via
a non-equivariant map, so that $x\mapsto \tau(x)$ and $x\mapsto \tau* x$
denote the Galois action on $\mathbb{P}^1$ and $V$, respectively.
\end{Prop}

As every Moebius transformation defines
a simplicial automorphism of $\mathcal{T}_L$, there is a map of 
$\mathcal{G}$-groups from $\mathrm{PGL}_2(L)$ to 
$\mathrm{Simp}(\mathcal{T}_L)$. Passing to cohomology, we can 
associate a class $\left[\widehat{\mathcal{T}}\right]=
\left[\widehat{\mathcal{T}}\right](\mathbb{V})\in\mathcal{E}(L/K)$ 
to every isomorphism class $[\mathbb{V}] \in\mathrm{SB}_1(L/K)$.
The set of $L$-points $\mathbb{V}(L)$ is recovered, as a 
$\mathcal{G}$-set, as the visual limit 
$\partial_{\infty}(\widehat{\mathcal{T}})$, i.e., the visual limit 
$\mathbb{P}^1(L)$ of $\mathcal{T}_L$ with an action
induced from the Galois action on $\widehat{\mathcal{T}}$. 
Next result is now straightforward:

\begin{Lemma}\label{lemma visual limits as severi-brauer varieties}
For each Severi-Brauer variety $\mathbb{V}$ as above, 
the $\mathcal{G}$-invariant visual limits of the
associated $L/K$-form $\mathcal{T}=\mathcal{T}(\mathbb{V})$
are in correspondence with the set $\mathbb{V}(K)$ of $K$-points.
In particular $\mathcal{T}$ has no invariant visual limit if
$\mathbb{V}$ is non-trivial.\qed
\end{Lemma}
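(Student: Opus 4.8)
The plan is to unwind the cohomological correspondence of Lemma \ref{lemma visual limits as severi-brauer varieties} and reduce the claim about invariant visual limits to the known identification of $K$-points of a Severi-Brauer variety with a twisted cohomology fiber. First I would recall the setup explicitly: the form $\mathcal{T}=\mathcal{T}(\mathbb{V})$ is obtained from a cocycle $a=(a_\sigma)_{\sigma\in\mathcal{G}}\in Z^1\big(\mathcal{G},\mathrm{PGL}_2(L)\big)$ representing $[\mathbb{V}]$, pushed forward along the $\mathcal{G}$-group map $\mathrm{PGL}_2(L)\to\mathrm{Simp}(\mathcal{T}_L)$. The twisted Galois action on $\widehat{\mathcal{T}}$ is $\sigma * x = a_\sigma\big(\sigma(x)\big)$, and by the discussion preceding the lemma the induced action on $\partial_\infty(\widehat{\mathcal{T}})=\mathbb{P}^1(L)$ is exactly the twisted action $\sigma * [x:y] = a_\sigma\cdot\big(\sigma\cdot[x:y]\big)$, which is $\mathbb{V}(L)$ as a $\mathcal{G}$-set.

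The core computation is then purely a matter of reading off fixed points. A visual limit $\xi\in\mathbb{P}^1(L)$ is $\mathcal{G}$-invariant for the twisted action precisely when $a_\sigma\cdot\big(\sigma\cdot\xi\big)=\xi$ for all $\sigma\in\mathcal{G}$. This is exactly the descent (Galois gluing) condition characterizing those $L$-points of $\mathbb{V}$ that descend to $K$-points: by the definition of the twisted form $\mathbb{V}$ via the cocycle $a$, the set $\mathbb{V}(K)$ is identified with the fixed set $\mathbb{V}(L)^{\mathcal{G}}=\{\xi\in\mathbb{P}^1(L): a_\sigma\cdot(\sigma\cdot\xi)=\xi \ \forall\sigma\}$. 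Hence the $\mathcal{G}$-invariant visual limits of $\mathcal{T}$ correspond bijectively to $\mathbb{V}(K)$, which is the first assertion. The final sentence then follows immediately: if $\mathbb{V}$ is non-trivial as a class in $\mathrm{SB}_1(L/K)$, then $\mathbb{V}\not\cong\mathbb{P}^1$ over $K$, and a one-dimensional Severi-Brauer variety has a $K$-point if and only if it is isomorphic to $\mathbb{P}^1$ over $K$; therefore $\mathbb{V}(K)=\varnothing$ and $\mathcal{T}$ has no invariant visual limit.

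I expect the main obstacle to be bookkeeping rather than mathematics: one must verify carefully that the twisted $\mathcal{G}$-action on $\partial_\infty(\widehat{\mathcal{T}})$ produced by transporting the form really is the standard twisted action on $\mathbb{V}(L)$, so that the two notions of "invariant point" literally coincide and the fixed-point computation transfers verbatim. This uses the compatibility relation \eqref{eq comp actions}, $\sigma(\mu\bullet x)=\sigma(\mu)\bullet\sigma(x)$, to check that $\partial_\infty$ intertwines the twisted action on the tree with the twisted action on $\mathbb{P}^1(L)$; one also invokes that $\partial_\infty$ is functorial and equivariant as recorded at the end of \S\ref{section intro} and in Lemma \ref{lemma visual limits in tk}. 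The one genuinely external input is the standard fact that a one-dimensional Severi-Brauer variety splits over $K$ exactly when it has a rational point, which justifies the ``in particular'' clause; everything else reduces to the already-established dictionary between $\mathrm{SB}_1(L/K)$, $H^1(\mathcal{G},\mathrm{PGL}_2)$, and $\mathcal{E}(L/K)$.
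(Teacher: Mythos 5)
Your proposal is correct and is essentially the argument the paper intends: the lemma is stated with no written proof (marked as immediate) precisely because the preceding paragraph already identifies $\partial_\infty(\widehat{\mathcal{T}})$ with $\mathbb{V}(L)$ as a $\mathcal{G}$-set, so that invariant visual limits are $\mathbb{V}(L)^{\mathcal{G}}=\mathbb{V}(K)$ by descent, and the non-trivial case follows from the classical fact that a one-dimensional Severi-Brauer variety with a $K$-point is isomorphic to $\mathbb{P}^1$ over $K$. Your write-up simply makes these identifications explicit, which matches the paper's route exactly.
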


Denote by $\mathfrak{Alg}[L]$ the category of $L$-algebras.
Let $\mathfrak{A}$ be a $4$-dimensional central simple $K$-algebra.
Then, for any even degree extension $L$ of $K$ we have
$\mathfrak{A}_L \cong \mathbb{M}_2(L)$.
Let $\mathrm{Au}$ be the algebraic group whose group of $L$ points is
$\mathrm{Au}(L)=\mathrm{Aut}_{\mathfrak{Alg}[L]}\big(\mathbb{M}_2(L)\big)$.
Note that there is an isomorphism of algebraic groups 
$\mathrm{Au}\cong\mathrm{PGL}_2$, defined over $K$, 
but the elements of $\mathrm{Au}(L)$ are usually denoted as 
functions in the sequel to avoid confusion.
Every function $f\in\mathrm{Au}(L)$ corresponds to
a matrix $t_f$ in a way that $f(x)=t_fxt_f^{-1}$ for
$x\in\mathbb{M}_2(L)$. Note that two matrices defining
the same element in $\mathrm{PGL}_2(L)$ correspond to 
the same function. We make frequent use of this explicit
isomorphism in what follows.  Furthermore, the set 
$H^1\big(\mathcal{G}, \mathrm{Au}\big)\cong
H^1\big(\mathcal{G}, \mathrm{PGL}_2\big)$ is in 
correspondence with the pointed set of isomorphism classes of 
$K$-quaternion algebras $\mathfrak{A}$ that become isomorphic 
to $\mathbb{M}_2(L)$ over $L$.
For any such algebra $\mathfrak{A}$, there exists a variety
$\mathrm{Triv}=\mathrm{Hom}(\mathfrak{A},\mathbb{M}_2)$ 
whose set of $E$-points
is the set of homomorphisms, in $\mathfrak{Alg}[E]$, 
from $\mathfrak{A}_E$ 
to $\mathbb{M}_2(E)$, for any extension
$E$ of $K$. This variety always has an $L$ point,
but $\mathrm{Triv}(K)=\varnothing$, unless 
$\mathfrak{A}\cong\mathbb{M}_2(K)$.

\begin{Example}
Let $\mathbb{V}$ be the Severi-Brauer variety associated
to a class in $H^1(\mathcal{G}, \mathrm{PGL}_2)$. Let
$\mathfrak{A}$ be the algebra defined by the corresponding class
in $H^1(\mathcal{G}, \mathrm{Au})$. 
For each intermediate extension $K \subseteq F \subseteq L$, 
the set $\mathbb{V}(F)$ 
can be interpreted as the set of homothety classes of
nilpotent elements in $\mathfrak{A}_F = \mathfrak{A} \otimes_K F$.
\end{Example}

In what follows, we refer to the points of $\mathrm{Triv}$
as the trivializations of the algebra.
Note that $\mathcal{G}$ acts on the set 
$\mathrm{Triv}(L)$ in a way that 
$\sigma\big(f(\mathtt{q})\big)=
{}^\sigma f\big(\sigma(\mathtt{q})\big)$, 
for all $\mathtt{q}\in \mathfrak{A}_L$.
Equivalently, the action on $\mathrm{Triv}(L)$ 
is defined by
${}^\sigma f(\mathtt{q})=
\sigma\Big(f\big(\sigma^{-1}(\mathtt{q})\big)\Big)$.
Fix a trivialization $f\in \mathrm{Triv}(L)$, i.e.,
$f: \mathfrak{A}_L \to \mathbb{M}_2(L)$ is an isomorphism.
It is straightforward that the map
$a: \mathcal{G} \to \mathrm{Au}(L)$, 
 defined by $\sigma \mapsto f\circ \left( {}^{\sigma} f \right)^{-1}$,
 is a $1$-cocycle. Since $\mathrm{PGL}_2(L)$ embeds in the group
$\mathrm{Simp}(\mathcal{T}_L)$, the map $a$ can be regarded as a
$1$-cocycle with values in the latter.
In particular, its class $\bar{a}=[a]$ defines an element in 
$ H^1(\mathcal{G}, \mathrm{Simp}(\mathcal{T}_L))$.
If we choose another trivialization $f': \mathfrak{A}_L \to \mathbb{M}_2(L)$, 
then the associated cocycle 
$a': \mathcal{G} \to \mathrm{Au}(L)$ 
is cohomologous to $a$.
Thus, $[a]=[a']$ as an element in 
$H^1(\mathcal{G}, \mathrm{Au})\cong H^1(\mathcal{G}, \mathrm{PGL}_2)$.
The same holds, therefore, for its image in
$H^1\big(\mathcal{G}, \mathrm{Simp}(\mathcal{T}_L)\big)$.

\begin{Lemma}\label{lemma injective arrow for forms}
Assume that $K$ is a local field and let $L/K$ be a even degree 
finite extension. Then, the image of the map 
$\theta : \mathrm{SB}_1(L/K)\to \mathcal{E}(L/K)$ described in the 
preceding paragraph is $\left\lbrace [\mathcal{T}_L], 
[\widehat{\mathcal{T}}_{L}] \right\rbrace$,
where $\widehat{\mathcal{T}}_L$ is a $L/K$-form of 
$\mathcal{T}_L$ without $\mathcal{G}$-invariant visual limits,
which corresponds to the unique quaternion division algebra defined 
over $K$. In particular $\theta$ is injective. 
\end{Lemma}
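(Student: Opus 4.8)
The plan is to recognize $\theta$ as a \emph{pointed} map between two pointed sets and to compute its image by counting the source and separating the two possible targets by means of the isomorphism invariant $\partial_\infty$. First I would recall that, under the identifications $\mathrm{SB}_1(L/K)\cong H^1(\mathcal{G},\mathrm{PGL}_2)\cong H^1(\mathcal{G},\mathrm{Au})$, the source of $\theta$ is the pointed set of $K$-quaternion algebras that are split by $L$. Since $K$ is a local field, its Brauer group has a unique element of order two, so there are exactly two quaternion algebras over $K$: the split algebra $\mathbb{M}_2(K)$ and the unique division algebra $\mathfrak{A}$. Because $L/K$ has even degree, both are split by $L$ (the Hasse-invariant argument recalled in \S\ref{section intro}), so $\mathrm{SB}_1(L/K)$ consists of exactly two classes, namely $[\mathbb{P}^1]$ and the class $[\mathbb{V}]$ of the conic attached to $\mathfrak{A}$. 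As $\theta$ is induced by the morphism of $\mathcal{G}$-groups $\mathrm{PGL}_2(L)\hookrightarrow\mathrm{Simp}(\mathcal{T}_L)$, it is pointed, so $\theta([\mathbb{P}^1])=[\mathcal{T}_L]$, and it remains only to analyze the image of $[\mathbb{V}]$.

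For this I would apply Lemma \ref{lemma visual limits as severi-brauer varieties} to both forms simultaneously. For the trivial form $\mathcal{T}_L$ the associated variety is $\mathbb{P}^1$, and $\mathbb{P}^1(K)\neq\varnothing$, so $\mathcal{T}_L$ has $\mathcal{G}$-invariant visual limits; concretely, the standard Galois action fixes every point of $\mathbb{P}^1(K)\subseteq\mathbb{P}^1(L)$. Writing $\widehat{\mathcal{T}}_L=\mathcal{T}(\mathbb{V})$ for the form attached to $\mathfrak{A}$, the same lemma identifies its $\mathcal{G}$-invariant visual limits with the set $\mathbb{V}(K)$ of $K$-points. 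Since $\mathfrak{A}$ is a division algebra it contains no nonzero nilpotent element, equivalently $\mathrm{Triv}(K)=\varnothing$, so $\mathbb{V}(K)=\varnothing$ and hence $\widehat{\mathcal{T}}_L$ has no $\mathcal{G}$-invariant visual limit, exactly as asserted.

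Finally I would deduce injectivity from the observation that an isomorphism of $L/K$-forms is, by definition, a $\mathcal{G}$-equivariant simplicial homeomorphism, hence induces a $\mathcal{G}$-equivariant bijection on visual limits; in particular the existence of a $\mathcal{G}$-invariant visual limit is an isomorphism invariant of a form. Since $\mathcal{T}_L$ possesses such a limit while $\widehat{\mathcal{T}}_L$ does not, the classes $[\mathcal{T}_L]$ and $[\widehat{\mathcal{T}}_L]$ are distinct in $\mathcal{E}(L/K)$. Thus $\theta$ carries the two-element set $\mathrm{SB}_1(L/K)$ bijectively onto $\left\lbrace [\mathcal{T}_L],[\widehat{\mathcal{T}}_L]\right\rbrace$, and in particular it is injective.

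The only genuinely delicate point I anticipate is confirming that this coarse $\partial_\infty$-criterion actually separates the two forms, rather than some finer feature of the simplicial action being required; this is precisely what Lemma \ref{lemma visual limits as severi-brauer varieties} supplies, reducing the whole matter to the elementary arithmetic input that the conic of a quaternion division algebra has no rational point over $K$. Everything else is bookkeeping: the source has exactly two elements, the map is pointed, and the invariant visual limits distinguish the images.
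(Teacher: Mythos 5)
Your proof is correct and follows essentially the same route as the paper's: count $\mathrm{SB}_1(L/K)$ as a two-element set via the classification of quaternion algebras over a local field split by an even-degree extension, then invoke Lemma \ref{lemma visual limits as severi-brauer varieties} to see that the form attached to the division algebra has no $\mathcal{G}$-invariant visual limit, so the two images are non-isomorphic and $\theta$ is injective. The only difference is that you spell out some points the paper leaves implicit (that $\theta$ is pointed, that $\mathbb{V}(K)=\varnothing$ via the absence of nilpotents, and that invariant visual limits are an isomorphism invariant of forms), which is harmless elaboration of the same argument.
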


\begin{proof}
Recall that either, the set of isomorphism classes of quaternion $K$-algebras splitting at $L$ and $\mathrm{SB}_1(L/K)$ are in a pointed bijection with
the cohomology set $H^1(\mathcal{G}, \mathrm{PGL}_2)$.
Note that, when $K$ is a local field,
there exists precisely two non-isomorphic quaternion $K$-algebras
and both are split over $L$, since $[L:K] \in 2\mathbb{Z}$
\cite[\S XIII, Cor. 3]{Serre-localfields}.
We conclude that $\mathrm{SB}_1(L/K)$ has precisely two elements.
It suffices to observe that, for the non-trivial element,
the associated $L/K$-form $\widehat{\mathcal{T}}_L$ has no 
$\mathcal{G}$-invariant visual limit according to Lemma 
\ref{lemma visual limits as severi-brauer varieties},
whence $\widehat{\mathcal{T}}_L$ and $\mathcal{T}_L$ are not isomorphic.
\end{proof}

\begin{Example}
 Lemma \ref{lemma injective arrow for forms} actually holds  
 for any discretely valued field $K$, as long as
 the $2$-torsion $\mathrm{Br}(K)_2$ of the Brauer group 
 $\mathrm{Br}(K)$ is isomorphic to $\mathbb{Z}/2\mathbb{Z}$.
It follows from the exact sequence:
$$ 0 \to \mathrm{Br}(\mathbb{K}) \to \mathrm{Br}(K) \to H^1\big(\mathrm{Gal}
(\bar{\mathbb{K}}/\mathbb{K}), \mathbb{Q}/\mathbb{Z}\big) \to 0,$$
that the condition on $\mathrm{Br}(K)$ holds, for instance,
when $\mathrm{Br}(\mathbb{K})_2=\lbrace 0\rbrace$ and 
$\mathrm{Gal}(\bar{\mathbb{K}}/\mathbb{K})\cong \widehat{\mathbb{Z}}$.
This is the case of $K=k((t))((x))$ with the $x$-adic valuation, when 
$\mathrm{char}(k)=0$ and $\overline{k}=k$, according to \cite[Cor. of 
Prop. 8, \S IV.2]{Serre-localfields}.
\end{Example}

In the sequel, we let $L/K$ be an arbitrary finite 
extension of local fields,
and we write $\mathcal{G}_E:=\mathrm{Gal}(E/K)$, for each 
intermediate Galois extension $K \subseteq E \subseteq L$.
Since $\mathcal{G}_{L/E}:=\mathrm{Gal}(L/E)$ is 
normal in $\mathcal{G}_L$ and
$\mathcal{G}_E \cong \mathcal{G}_L/\mathcal{G}_{L/E}$
via the restriction map,
for any algebraic group $G$ there exists an inflation map
$$\mathrm{Inf}: H^1(\mathcal{G}_E,G) \to H^1(\mathcal{G}_L,G).$$
However, we cannot define an inflation map from
$H^1\big(\mathcal{G}_E, \mathrm{Simp}(\mathcal{T}_E)\big)$ to
$H^1\big(\mathcal{G}_L, \mathrm{Simp}(\mathcal{T}_L)\big)$,
since we have no natural way to identify
 $\mathrm{Simp}(\mathcal{T}_E)$ with a subset of
  $\mathrm{Simp}(\mathcal{T}_L)$. In spite of that, since 
  the map $\theta$ is injective, we can still define an 
  inflation map that associates an
  $L/K$-form to every $E/K$-form defined from a Severi-Brauer variety,
  by passing through the set $H^1(\mathcal{G}_E,\mathrm{Au})\cong 
  H^1(\mathcal{G}_E,\mathrm{PGL}_2)$.

  When $\mathcal{T}'$ is a $E/K$-form such that 
  $\left[\mathcal{T}' \right]=\theta([\mathbb{V}])$ 
  we can assume that the Galois action in 
  $\mathcal{T}'$ is given by 
  $\tau * x= a_{\tau}\big(\tau(x)\big)$, where $a_{\tau}$
  is a simplicial map on $\mathcal{T}_E$
  , defined by a Moebius transformation,
  and $\tau\mapsto a_\tau$ is a cocycle.
  As such, we can define a cocycle 
  $(\tilde{a}_s)_{s \in \mathcal{G}_L}\in
  Z^1\big(\mathcal{G}_L, \mathrm{Simp}(\mathcal{T}_L)\big)$,
  where $\tilde{a}_s$ is the simplicial map 
  on $\mathcal{T}_L$ defined
  by the same Moebius transformation as $a_{\bar{s}}$,
  where $\bar{s}\in\mathcal{G}_E $
  is the restriction of $s\in\mathcal{G}_L$.
  If we assume $x\in \mathcal{T}_E\subseteq\mathcal{T}_L$, 
  so that $\tau(x)$ depends only
  on the class $\bar{\tau}$ of $\tau$ in $\mathcal{G}_E$, 
  the element $\bar{\tau}*x\in T_E$ coincides with
  $\tau*x\in T_L$. We can, therefore, consider $\mathcal{T}'$
  as a subset of the corresponding $L/K$-form, which we denote 
  $\widetilde{\mathrm{Inf}}\left(\mathcal{T}'\right)$.
  Furthermore, since $a_{\bar{s}}$ is trivial for
  $s\in\mathcal{G}_{L/E}$, then the action of
  $\mathcal{G}_{L/E}$ on 
  $\widetilde{\mathrm{Inf}}\left(\mathcal{T}\right)$
  coincides with the classical action, i.e., $\mathcal{T}'$
  can be recovered as the minimal sub-tree of
  $\widetilde{\mathrm{Inf}}\left(\mathcal{T}\right)$ containing 
  every $\mathcal{G}_{L/E}$-invariant visual limit.
  Now, assume $E/K$ has even degree, and let 
  $\widehat{\mathcal{T}}=\widehat{\mathcal{T}}_E$ be the form 
  defined by the unique division algebra, as described in 
  Lemma \ref{lemma injective arrow for forms}. Since 
  $\widehat{\mathcal{T}}_E$ has no invariant visual limit, then 
  neither do $\widehat{\mathcal{T}}_L$ for any Galois extension 
  $L/K$ with $E\subseteq L$. We conclude that
  $\widetilde{\mathrm{Inf}}\big(\widehat{\mathcal{T}}_E\big)$
  is isomorphic to $\widehat{\mathcal{T}}_L$. In particular, 
  next result is immediate: 

\begin{Lemma}\label{lemma embb of non-trivial forms}
Let $K$ be a local field, and consider even degree
Galois extensions $E/K$ and $L/K$ with $ E \subseteq L$.
Let $\widehat{\mathcal{T}}_L$ be as in Lemma 
\ref{lemma injective arrow for forms}. Then the space $\widehat{\mathcal{T}}_E$ is naturally
isomorphic to the  minimal subtree of $\widehat{\mathcal{T}}_L$ 
containing every $\mathcal{G}_{L/E}$-invariant visual limit. \qed
\end{Lemma}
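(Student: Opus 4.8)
The plan is to read the statement off the $\widetilde{\mathrm{Inf}}$ construction carried out in the paragraphs just above, so that the argument is essentially bookkeeping. First I would fix the unique quaternion division algebra $\mathfrak{A}$ over $K$ and let $\mathbb{V}$ denote its associated Severi-Brauer variety. Applying Lemma \ref{lemma injective arrow for forms} to the even degree extension $E/K$, the form $\widehat{\mathcal{T}}_E$ is, up to isomorphism, the $E/K$-form $\mathcal{T}'$ with $[\mathcal{T}'] = \theta([\mathbb{V}])$ the non-trivial class in the image of $\theta$ for $E/K$. Its Galois action is $\tau * x = a_{\tau}(\tau(x))$, where $\tau \mapsto a_{\tau}$ is a cocycle valued in Moebius transformations on $\mathcal{T}_E$, exactly as in the construction above.

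Next I would apply $\widetilde{\mathrm{Inf}}$ to $\widehat{\mathcal{T}}_E$, producing the $L/K$-form defined by the cocycle $(\tilde{a}_s)_{s\in\mathcal{G}_L}$, where $\tilde{a}_s$ is the simplicial map on $\mathcal{T}_L$ given by the same Moebius transformation as $a_{\bar s}$ and $\bar s \in \mathcal{G}_E$ is the restriction of $s$. Since this cocycle is by construction the inflation of $(a_\tau)$ along the restriction $\mathcal{G}_L \twoheadrightarrow \mathcal{G}_E$, its class in $H^1(\mathcal{G}_L,\mathrm{Au}) \cong H^1(\mathcal{G}_L,\mathrm{PGL}_2)$ is the image of $[\mathfrak{A}]$ under the inflation map, i.e.\ the class of the same algebra $\mathfrak{A}$ now regarded as split over $L$. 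As $\mathfrak{A}$ remains the unique division algebra over $K$, Lemma \ref{lemma injective arrow for forms} applied to $L/K$ identifies this with $[\widehat{\mathcal{T}}_L]$, yielding a $\mathcal{G}_L$-equivariant simplicial homeomorphism $\widetilde{\mathrm{Inf}}(\widehat{\mathcal{T}}_E) \cong \widehat{\mathcal{T}}_L$. Equivalently, and more self-containedly, a $\mathcal{G}_L$-invariant visual limit of $\widetilde{\mathrm{Inf}}(\widehat{\mathcal{T}}_E)$ would by Lemma \ref{lemma visual limits as severi-brauer varieties} produce a $K$-point of $\mathbb{V}$, contradicting that $\mathfrak{A}$ is non-split; hence this form has no invariant visual limit and therefore represents the non-trivial class.

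Finally I would recover $\widehat{\mathcal{T}}_E$ inside $\widetilde{\mathrm{Inf}}(\widehat{\mathcal{T}}_E)$ as already observed above: because $a_{\bar s}$ is trivial for every $s \in \mathcal{G}_{L/E}$, the subgroup $\mathcal{G}_{L/E}$ acts through the classical untwisted Galois action, so its invariant visual limits are exactly the points of $\mathbb{P}^1(E) \subseteq \mathbb{P}^1(L)$. The minimal subtree these span is the embedded copy of $\mathcal{T}_E$, using the identification $\partial_{\infty}(\mathcal{T}_E) = \mathbb{P}^1(E)$ from Lemma \ref{lemma visual limits in tk} together with the closed embedding $\mathcal{T}_E \subseteq \mathcal{T}_L$ of \S\ref{subsection ext BTT}. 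Transporting this description along the isomorphism of the previous step gives the asserted identification of $\widehat{\mathcal{T}}_E$ with the minimal subtree of $\widehat{\mathcal{T}}_L$ containing every $\mathcal{G}_{L/E}$-invariant visual limit.

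I expect the only point requiring genuine care to be the compatibility in the middle step, namely that $\widetilde{\mathrm{Inf}}$ really realizes the cohomological inflation of the $\mathrm{PGL}_2$-class, so that $\widetilde{\mathrm{Inf}}(\widehat{\mathcal{T}}_E)$ corresponds to the same algebra $\mathfrak{A}$ rather than to some other form. This is precisely what the injectivity of $\theta$ and the uniqueness of the division algebra over a local field are designed to guarantee, so once the inflation compatibility is made explicit the remainder is formal.
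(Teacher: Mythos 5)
Your proposal is correct and follows essentially the same route as the paper: the lemma is precisely the conclusion of the paper's $\widetilde{\mathrm{Inf}}$ construction, in which $\widetilde{\mathrm{Inf}}\big(\widehat{\mathcal{T}}_E\big)$ is identified with $\widehat{\mathcal{T}}_L$ via the absence of $\mathcal{G}_L$-invariant visual limits together with the injectivity of $\theta$, and $\widehat{\mathcal{T}}_E$ is recovered as the minimal subtree spanned by the $\mathcal{G}_{L/E}$-invariant visual limits because the cocycle is trivial on $\mathcal{G}_{L/E}$. Your explicit verification that $\widetilde{\mathrm{Inf}}$ realizes cohomological inflation (so the twisted form still corresponds to $\mathfrak{A}$, hence to the non-trivial class) is exactly the compatibility the paper leaves implicit, not a different argument.
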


\subparagraph{Forms of $\mathcal{T}_L$ not defined from 
quaternion algebras:}\label{subsection forms and quad alg}

It is rather intuitive that the map 
$\theta$ defined in Lemma \ref{lemma injective arrow for forms} fails 
to be surjective. It is relatively easy to construct forms
$\mathcal{T}'$ of $\mathcal{T}_L$ whose set of invariant visual 
limits is finite. The corresponding isomorphism classes
$[\mathcal{T}']$ are not 
defined by Severy-Brauer varieties in $\mathrm{SB}_1(L/K)$, 
via $\theta$, according to Lemma 
\ref{lemma injective arrow for forms}. 
Here we present two examples of these constructions.

\begin{Example}\label{ex unramified}
Let $K$ be a non-dyadic local field and let $L/K$ be an unramified 
quadratic extension. Let us write 
$\mathcal{G}=\mathrm{Gal}(L/K)=\lbrace \mathrm{id}, \sigma \rbrace$,
which identifies canonically with the Galois group of the
corresponding extension $\mathbb{L}/\mathbb{K}$ of residual fields.
Since the cardinality $|\mathbb{K}^{*}|$ is even, there exists a
involution map $h: \mathbb{K}^{*} \to \mathbb{K}^{*}$ 
without fixed points. Let $v_n=B_{0,n}$ be an arbitrary vertex 
of $\mathcal{P}_L(0,\infty)$. The $1$-star $\mathcal{S}_n$ of 
$v_n$ is the minimal subtree of $\mathcal{T}_L$ containing $v_n$ 
and all its neighbors. Note that the vertex set $V(\mathcal{S}_n) 
\smallsetminus\{v_n\}$ is in bijection with 
$\mathbb{P}^1(\mathbb{L})= \mathbb{K}^*\cup (\mathbb{L} 
\smallsetminus \mathbb{K}) \cup \lbrace 0,\infty \rbrace$,
with the element $\lambda \in \mathbb{P}^1(\mathbb{L})$ 
corresponding to 
a vertex $w_\lambda$.
Hence, there exists a simplicial map 
$f_n: \mathcal{S}_n \to \mathcal{S}_n$ such that:
\begin{itemize}
    \item $f_n $ fixes point-wisely the vertex set corresponding to 
    $\mathbb{L} \smallsetminus \mathbb{K} \cup 
    \lbrace 0,\infty \rbrace$,
    \item $f_n $ moves every vertex corresponding to an element
    in $\mathbb{K}^*$, in a way that 
    $f_n(w_\lambda)=w_{h(\lambda)}$.
\end{itemize}
Let $\mathfrak{c}$ be the caterpillar subtree containing precisely the 
vertices at distance $1$ from $\mathcal{P}_L(0,\infty)$,
let $C\subseteq T_L$ be its associated topological space,
and $\mathcal{C}=(\mathfrak{c},C)$ the associated topological graph.
Since $f_n$ fixes the vertices $w_{0}$ and $w_{\infty}$, there exists 
a simplicial map $f_\mathcal{C}: \mathcal{C} \to \mathcal{C}$ such that
$f_\mathcal{C}\lvert_{\mathcal{S}_n}=f_n$, for all $n\in \mathbb{Z}$. 
Since $f_n^2=\mathrm{id}$ for all $n$, the map $f_\mathcal{C}$ is 
also an involution. Furthermore since $f_n$ fixes $w_\lambda$, for 
$\lambda \in \mathbb{L} \smallsetminus \mathbb{K}$, 
we have ${}^{\sigma} f_n=f_n$, whence 
${}^{\sigma} f_\mathcal{C}=f_\mathcal{C}$.
For each leaf $w$ in the caterpillar, we let $\mathcal{T}_w$
be the corresponding connected component of the graph obtained
by removing $\mathcal{P}_L(0,\infty)$ and its adjacent edges.
To extend $f_\mathcal{C}$ to a $\mathcal{G}$-invariant involution 
$f:\mathcal{T}_L\rightarrow \mathcal{T}_L$, we can define it as 
the identity in every tree $\mathcal{T}_w$ for 
$w=w_\lambda\in V(\mathcal{S}_n)$
satisfying $\lambda\in \mathbb{L}\smallsetminus\mathbb{K}$,
and choosing a $\mathcal{G}$-invariant simplicial bijection between 
the trees $\mathcal{T}_w$ and $\mathcal{T}_{w'}$ for every pair
of leaves $w$ and $w'$ that $f_\mathcal{C}$ permutes.
Thus defined, $f$ induces a $1$-cocycle in 
$\mathrm{Simp}(\mathcal{T}_L)$ such that $f_{\sigma}=f$. 
Note that this is possible since $f^2=\mathrm{id}$, which is consistent
with the condition $f_1=f_{\sigma^2}= f_\sigma{}^\sigma(f_\sigma)$.
In particular,  this cocycle corresponds to a 
non-trivial form $\mathcal{T}'$ of $\mathcal{T}_L$.
Moreover, the action of $\mathcal{G}$ on $\mathcal{T}'$
is given by the twisted formula $\tau *x:= f_{\tau}(\tau x)=f(\tau x)$,
for all $\tau \in \mathcal{G}$ and all $x\in \mathcal{T}_L$, 
according to \cite[\S I.5.3]{Serre-CohomologieGaloisienne}.
Note that the $K$-ghost neighboring vertices of $v_n$ are fixed by $f$, 
but permuted by $\mathcal{G}$, 
while that the neighboring $K$-vertices of $v_n$, 
different from $w_0, w_\infty$, are permuted by $f$ and trivially
fixed by $\mathcal{G}$.
Thus, the unique $\mathcal{G}$-fixed visual limits of 
$\mathcal{T}'$ are $\lbrace 0, \infty \rbrace$.
\end{Example}

\begin{Example}\label{ex ramified}
Let $K$ a non-dyadic local field and let $L/K$ be a ramified 
quadratic extension.
Here we keep and extend the notations of Ex. \ref{ex unramified}.
In particular, for each vertex $v_{n/2}=B_{0,n/2}$ in 
$\mathcal{T}_L$ we denote by $\mathcal{S}_{n/2}$ its $1$-star.
Since $V(\mathcal{S}_{n/2}) \smallsetminus \lbrace v_{n/2} \rbrace$ is
in bijection with $\mathbb{P}^1(\mathbb{L})=\mathbb{P}^1(\mathbb{K})=
\mathbb{K}^*\cup \lbrace 0, \infty \rbrace$, there exist simplicial
maps $f_n:\mathcal{S}_{n/2} \to \mathcal{S}_{n/2} $ such that:
\begin{itemize}
   \item for $n \in 2\mathbb{Z}+1$, $f_n $ fixes point-wisely 
   $\mathbb{P}^1(\mathbb{K})$, and
   \item for $n\in 2\mathbb{Z}$, $f_n$ fixes point-wisely $w_0$ and
   $w_\infty$, while it moves every vertex corresponding to an element
    in $\mathbb{K}^*$, in a way that 
    $(f_n)_V(w_\lambda)=w_{h(\lambda)}$, where $h$ is a above.
\end{itemize}
Thus, by reasoning as in Ex. \ref{ex unramified}, we can glue the 
preceding local simplicial maps, in order to obtain a constant 
$1$-cocycle $f:\mathcal{G} \to \mathrm{Simp}(\mathcal{T}_L)$.
Again, this defines an $L/K$-form $\mathcal{T}'$ whose invariant 
visual limit is $\lbrace 0,\infty \rbrace$. 
\end{Example}

\section{Branches in twisted forms of Bruhat-Tits trees}\label{section branches in galois forms}

In all of this section, we fix a Galois extension $L/K$ of local fields
with Galois group $\mathcal{G}=\mathrm{Gal}(L/K)$ and we let
$\mathfrak{A}$ be the unique quaternion algebra 
defined over $K$. As before, we assume that $\mathfrak{A}$ 
splits over $L$, or, equivalently, we assume that the order 
$|\mathcal{G}|$ is even \cite[\S XIII, Cor.~3]{Serre-localfields}.
Let $[a]\in H^1(\mathcal{G},\mathrm{Au})$ be the class 
corresponding to $\mathfrak{A}$, whose image in 
$H^1\big(\mathcal{G},\mathrm{Simp}(\mathcal{T}_L)\big)$ 
corresponds to the $L/K$-form $\widehat{\mathcal{T}}_L$
described in the preceding section.

Let $\mathfrak{H}$ be an arbitrary quaternion order in $\mathfrak{A}$.
In the sequel, we identify $\mathfrak{H}$ with its image 
$\mathfrak{H} \otimes_{\mathcal{O}_K} \mathcal{O}_K
\subseteq\mathfrak{A}_L$. 
In particular, an $\mathcal{O}_L$-order
in $\mathfrak{A}_L$ contains $\mathfrak{H}$ if and only if
it contains $\mathfrak{H}_L=\mathcal{O}_L\mathfrak{H}$.
Furthermore, if we fix a trivialization
$f: \mathfrak{A}_L\to \mathbb{M}_2(L)$, then we also identify
$\mathfrak{H}$ with its image $f(\mathfrak{H})\subseteq\mathbb{M}_2(L)$. 
The map $a_{\tau}=f \circ ({}^{\tau}f )^{-1}\in 
\mathrm{Au}(L)$, defines a representative
of the class $[a]$ corresponding to $\mathfrak{A}$, as above.
The correspondence between cohomology classes and algebras
is such that the Galois action on $\mathfrak{A}_L$ can be defined
by the formula $f\big(\tau(\mathtt{q})\big)=
a_\tau \Big(\tau\big(f(\mathtt{q})\big)\Big)$,
for every $\mathtt{q}\in \mathfrak{A}_L$.
In particular, if we associate to every maximal order $\mathfrak{D}$
in $\mathfrak{A}_L$ the vertex 
$v_{\mathfrak{D}}:=v_{f(\mathfrak{D})}$,
then this association is consistent with the twisted 
Galois action in 
the following sense:
\begin{equation}\label{aconsp}
\tau* v_{\mathfrak{D}}=v_{\tau(\mathfrak{D})}.
\end{equation}
If $f$ is replaced by a different trivialization $g=h\circ f$, 
where $h\in \mathrm{Au}(L)$, i.e., $h$ is an automorphism of 
the matrix algebra defined over $L$, the vertex
$v_{f(\mathfrak{D})}$ is replaced with
$v_{g(\mathfrak{D})}=\gamma \bullet v_{f(\mathfrak{D})}$,
where $\gamma$ is the Moebius transformation corresponding
to $h$, but the structure as a simplicial complex remains invariant.
Furthermore, the relation (\ref{aconsp}), tells us that
the $\mathcal{G}$-set structure is also independent of this choice,
in the sense that $\tau\star v_{g(\mathfrak{D})}=
\gamma\bullet \left(\tau*v_{g(\mathfrak{D})}\right)$,
when $\star$ denotes the twisted action corresponding to $g$.
With these identifications, for
any maximal order $\mathfrak{D}\subseteq\mathfrak{A}_L$,
the vertex $v_{\mathfrak{D}}$
is well defined as a vertex of the twisted tree
$\widehat{\mathcal{T}}_L$.

For any order $\mathfrak{H}\subseteq\mathfrak{A}$, we define its branch
by $\mathcal{S}_{L,f}(\mathfrak{H})=
\mathcal{S}_L\big(f(\mathfrak{H}_L)\big)$. This depends on the 
trivialization $f$, but it is well defined as a subgraph
of the topological tree $\widehat{\mathcal{T}}_L$, in the sense
described above. For this reason, we often omit $f$ from the notation 
and write simply $\mathcal{S}_L(\mathfrak{H})$. Similar conventions
apply to individual quaternions. The branch denoted by
$\mathcal{S}_{L,f}(\mathtt{q})$ or $\mathcal{S}_L(\mathtt{q})$ 
is the branch $\mathcal{S}_L(\mathbf{q})$ where
$\mathbf{q}=f(\mathtt{q})$.

\begin{Lemma}
For each order $\mathfrak{H} \subset \mathfrak{A}$, we have 
$\tau* \mathcal{S}_L(\mathfrak{H}) = \mathcal{S}_L(\mathfrak{H})$ 
for every $\tau\in\mathcal{G}$.
\end{Lemma}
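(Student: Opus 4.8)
The plan is to reduce the assertion to the level of vertices via the compatibility relation (\ref{aconsp}), exploiting that $\mathfrak{H}$ is defined over $K$ and is therefore fixed by the Galois action on $\mathfrak{A}_L$. First I would recall that, by the definition of the branch, the vertex set $V\big(\mathcal{S}_L(\mathfrak{H})\big)$ consists exactly of the vertices $v_{\mathfrak{D}}$ attached to those maximal $\mathcal{O}_L$-orders $\mathfrak{D}\subseteq\mathfrak{A}_L$ that contain $\mathfrak{H}$, equivalently $\mathfrak{H}_L=\mathcal{O}_L\mathfrak{H}$. Identifying $\mathfrak{H}$ with $\mathfrak{H}\otimes 1\subseteq\mathfrak{A}\otimes_K L=\mathfrak{A}_L$, the group $\mathcal{G}$ acts only on the second tensor factor, so each $\tau\in\mathcal{G}$ fixes $\mathfrak{H}$ pointwise; since $\tau(\mathcal{O}_L)=\mathcal{O}_L$, it follows that $\tau(\mathfrak{H}_L)=\tau(\mathcal{O}_L)\tau(\mathfrak{H})=\mathfrak{H}_L$ as well.

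Next I would observe that every $\tau\in\mathcal{G}$ acts on $\mathfrak{A}_L$ as a (necessarily $\tau|_L$-semilinear) ring automorphism carrying $\mathcal{O}_L$ onto itself. Consequently $\tau$ permutes the maximal $\mathcal{O}_L$-orders of $\mathfrak{A}_L$ and preserves inclusions of orders. Combining this with the previous paragraph, $\mathfrak{H}\subseteq\mathfrak{D}$ holds if and only if $\mathfrak{H}=\tau(\mathfrak{H})\subseteq\tau(\mathfrak{D})$, so $\mathfrak{D}\mapsto\tau(\mathfrak{D})$ permutes the set of maximal orders containing $\mathfrak{H}$. Invoking (\ref{aconsp}), namely $\tau* v_{\mathfrak{D}}=v_{\tau(\mathfrak{D})}$, this says precisely that the twisted action $\tau*$ permutes the vertex set $V\big(\mathcal{S}_L(\mathfrak{H})\big)$.

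Finally, to upgrade this vertex-level invariance to the full topological subgraph, I would use that $\tau*$ is simplicial on $\widehat{\mathcal{T}}_L=\mathcal{T}_L$: by construction $\tau* x=a_{\tau}\big(\tau(x)\big)$ is the composite of the standard (simplicial) Galois action with the Moebius map $a_\tau$, hence a simplicial homeomorphism, which in particular preserves adjacency of vertices. Since $\mathcal{S}_L(\mathfrak{H})$ is the largest subgraph of $\mathcal{T}_L$ on its vertex set, i.e.\ the induced subgraph, any simplicial homeomorphism that preserves this vertex set carries the induced subgraph onto itself, yielding $\tau*\mathcal{S}_L(\mathfrak{H})=\mathcal{S}_L(\mathfrak{H})$. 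The only point requiring genuine care is this passage from vertices to the whole subgraph, namely confirming that $\tau*$ is simplicial and that the branch is recovered from its vertices as an induced subgraph; the Galois-invariance of $\mathfrak{H}$ itself is essentially formal once one records that the action fixes $\mathfrak{A}\otimes 1$ pointwise.
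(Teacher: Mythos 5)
Your proposal is correct and follows essentially the same route as the paper's proof: Galois-invariance of $\mathfrak{H}$ implies that $\tau$ permutes the maximal orders containing it, relation (\ref{aconsp}) transfers this to the vertex set of the branch, and simpliciality of the twisted action upgrades the vertex-level invariance to the whole subgraph. Your write-up is merely more explicit (spelling out the bijectivity and the induced-subgraph point) than the paper's brief argument.
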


\begin{proof}
Since $\mathcal{G}$ acts trivially on $\mathfrak{H}$, then, for every
maximal order $\mathfrak{D}$ in $\mathfrak{A}_L$ 
containing $\mathfrak{H}$
the order $\tau(\mathfrak{D})$ also contains $\mathfrak{H}$.
This implies that $\tau*v_{\mathfrak{D}} \in
\mathcal{S}_L(\mathfrak{H})$,  for every vertex 
$v_{\mathfrak{D}} \in \mathcal{S}_L(\mathfrak{H})$.
Since the $\mathcal{G}$-action on $\mathcal{T}_L$ is simplicial, 
the result follows.
\end{proof}

If $S_L(\mathfrak{H})$ is the topological space subjacent
to the branch $\mathcal{S}_L(\mathfrak{H})$,
we denote by $S_L^{\mathcal{G},*}(\mathfrak{H})$ the (point-wise)
$\mathcal{G}$-invariant subspace of $S_L(\mathfrak{H})$, which we call
in the sequel $\mathcal{G}$-invariant branch (or just the 
$\mathcal{G}$-branch) of $\mathfrak{H}$.
When $\mathfrak{H}=
\mathcal{O}_K[\mathtt{q}_1, \cdots ,\mathtt{q}_r]$, we write
$S_L^{\mathcal{G},*}(\mathtt{q}_1, \cdots, \mathtt{q}_r)$ instead of 
$S_L^{\mathcal{G},*}(\mathfrak{H})$. Note that these
invariant subspaces could, in principle, fail to define subgraphs.

Recall that, for every even degree
intermediate Galois 
extension $E/K$, we can identify $\widehat{\mathcal{T}}_E$ with
the minimal subtree containing the $\mathcal{G}_{L/E}$-invariant
visual limits, according to Lemma \ref{lemma embb of non-trivial forms}. 
If $f$ is a trivialization defined over $E$, for any maximal
order $\mathfrak{D}'$ in $\mathfrak{A}_E$ we identify
the vertex $v_{\mathfrak{D}'}$ in the tree $\widehat{\mathcal{T}}_E$
with the vertex $v_{\mathfrak{D}}$ in $\widehat{\mathcal{T}}_L$ corresponding
to the maximal order
$\mathfrak{D}=\mathfrak{D}'\otimes_{\mathcal{O}_E}\mathcal{O}_L$.
This is consistent with the previous definitions
and gives an interpretation of the embedding of $\widehat{\mathcal{T}}_E$
into $\widehat{\mathcal{T}}_L$ that is independent of the trivialization.
This is important since
 we might fail to have a trivialization
$f$ that is defined over every intermediate Galois extension
of even degree, since this condition
forces the trivialization to be defined over $K$,
since this is the intersection of the unramified
and ramified quadratic extensions.
By identifying $S_E(\mathfrak{H})$ (resp. 
$S_E^{\mathcal{G}_E,*}(\mathfrak{H})$) with its image via the map
just described, next result is straightforward:

\begin{Lemma}\label{lemma galois cont}
Under the preceding conventions, we have both
$S_E(\mathfrak{H}) \subseteq S_L(\mathfrak{H})$ and 
$S_E^{\mathcal{G}_E,*}(\mathfrak{H})\subseteq 
S_L^{\mathcal{G}_L,*}(\mathfrak{H})$.\qed
\end{Lemma}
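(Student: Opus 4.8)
The plan is to treat the two inclusions separately and to deduce the second from the first together with the compatibility of the twisted Galois actions. Since the whole point of the preceding paragraph was to set up the identification of $S_E(\mathfrak{H})$ with its image in $\widehat{\mathcal{T}}_L$, the work consists in checking that this image respects both the branch and its invariant locus.

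First I would establish $S_E(\mathfrak{H}) \subseteq S_L(\mathfrak{H})$ at the level of vertices. The vertices of $\mathcal{S}_E(\mathfrak{H})$ are the maximal orders $\mathfrak{D}'$ of $\mathfrak{A}_E$ containing $\mathfrak{H}_E$, and under the identification just described each such $v_{\mathfrak{D}'}$ becomes the vertex $v_{\mathfrak{D}}$ of $\widehat{\mathcal{T}}_L$ with $\mathfrak{D}=\mathfrak{D}'\otimes_{\mathcal{O}_E}\mathcal{O}_L=\mathcal{O}_L\mathfrak{D}'$. Since $\mathfrak{H}_L=\mathcal{O}_L\mathfrak{H}=\mathcal{O}_L\mathfrak{H}_E$, the inclusion $\mathfrak{D}'\supseteq\mathfrak{H}_E$ gives $\mathfrak{D}\supseteq\mathfrak{H}_L$, so $v_{\mathfrak{D}}$ is a vertex of $\mathcal{S}_L(\mathfrak{H})$. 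Thus every vertex of the $E$-branch lands in the $L$-branch.

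Next I would pass from vertices to the full topological space. Under the (isometric) embedding $\widehat{\mathcal{T}}_E\hookrightarrow\widehat{\mathcal{T}}_L$ each edge of $\widehat{\mathcal{T}}_E$ is carried to the geodesic of length $e(L/E)$ joining its two endpoints, now viewed as $L$-vertices. Here I would use that $\mathcal{S}_L(\mathfrak{H})$ is geodesically convex: as recalled in \S\ref{branches and Int rep}, every maximal order attached to a vertex of a line $\mathcal{P}_L[v,v']$ contains $\mathfrak{D}_v\cap\mathfrak{D}_{v'}$, so if both endpoints of a geodesic contain $\mathfrak{H}_L$ then so does every maximal order along it. Applying this to the endpoints of the image of each edge of $\mathcal{S}_E(\mathfrak{H})$ shows the entire connecting path lies in $\mathcal{S}_L(\mathfrak{H})$; as $S_E(\mathfrak{H})$ is the union of its closed edges and isolated vertices, its image is contained in $S_L(\mathfrak{H})$, which is the first inclusion.

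For the second inclusion I would invoke the inflation compatibility of the twisted actions. Let $x\in S_E^{\mathcal{G}_E,*}(\mathfrak{H})$; by the first part $x\in S_L(\mathfrak{H})$, so it remains to verify that $x$ is $\mathcal{G}_L$-invariant for the twisted action $*$ on $\widehat{\mathcal{T}}_L$. Because the cocycle defining $\widehat{\mathcal{T}}_L$ is the inflation of the one defining $\widehat{\mathcal{T}}_E$, for every $\tau\in\mathcal{G}_L$ with restriction $\bar\tau\in\mathcal{G}_E$ and every $x\in\widehat{\mathcal{T}}_E$ one has $\tau*x=\bar\tau*x$, the identity recorded just before Lemma \ref{lemma embb of non-trivial forms}. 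Since $\bar\tau*x=x$ by hypothesis, $\tau*x=x$ for all $\tau\in\mathcal{G}_L$, whence $x\in S_L^{\mathcal{G}_L,*}(\mathfrak{H})$. The order-theoretic containments are routine; the only step demanding genuine care is the passage from the vertex statement to the topological statement, i.e. confirming via convexity of the branch that the geodesic paths filling in the images of the $E$-edges remain inside $S_L(\mathfrak{H})$. Once the first inclusion is secured, the second follows immediately, so I expect no real obstacle beyond that convexity check.
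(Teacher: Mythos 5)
Your proof is correct and fills in exactly the argument the paper treats as immediate: the vertex-level inclusion via $\mathfrak{D}'\mapsto\mathfrak{D}'\otimes_{\mathcal{O}_E}\mathcal{O}_L$ (so $\mathfrak{H}_E\subseteq\mathfrak{D}'$ gives $\mathfrak{H}_L\subseteq\mathfrak{D}$), and the invariance statement via the inflation identity $\tau*x=\bar\tau*x$ recorded before Lemma \ref{lemma embb of non-trivial forms}. The convexity check that promotes the vertex inclusion to an inclusion of topological realizations is a sound (and welcome) elaboration of what the paper leaves implicit in declaring the lemma straightforward.
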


\begin{Example}\label{ex pelitos}
Let $\mathfrak{H}=\mathcal{O}_K[\mathrm{id}]$ and let $L/K$ be a 
wildly ramified extension.
By simplicity, assume that $E=K$ and $\mathfrak{A}=\mathbb{M}_2(K)$.
Then $S_E^{\mathcal{G}_E,*}(\mathfrak{H})= S_K(\mathfrak{H})=T_K$, 
while $S_L^{\mathcal{G}_L,*}(\mathfrak{H})=T_L^{\mathcal{G}_L}$.
Since $T_K$ is strictly contained in $T_K^{\mathcal{G}_L}$ according 
to \cite[\S2.6.1]{Titsreductivegroups}, \cite{Rousseau} or Prop. \ref{prop hairs}, 
the second contention in Lemma \ref{lemma galois cont} can be strict. 
\end{Example}

Having developed the theory up to this point, we have the tools 
to prove Ths. \ref{main teo 0} and \ref{main teo 0b}.

\begin{proof}[Proof of Th. \ref{main teo 0}]
Let us fix a trivialization $f: \mathfrak{A}_L \to \mathbb{M}_2(L)$, 
and assume $\widehat{\mathcal{T}}_L$ is defined by $f$.
Then, by Lemma \ref{lemma embb of non-trivial forms},
for every intermediate field $E$ splitting $\mathfrak{A}$,
we can identify the twisted form $\widehat{\mathcal{T}}_E$
with the minimal subtree of $\widehat{\mathcal{T}}_L$ containing
every $\mathcal{G}_{L/E}$-invariant visual limit. On the 
other hand, $\widehat{\mathcal{T}}_E$ is homeomorphic to the
Bruhat-Tits tree $\mathcal{T}_E$ over $E$, as it 
is a twisted form. Statement $(1)$ follows.
Next we focus on the proof of Statement $(2)$.
Indeed, we define $\mathcal{S}$ as the branch 
$\mathcal{S}_L(\mathfrak{H}_L)$ of $\mathfrak{H}_L$ in $\widehat{\mathcal{T}}_L$, as described above.
Note that, if $f(\mathfrak{H}_E) \subseteq \mathfrak{D}_E$,
then $f(\mathfrak{H}_L) \subseteq 
\mathfrak{D}_L:=\mathfrak{D}_E 
\otimes_{\mathcal{O}_E}\mathcal{O}_L$, whence 
$\mathfrak{D}_E$ defines a vertex in 
$\mathcal{S} \cap \widehat{\mathcal{T}}_E$.
Conversely, any vertex in $\mathcal{S} \cap \widehat{\mathcal{T}}_E$ 
corresponds to an $\mathcal{O}_E$-maximal order $\mathfrak{D}_E$ 
such that $f(\mathfrak{H}_L) \subseteq \mathfrak{D}_L$, whence
the result follows from the fact that $\mathfrak{D}_E$ is
the set of $\mathcal{G}_{L/E}$-invariant elements in 
$\mathfrak{D}_L$.
\end{proof}

\begin{proof}[Proof of Th. \ref{main teo 0b}]
    It follows from Th. \ref{main teo 0} that the maximal 
    orders in $\mathfrak{A}_E$ containing $\rho(G)$ are 
    in correspondence with the vertices of $\mathcal{S}_E=\mathcal{S} \cap \widehat{\mathcal{T}}_E$,
    as a subgraph of $\widehat{\mathcal{T}}_E$. On the other hand,
    it is immediate that $\mathfrak{L}_E^*$ acts trivially
    by conjugation on the $\mathcal{O}_E$-order
    $\mathfrak{H}_E$ generated by $\rho(G)$. Then
    it acts on the set of vertices of $\mathcal{S}_E$, as
    follows from Th. \ref{main teo 0}. Now the result is a 
    consequence of the fact that the whole 
    $\mathrm{GL}_2(E)$-action
    on the corresponding Bruhat-Tits tree is simplicial.
    Now, we can identify $\widehat{\mathcal{T}}_E$ with the
    Bruhat-Tit tree $\mathcal{T}_E$ by ignoring the Galois action,
    so the existence of the bijection follows from Prop. \ref{prop equiv rep and conj class}.
\end{proof}

\section{Integral representations of maximal orders in division algebras}\label{subsection int rep max orders}

Assume that $K$ is a local
field with $\mathrm{char}(K)\neq 2$.
The (unique) quaternion division algebra over $K$ can be presented as follows:
$$\mathfrak{A}=\Big(\frac{\pi_K,\Delta}{K}\Big)= 
K \left[ \mathtt{i},\mathtt{j} \vert 
\mathtt{i}\mathtt{j}+\mathtt{j}\mathtt{i}=\mathtt{0},  
\mathtt{i}^2=\pi_K\mathtt{1}, 
\mathtt{j}^2=\Delta\mathtt{1} \right],$$
where $\sqrt{\Delta}$ generates the unramified quadratic 
extension $F$ of $K$, while $\pi_K$ is one fix 
uniformizing parameter of $K$. Note that $\mathtt{0}$ and $\mathtt{1}$ denote quaternions, and we keep $\mathtt{1}$
in expressions of the form $a\mathtt{1}$, with
$a\in K$ to emphasize the fact that the product is
a quaternion. Let $L\subseteq\overline{K}$ be a Galois
extension of $K$ of even degree
containing both $\sqrt{\Delta}$ and $\sqrt{\pi_K}$.
Let $\tau\in\mathcal{G}=\mathrm{Gal}(L/K)$ be an element satisfying 
both $\tau(\sqrt\Delta)=-\sqrt\Delta$ and 
$\tau(\sqrt{\pi_K})=\sqrt{\pi_K}$. 
Let $\mathcal{H}=\mathrm{Gal}(L/F)$.
Note that $\tau^2\in\mathcal{H}$, but it need not
be trivial.

We choose the trivialization $f: \mathfrak{A}_{L} \to \mathbb{M}_2(L)$ 
given by:
\begin{equation}\label{eq trivialization}
\mathbf{i}=f(\mathtt{i})= \sbmattrix{0}{1}{\pi_K}{0}, 
\quad \mathbf{j}=f(\mathtt{j})= 
\sbmattrix{\sqrt{\Delta}}{0}{0}{-\sqrt{\Delta}}.
\end{equation}
Thus, the corresponding $1$-cocycle $a:\mathcal{G} 
\to \mathrm{PGL}_2(L)$ satisfies $a_{\tau}=\overline{\sbmattrix{0}{1}{\pi_K}{0}}$,
while $a_{\sigma}$ is trivial for $\sigma\in\mathcal{H}$, as the trivialization is 
defined over $F$. Accordingly, we assume that the twisted form $\widehat{\mathcal{T}}_L$ is the usual Bruhat-Tits tree
endowed with the twisted Galois action defined on the set
$\partial_{\infty}(\widehat{\mathcal{T}}_L)\cong \mathbb{P}^1(L)$ 
of visual limits by the following formulas:
\begin{equation}\label{eq twisted form}
\sigma*z=\sigma(z), \quad \tau*z=\frac{1}{\pi_K \cdot \tau(z)}, 
\quad \forall z\in \mathbb{P}^1(L), \forall \sigma\in\mathcal{H}.
\end{equation}
Let us denote by $v_{1/2}$ the vertex of $\widehat{\mathcal{T}}_L$
corresponding to the maximal order
$$\mathfrak{D}_{1/2}=\left\{\sbmattrix abcd\Big|\nu(a)\geq0, \nu(b)\geq\frac12,
\nu(c)\geq-\frac12, \nu(d)\geq0\right\}\subseteq\mathbb{M}_2(L)$$
via the trivialization $f$.
This vertex is not 
defined over $F$, so is the baricenter of an edge in 
$\widehat{\mathcal{T}}_{F}$. However, it is a vertex for
every subextension that ramifies over $K$.
Next result is now straightforward:

\begin{Lemma}\label{lemma action on p0inf}
The action of each $\mu\in \tau \mathcal{H}$, i.e., 
of each $\mu \in \mathcal{G}$ that satisfies 
$\mu(\sqrt{\Delta})=-\sqrt{\Delta}$, interchanges 
the visual limits $0$ and $\infty$, leaving the vertex 
$v_{1/2}$ as the only invariant vertex in the path 
$\mathcal{P}_L(0,\infty)$. \qed
\end{Lemma}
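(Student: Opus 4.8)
The plan is to make the twisted action of the elements $\mu\in\tau\mathcal{H}$ completely explicit on the maximal path $\mathcal{P}_L(0,\infty)$ and then to invoke the rigidity of a reflection of a line. First I would simplify the cocycle: writing $\mu=\tau\sigma$ with $\sigma\in\mathcal{H}$, the cocycle identity gives $a_\mu=a_\tau\cdot{}^{\tau}a_\sigma=a_\tau$, because $a_\sigma$ is trivial for every $\sigma\in\mathcal{H}$ (the trivialization $f$ is already defined over $F$). Hence the twisted action on the visual limits is $\mu*z=a_\tau\big(\mu(z)\big)=\tfrac{1}{\pi_K\,\mu(z)}$, precisely the formula in (\ref{eq twisted form}). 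Evaluating at the two $K$-rational ends, both of which $\mu$ fixes, yields $\mu*0=\infty$ and $\mu*\infty=0$, so $\mu$ interchanges $0$ and $\infty$.

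Since $\mu*$ is a simplicial homeomorphism of $\widehat{\mathcal{T}}_L$ interchanging the two visual limits of $\mathcal{P}_L(0,\infty)$, it maps this path to itself while reversing its ends, hence restricts to an orientation-reversing isometry of a line. Such an isometry has exactly one fixed point, a priori either a vertex or the midpoint of an edge; it then remains to show that this point is the vertex $v_{1/2}$.

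To locate it I would compute $\mu*$ directly on the vertices of the path, namely the balls $B_{0,r}$, equivalently the orders $\mathfrak{D}_r$. Since $\mu$ fixes $0\in K$ and preserves the valuation, its Galois part fixes every $B_{0,r}$, so on $\mathcal{P}_L(0,\infty)$ the twisted action reduces to the Moebius action of $a_\tau$, that is, to conjugation by $\mathbf{i}=\sbmattrix{0}{1}{\pi_K}{0}$. A one-line matrix computation then shows that this conjugation sends $\mathfrak{D}_r$ to $\mathfrak{D}_{c-r}$, with the constant $c$ determined by $\nu(\det\mathbf{i})$; solving $c-r=r$ isolates the unique invariant order, which is $v_{1/2}$. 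The same vertex can be recognized intrinsically as the unique point where $\mathcal{P}_L(0,\infty)$ meets the axis $\mathcal{P}_L\big(\pi_K^{-1/2},-\pi_K^{-1/2}\big)$ fixed pointwise by the involution $a_\tau$ (note $a_\tau^2$ is scalar), i.e. the peak of that axis.

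The step I expect to be most delicate is confirming that this fixed point is a genuine vertex rather than the center of an edge. This is exactly where the parity of $\nu(\det\mathbf{i})=\nu(\pi_K)=1$ matters: an involution whose determinant has odd valuation reverses the two vertex types of $\mathcal{T}_K$, so its fixed point could in principle be an edge midpoint. I would resolve this using the normalization $\nu_L(\pi_L)=1/e(L/K)$ together with $\sqrt{\pi_K}\in L$, which forces $e(L/K)$ to be even; then $\tfrac12\in\nu_L(L^*)$ and $v_{1/2}$ is an honest vertex of $\widehat{\mathcal{T}}_L$. This matches the observation preceding the statement that $v_{1/2}$ is a vertex over every subextension ramifying over $K$, but only the barycenter of an edge over the unramified field $F$.
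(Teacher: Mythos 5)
Your main argument is correct and coincides with the paper's own (implicit) proof: the paper records this lemma as straightforward, with no separate proof, precisely because it follows from the formulas in Eq.~\eqref{eq twisted form}; your write-up — the cocycle reduction $a_\mu=a_\tau$ for $\mu\in\tau\mathcal{H}$, the resulting swap of $0$ and $\infty$, the reflection argument on the line, the computation $\mathfrak{D}_r\mapsto\mathfrak{D}_{c-r}$, and the observation that $\sqrt{\pi_K}\in L$ forces $e(L/K)$ to be even so that $v_{1/2}$ is a genuine vertex of $\widehat{\mathcal{T}}_L$ — simply spells this out.

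One aside in your proposal is false, though not load-bearing: when $K$ is dyadic (the case of all the paper's applications), the path $\mathcal{P}_L(0,\infty)$ does not meet the axis $\mathcal{P}_L\big(\pi_K^{-1/2},-\pi_K^{-1/2}\big)$ at all. Indeed, by Lemma~\ref{lemma branch and MT} the fixed-point set of $a_\tau$ is the whole tube $\mathcal{P}_L\big(\pi_K^{-1/2},-\pi_K^{-1/2}\big)^{[\nu(2)]}$, and $v_{1/2}$ lies on its boundary, at distance exactly $\nu(2)$ from the axis; in particular $v_{1/2}$ equals the peak of that axis only when $\nu(2)=0$. Since your direct computation on the orders $\mathfrak{D}_r$ never uses this remark, the proof stands as written.
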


Now, we have the tools to prove Cor. \ref{main teo 1}.

\begin{proof}[Proof of Cor. \ref{main teo 1}]
The (unique) maximal order $\mathfrak{D}=
\mathfrak{D}_{\mathfrak{A}}$ of $\mathfrak{A}$ can be 
defined by the formula $\mathfrak{D}=\mathcal{O}_K\left[\mathtt{i}, 
\frac12(\mathtt{j}-\mathtt{1})\right]$.
We need to describe its representations into 
$\mathbb{M}_2(\mathcal{O}_E)$, for any intermediate extension 
$K \subseteq E \subseteq L$. In order to do this, start by 
computing the branch $\mathcal{S}_L(\mathfrak{D})$. Indeed, 
we choose the trivialization $f: \mathfrak{A}_{L} \to \mathbb{M}_2(L)$ 
described in Eq. \eqref{eq trivialization}. Thus, the twisted
form $\widehat{\mathcal{T}}_L$ is described by the formulas in 
Eq. \eqref{eq twisted form}. Since 
$\mathbf{w}:=\frac{1}{2\sqrt{\Delta}}\big(\sqrt{\Delta} 
\mathbf{1} -\mathbf{j}\big) = \sbmattrix{0}{0}{0}{1}$ is 
idempotent, we have that 
$\mathcal{O}_{L}[\mathbf{w}]\cong \mathcal{O}_{L} 
\times \mathcal{O}_{L}$. 
Thus, $S_L(\mathbf{w})$ is a maximal path according to 
Lemma \ref{splitbranches}.
By looking at the invariant visual limits of the Moebius 
transformation defined by $\mathbf{1}-2\mathbf{w}$, we 
get $\mathcal{S}_L(\mathbf{w})=\mathcal{P}_L(0,\infty)$.
Note that $\frac{\mathbf{j}-\mathbf{1}}{2}= 
\frac{\sqrt{\Delta}-1}{2} \mathbf{1}-\sqrt{\Delta} \mathbf{w}$,
whence $\mathcal{S}_L\left(\frac{\mathbf{j}-\mathbf{1}}{2}\right)= 
\mathcal{S}_L(\mathbf{w})= \mathcal{P}_L(0,\infty)$.
On the other hand, since $\mathbf{h}:=\frac{1}{2\sqrt{\pi_K}}\big(\sqrt{\pi_K} 
\mathbf{1} -\mathbf{j}\big)$ is 
idempotent, we have that $\mathcal{S}_L(\mathbf{h})$ is a maximal path.
Then, Lemma \ref{lemma contraidos} implies that 
$\mathcal{S}_{L}(\mathbf{i})=
\mathcal{P}_L(a,b) ^{[\nu(2\sqrt{\pi_K})]}$, 
for some pair  of distinct visual limits $a,b \in \mathbb{P}^1(L)$.
Since the irreducible polynomial of 
$\mathbf{q}=\frac{1}{\sqrt{\pi_K}}\mathbf{i}$ is $x^2-1$, 
the matrix $\mathbf{q}$ is 
integral over $\mathcal{O}_L$.
Thus, it follows from Lemma \ref{lemma branch and MT} 
that the visual limits $a$ and $b$ of the branch 
$\mathcal{S}_L(\mathbf{i})$ are the fixed points
of the Moebius transformation $\eta$ defined by 
$\eta(z)=\frac{1}{z\pi_K}$, which corresponds to
$\mathbf{i}$. In other words, we can assume 
that $a=\frac{1}{\sqrt{\pi_K}}$ and 
$b=-\frac{1}{\sqrt{\pi_K}}$. Thus, we have:
$$ \mathcal{S}_L(\mathbf{i})= 
\mathcal{P}_L\left( \frac{1}{\sqrt{\pi_K}}, 
-\frac{1}{\sqrt{\pi_K}}\right) ^{[\nu(2\sqrt{\pi_K})]}.$$
Therefore $\mathcal{S}_L(\mathfrak{D})=\mathcal{S}_L(\mathbf{i}) 
\cap \mathcal{S}_L\left(\frac{\mathbf{j}-\mathbf{1}}2\right)$ is the 
finite line $\mathcal{P}_L[v_0,v_1]$ joining the vertices 
$v_0$ and $v_1$, which, 
respectively, correspond to the 
$\mathcal{O}_L$-maximal orders 
$\mathfrak{D}_0=\mathbb{M}_2(\mathcal{O}_L)$ and
$\mathfrak{D}_1=\sbmattrix{\mathcal{O}_{L}}{\pi_K^{-1}
\mathcal{O}_{L}}{\pi_K\mathcal{O}_{L}}{\mathcal{O}_{L}}$.

Let $E$ be an intermediate extension $K \subseteq E \subseteq L$.
One the one hand, if $\sqrt{\Delta} \in E$, then 
the trivialization $f$ is defined over $E$, and therefore also
the finite line $\mathcal{P}_L[v_0,v_1]$.
Thus:
$$\mathcal{S}_L(\mathfrak{D})\cap \widehat{\mathcal{T}}_E= 
\mathcal{P}_L[v_0,v_1]\cap \widehat{\mathcal{T}}_E= \mathcal{P}_E[v_0,v_1].$$
On the other hand, if $\sqrt{\Delta} \notin E$, then there exists 
$\mu \in \mathcal{G}_{L/E}$ such that 
$\mu(\sqrt{\Delta})=-\sqrt{\Delta}$.
Then, it follows from Lemma \ref{lemma action on p0inf} that
$$S_E(\mathfrak{D})\subseteq
S_L(\mathfrak{D})^{\mathcal{G}_{L/E},*}= \lbrace v_{1/2}\rbrace,$$
whence Cor. \ref{main teo 1} follows.
\end{proof}

\begin{Example}\label{cor loc rep ij}
Note that, when $K$ is non-dyadic, we have that 
$\mathfrak{D}=\mathcal{O}_K\left[\mathtt{i}, 
\frac12(\mathtt{j}-\mathtt{1})\right] 
=\mathcal{O}_K\left[\mathtt{i}, \mathtt{j}\right]$.
So, under the hypotheses of Cor. \ref{main teo 1}, for each 
Galois extension $E$ of $K$ containing a quadratic extension, 
we have that:
\begin{itemize}
\item if $\sqrt{\Delta}\in E$, then there are exactly 
$e(E/K)+1$ conjugacy classes of representations of the form 
$\mathcal{O}_K\left[\mathtt{i}, \mathtt{j}\right] \hookrightarrow 
\mathbb{M}_2(\mathcal{O}_E)$, while
\item if $\sqrt{\Delta}\notin E$, all representations of the 
form $\mathcal{O}_K\left[\mathtt{i}, \mathtt{j}\right] 
\hookrightarrow \mathbb{M}_2(\mathcal{O}_E)$ are conjugates.
\end{itemize}
\end{Example}

Next, we consider the quaternion algebra
$\mathfrak{A}=\big(\frac{-1,-1}{\mathbb{Q}}\big)$, 
defined in terms of generator and relations by
$$\mathfrak{A}=\mathbb{Q}\big[ \mathtt{u},\mathtt{v} \vert \mathtt{u}^{2}=
\mathtt{v}^{2}=-\mathtt{1}, 
\mathtt{u}\mathtt{v}+\mathtt{v}\mathtt{u}=\mathtt{0} \big],$$
and the quaternion $\mathtt{w}=\frac{1}{2}(-\mathtt{1}+
\mathtt{u}+\mathtt{v}+\mathtt{u}\mathtt{v}) \in \mathfrak{A}$. 
The multiplicative group 
$G=\langle \mathtt{w}, \mathtt{u},\mathtt{v} \rangle
\subseteq \mathfrak{A}^*$ is called the Hurwitz unit group in current literature. 
This group has order $24$ and it is moreover isomorphic to 
$\mathrm{SL}_2(\mathbb{F}_3)$ according to 
\cite[Lemma 11.2.1]{Voight}. The subgroup generated by 
$\mathtt{u}$ and $\mathtt{v}$, which we denote by $Q_8$,
is called the quaternion group.

The group $\mathrm{SL}_2(\mathbb{F}_3)$ acts on the left on the size four set
$\mathbb{F}_3^*\backslash\mathbb{F}_3^2$ of nonzero column vectors 
up to sign, with $\lbrace \pm 1 \rbrace$ as kernel.
Moreover, since $A_4 \leq S_4$ is the unique subgroup of size $12$, the image of this (permutational) representation must be $A_4$. In other words, we have the exact sequence:
$$ 1 \to \lbrace \pm 1 \rbrace \to G\cong \mathrm{SL}_2(\mathbb{F}_3) \to A_4 \to 1.$$
The group $G$ is also isomorphic to the binary tetrahedral group, and it can be presented as:
$$ G\cong \langle r,s,t \vert r^2=s^3=t^3=rst=1 \rangle,$$
where $r=\mathtt{u}$, $s=\mathtt{w}$ and $t=\mathtt{w} \mathtt{v}^3$.
See \cite[\S 11.2.4]{Voight} for more details.

\begin{proof}[Proof of Th. \ref{main teo 2}]
Since $\mathbb{Z}[\mathtt{u},\mathtt{v},\mathtt{w}]$ is a
maximal $\mathbb{Z}$-order, we have that $\mathfrak{D}=\mathbb{Z}_2[\mathtt{u},\mathtt{v},\mathtt{w}]$ 
is the (unique) maximal $\mathbb{Z}_2$-order of the local 
division algebra $\mathfrak{A}_2:=
\mathfrak{A}\otimes_{\mathbb{Q}} \mathbb{Q}_2$.
Thus, the first statement of Th. \ref{main teo 2} follows from Cor. \ref{main teo 1}.
Now, let us consider $p\neq 2$, so that $\mathfrak{A}_p:=\mathfrak{A}\otimes_{\mathbb{Q}} \mathbb{Q}_p$ is a matrix algebra.
Since $\mathbb{Z}[\mathtt{u},\mathtt{v},\mathtt{w}]$ is a
maximal $\mathbb{Z}$-order, then
$\mathbb{Z}_p[\mathtt{u},\mathtt{v},\mathtt{w}]$ 
is a maximal $\mathbb{Z}_p$-order in $\mathfrak{A}_p$.
Thus, for each algebraic extension $E/\mathbb{Q}_p$ we have that $\mathcal{S}_E(\mathtt{u},\mathtt{v},\mathtt{w})$ contains just one vertex.
Now the second statement of Th. \ref{main teo 2} follows from Prop. 
\ref{prop equiv rep and conj class}.
\end{proof}

\begin{Example}\label{coro rep Q8 over p neq 2}
Let $E$ be an algebraic extension of $\mathbb{Q}_p$, for $p\neq 2$.
Let $\rho$ be the local quaternionic representation $\rho: Q_8 \to \mathrm{GL}_2(E)$ obtained by extending scalars to $E$.
Since at $p\neq 2$, the order $\mathbb{Z}_p[\mathtt{u},\mathtt{v},\mathtt{w}]$ equals $\mathbb{Z}_p[\mathtt{u},\mathtt{v}]$, which is the order
generated by $Q_8$,
all representations of the form $\rho': Q_8 \to \mathrm{GL}_2(\mathcal{O}_E)$ such that $\rho'\otimes_{\mathcal{O}_E} E=\rho$ are conjugates.
\end{Example}

Now, let us consider $\mathfrak{A}'=
\big(\frac{-3,-1}{\mathbb{Q}}\big)$, with generators 
$\mathtt{p}$ and $\mathtt{q}$
satisfying $\mathtt{p}^2=-\mathtt{1}$,
$\mathtt{q}^2=-\mathtt{3}$ and
$\mathtt{p}\mathtt{q}=-\mathtt{q}\mathtt{p}$,
and also the quaternion $\mathtt{r}=
\frac{1}{2}(1+\mathtt{q}) \in \mathfrak{A}'$.
The multiplictive 
group $G=\langle \mathtt{r}, \mathtt{p} \rangle$ is the 
dicyclic group, and it corresponds to the group of units of 
the $\mathbb{Z}$-order 
$\mathbb{Z}[\mathtt{r},\mathtt{p}]$.
The group $G$ has order $12$ and it is isomorphic to the only non-direct
semi-direct product $C_3 \rtimes C_4$.

\begin{proof}[Proof of Th. \ref{main teo 3}]
It follows from \cite[\S11.5.11(ii)]{Voight} that the $\mathbb{Z}$-order 
$\mathbb{Z}[\mathtt{r},\mathtt{p}]$, defined above, is  maximal.
Therefore $\mathfrak{D}'=\mathbb{Z}_3[\mathtt{r},\mathtt{p}]$ 
is the (unique)  maximal $\mathbb{Z}_3$-order of 
$\mathfrak{A}'_3=\mathfrak{A}'\otimes_{\mathbb{Q}} \mathbb{Q}_3$,
where $\mathfrak{A}'$ is as above.
Hence, the fist statement of Th. \ref{main teo 3} follows from Cor. \ref{main teo 1}.
Now, let us consider $p\neq 3$, so that $\mathfrak{A}'_p:=\mathfrak{A'}\otimes_{\mathbb{Q}} \mathbb{Q}_p$ is isomorphic to $\mathbb{M}_2(\mathbb{Q}_p)$.
Since $\mathbb{Z}[\mathtt{r}, \mathtt{p}]$ is a
maximal $\mathbb{Z}$-order, we have that $\mathbb{Z}_p[\mathtt{r}, \mathtt{p}]$ 
is a maximal $\mathbb{Z}_p$-order in $\mathfrak{A}'_p$.
Thus, for each algebraic extension $E/\mathbb{Q}_p$, the branch 
$\mathcal{S}_E(\mathtt{r}, \mathtt{p})$ consists in just one vertex.
Therefore, the second statement of Th. \ref{main teo 3} follows from Prop. \ref{prop equiv rep and conj class}.
\end{proof}

\section{Local integral representations of \texorpdfstring{$Q_8$}{Q8}}\label{section rep ham}

As we discuss in Cor. \ref{coro rep Q8 over p neq 2}, the local
integral representation of $Q_8$ over $\mathbb{Q}_p$, for $p \neq 2$,
can be studied by using the classical theory of branches in 
Bruhat-Tits trees as in Prop. \ref{prop equiv rep and conj class},
with no need of twistings, as the standard quaternion
algebra split at every odd prime.
Furthermore, there is a 
unique integral representation for such a 
group according to Cor. \ref{coro rep Q8 over p neq 2}.
For this reason, we assume $p=2$ throughout.

Recall that $K$ is a local field and $\rho: G \to \mathrm{GL}_n(K)$ is a representation.
For any extension of local fields $K/K'$, we denote by 
$\mathrm{IF}_{\rho}^G(K/K')$ 
the set of integral representations $\rho'$ over
$\mathcal{O}_K$ that are defined over $\mathcal{O}_{K'}$, 
and satisfy $\rho' \otimes_{\mathcal{O}_K} K \cong \rho$.
In the sequel, we focus into describing both $\mathrm{IF}_{\rho}^{Q_8}(K)$ and
$\mathrm{IF}_{\rho}^{Q_8}(K/K')$, 
for suitable field extensions $K/K'$ and for the local 
representation $\rho$ introduced in Ex. \ref{coro rep Q8 over p neq 2}.

Let 
$\mathfrak{A}=\big(\frac{-1,-1}{\mathbb{Q}}\big)=
\mathbb{Q}\big[ \mathtt{u}, \mathtt{v}]$, as at the end of
\S\ref{subsection int rep max orders},
and set $\mathfrak{H}=\mathbb{Z}[\mathtt{u}, \mathtt{v}] 
\subset \mathfrak{A}$.
The group of unit $\mathfrak{H}^{*}$ of $\mathfrak{H}$ is
isomorphic to the quaternion group $Q_8$ with the standard 
presentation, namely:
$$Q_8=\langle \mathtt{u}, \mathtt{v} \vert \mathtt{u}^4=\mathtt{1},
\mathtt{u}^2=\mathtt{v}^2, 
\mathtt{v}\mathtt{u}=\mathtt{u}^{-1}\mathtt{v} \rangle.$$
At the same time, consider $\mathfrak{A}''=\big(\frac{-2,-3}
{\mathbb{Q}}\big)=\mathbb{Q} \big[ \mathtt{i},\mathtt{j} \vert 
\mathtt{i}^2=-\mathtt{2}, \mathtt{j}^2= -\mathtt{3},  \mathtt{i} 
\mathtt{j}+\mathtt{j} \mathtt{i}=\mathtt{0}\big]$,
and let us consider the isomorphism $\phi: \mathfrak{A} \to 
\mathfrak{A}''$ defined by $\phi(\mathtt{u})=\frac{1}{6}(2 \mathtt{j}-
\mathtt{i}\mathtt{j}+3\mathtt{i})$ and $\phi(\mathtt{v})=\frac{1}{6}(2 
\mathtt{j}-\mathtt{i}\mathtt{j}-3\mathtt{i})$.
Let $L$ be a field extension of $\mathbb{Q}_2$ containing $\sqrt{-3}$.
The quaternion algebra $\mathfrak{A}''_L$ has the trivialization $f: 
\mathfrak{A}''_L \to \mathbb{M}_2(L)$ given by:
$$\mathbf{i}=f(\mathtt{i})=\sbmattrix{0}{1}{-2}{0}, \quad 
\mathbf{j}=f(\mathtt{j})=\sbmattrix{\sqrt{-3}}{0}{0}{-\sqrt{-3}}.$$
In particular, this defines a trivialization
$g=f\circ\phi:\mathfrak{A}_L\rightarrow\mathbb{M}_2(L)$ through
the following matrices:
\begin{equation*}
   \mathbf{u}:= g(\mathtt{u})= \frac{1}{6}
   \sbmattrix{2\sqrt{-3}}{\sqrt{-3}+3}{2\sqrt{-3}-6}{-2\sqrt{-3}}, 
   \quad \mathbf{v}:=g(\mathtt{v})= \frac{1}{6}
   \sbmattrix{2\sqrt{-3}}{\sqrt{-3}-3}{2\sqrt{-3}+6}{-2\sqrt{-3}}.
\end{equation*}
Let us write $\omega=\frac{-1+\sqrt{-3}}{2}$, 
so that the following identities hold:
\begin{equation}
    \mathbf{u}= \frac{1}{\sqrt{-3}}
    \sbmattrix{-1}{\omega}{-2(\omega+1)}{1}, 
    \quad \mathbf{v}= \frac{1}{\sqrt{-3}}
    \sbmattrix{-1}{-(\omega+1)}{2\omega}{1}.
\end{equation}
Assume that $\sqrt{-1} \in L$. Next we determine the branch 
$\mathcal{S}_L(\mathbf{u}, \mathbf{v})=
\mathcal{S}_L(\mathbf{u}) \cap \mathcal{S}_L(\mathbf{v})$.
Indeed, since the eigenvalues $\lambda_1=\sqrt{-1}$ 
and $\lambda_2=-\sqrt{-1}$ of $\mathbf{u}$
satisfy $\nu(\lambda_2-\lambda_1)=\nu(2)$, we have
$\mathcal{S}_L(\mathbf{u})=
\mathcal{P}_L(z_1,z_2)^{[\nu(2)]}$ for
suitable infinite limits $z_1$ and $z_2$.
See Ex. \ref{exa47}.
In fact, $z_1$ and $z_2$
are the fixed points of the Moebius transformation defined by 
$\mathbf{u}$, namely $z \mapsto \frac{-z+\omega}{-2(\omega+1)z+1}$.
Thus, $z_1=\frac{1+\sqrt{3}}{2(\omega+1)}$ and 
$z_2=\frac{1-\sqrt{3}}{2(\omega+1)}$.
Analogously $\mathcal{S}_L(\mathbf{v})=
\mathcal{P}_L(z_3,z_4)^{[\nu(2)]}$, where 
$z_3=\frac{-1+\sqrt{3}}{2\omega}$ and 
$z_4=\frac{-1-\sqrt{3}}{2\omega}$. Note that
$z_1z_3=z_2z_4=-\frac12$.
Since $\nu(z_1-z_2)=0$, the peak of the path 
$\mathcal{P}_L(z_1,z_2)$ is the vertex
$v_{1,2}:=B_{z_1, 0}=B_{z_2, 0}$.
Analogously, the peak of $\mathcal{P}_L(z_3,z_4)$ is 
$v_{3,4}:=B_{z_3,0}=B_{z_4,0}$.
Moreover, since $z_1-z_3=\frac{\sqrt{3}(1-\sqrt{-1})}{2}$, 
with the numerator having valuation $\nu(2)/2$,
the peak of $p(z_1,z_3)$ is $v_c:=B_{z_1,-\nu(2)/2}$.

\begin{figure}
\[ 
\fbox{
\xygraph{!{<0cm,0cm>;<0.9cm,0cm>:<0cm,0.9cm>::}
!{(6,5.8) }*+{\textbf{(A)}}="A"
!{(0,3.3) }*+{\circ}="b"
!{(3,3.3) }*+{\circ}="a"
!{(-0.6,3.3) }*+{\hphantom{x}^{v_{1,2}}}="bim"
!{(3.4,3.3) }*+{\hphantom{x}^{v_{3,4}}}="aim"
!{(1.5,4.3) }*+{\ast}="f"
!{(1.5,4.6) }*+{\hphantom{x}^{v_c}}="fim"
!{(0.5,4.5) }*+{\bullet}="x"
!{(2.7,4) }*+{\bullet}="y"
!{(0.5,4.8) }*+{\hphantom{x}^{w_1}}="xim"
!{(2.5,4.3) }*+{\hphantom{x}^{w_0}}="yim"
!{(-1.2,5.2) }*+{\hphantom{x}\star^{\infty}}="i"
!{(5.16,3.3)}*+{}="infint1"
!{(5,3.44)}*+{}="infint2"
!{(6,1.5)}*+{\hphantom{x}\star_{\xi =0}}="inf"
!{(2,1.5)}*+{\hphantom{x}\star_{z_3}}="d"
!{(4,1.5)}*+{\hphantom{x}\star_{z_4}}="h"
!{(1,1.5)}*+{\hphantom{x}\star_{z_2}}="g"
!{(-1,1.5)}*+{\hphantom{x}\star_{z_1}}="e"
"b"-@{-}"f" "a"-@{-}"f" "a"-@{.}"h" "b"-@{.}"e" "b"-@{.}"g" 
"a"-@{.}"d" "i"-@{~}"x" 
"x"-@{-}"f" "y"-@{-}"f" "y"-@{~}"infint1" "infint2"-@{~}"inf"}}
\fbox{
\begin{tikzpicture}[scale=1]
\node [above] at (4,3.83) {${\textbf{(B)}}$};
\node [above] at (2,1.8) {${}_{\spadesuit}$};
\draw (2,2) -- ++(90:1);\draw (2,2) -- ++(162:2);
\draw (2,2) -- ++(18:2);\draw (2,2) -- ++(234:1);
\draw (2,2) -- ++(-54:1);
\draw (1.05,2.3) -- ++(172:1);\draw (1.05,2.3) -- ++(182:1);
\draw (1.05,2.3) -- ++(152:1);\draw (2.95,2.3) -- ++(8:1);
\draw (2.95,2.3) -- ++(-2:1);\draw (2.95,2.3) -- ++(28:1);
\node [above] at (0.1,2.4) {${*}$};
\node [above] at (3.9,2.4) {${*}$};
\node [above] at (2,2.8) {${}_{\clubsuit}$};
\node [above] at (2.6,1) {${}_{\clubsuit}$};
\node [above] at (1.4,1) {${}_{\clubsuit}$};
\draw (2,3) -- ++(80:1);\draw (2,3) -- ++(100:1);
\draw (2,3) -- ++(70:1);\draw (2,3) -- ++(110:1);
\draw (1.4,1.2) -- ++(244:1);\draw (1.4,1.2) -- ++(224:1);
\draw (1.4,1.2) -- ++(254:1);\draw (1.4,1.2) -- ++(214:1);
\draw (2.6,1.2) -- ++(-44:1);\draw (2.6,1.2) -- ++(-34:1);
\draw (2.6,1.2) -- ++(-64:1);\draw (2.6,1.2) -- ++(-74:1);
\node [above] at (2.2,3.83) {${}_{\diamondsuit}$};
\node [above] at (1.8,3.83) {${\circ}$};
\node [above] at (0.65,0.2) {${}_{\diamondsuit}$};
\node [above] at (0.93,0.04) {${\circ}$};
\node [above] at (3.36,0.2) 
{${}_\diamondsuit$};
\node [above] at (3.07,0.04) {${\circ}$};
\end{tikzpicture}
}
\]
\caption{The intersection of the branches 
$\mathcal{S}_L(\mathbf{u})$ and 
$\mathcal{S}_L(\mathbf{v})$ can be visualized
in Fig. (A), according to the discussion in the text. 
We assume the field $L$ contains both $\sqrt{-3}$ and 
$\sqrt{-1}$. In that figure $v_c=B_{0,-1/2}$ is the 
middle point of the edge in $\widehat{\mathcal{T}}_E$ joining 
$w_1=B_{0,-1}$ with $w_{0}=B_{1,0}$.
Fig. (B) shows the $\mathcal{O}_\Omega$ 
representations of the quaternion
group classified according to the fields where 
they are defined. See Table \ref{table1}.
}\label{figura Q8 ghost}
\end{figure}
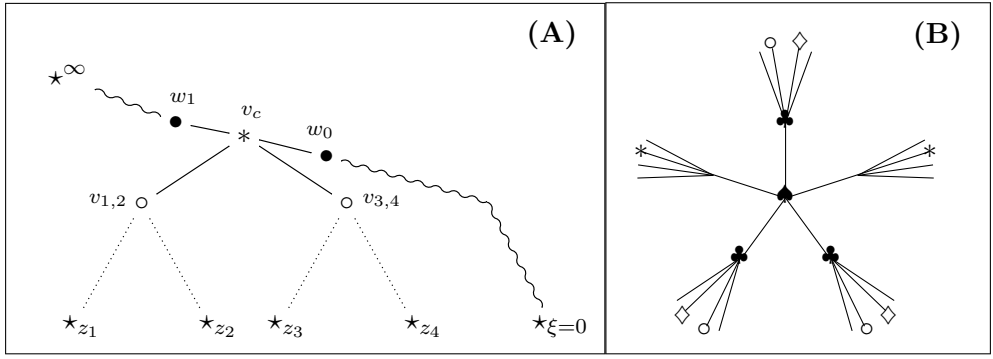

\begin{Lemma}\label{lemma S(u,v) is a ball}
For any splitting field $L$ of $\mathfrak{A}_2\cong
\mathfrak{A}_2''$, we have $v_c\in\widehat{\mathcal{T}}_L$, 
and the branch $\mathcal{S}_L(\mathbf{u}, \mathbf{v})$ is the ball 
with center $v_c$ and radius $\nu(2)/2$.
Furthermore, $v_c$ is a vertex of $\widehat{\mathcal{T}}_L$
if and only if the ramification degree of
$L/\mathbb{Q}_2$ is even.
\end{Lemma}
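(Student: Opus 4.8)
The plan is to reduce to the field $L'=L(\sqrt{-1},\sqrt{-3})$ (a finite Galois extension of $\mathbb{Q}_2$), where the four visual limits $z_1,\dots,z_4$ produced above already lie in $\mathbb{P}^1(L')$, to prove the statement there by a purely metric computation in the tree, and then to descend to an arbitrary splitting field by intersecting with $\widehat{\mathcal{T}}_L$. Over $L'$ the text gives $\mathcal{S}_{L'}(\mathbf{u})=\mathcal{P}_{L'}(z_1,z_2)^{[\nu(2)]}$ and $\mathcal{S}_{L'}(\mathbf{v})=\mathcal{P}_{L'}(z_3,z_4)^{[\nu(2)]}$, so computing $\mathcal{S}_{L'}(\mathbf{u},\mathbf{v})$ amounts to intersecting two tubular neighbourhoods of maximal paths. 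The first geometric input I would establish is that $z_1,z_2,z_3,z_4$ are arranged in an ``H'': using $z_1z_3=z_2z_4=-\tfrac12$ together with $\nu(z_1-z_3)=-\nu(2)/2$, I would check that all four cross-differences $z_i-z_j$ (with $i\in\{1,2\}$, $j\in\{3,4\}$) have valuation $-\nu(2)/2$. Consequently the peaks $v_{1,2}=B_{z_1,0}$ and $v_{3,4}=B_{z_3,0}$, both at level $0$, are joined by a bridge $\beta=\mathcal{P}_{L'}[v_{1,2},v_{3,4}]$ of length $\nu(2)$ whose midpoint is exactly $v_c=B_{0,-\nu(2)/2}$.

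With this configuration fixed, the intersection is a routine projection argument. For a point $x$ let $s$ be its projection onto the convex spine $\Sigma=\mathcal{P}_{L'}(z_1,z_2)\cup\beta\cup\mathcal{P}_{L'}(z_3,z_4)$ and set $t=d(x,\Sigma)$; convexity of $\Sigma$ gives $d(x,\mathcal{P}_{L'}(z_1,z_2))=t+d(s,\mathcal{P}_{L'}(z_1,z_2))$ and similarly for $\mathcal{P}_{L'}(z_3,z_4)$. Writing $s\in\beta$ at distance $c$ from $v_{1,2}$, the two membership conditions reduce to $t\le c$ and $t\le\nu(2)-c$, i.e.\ $t\le\min(c,\nu(2)-c)$, which is exactly the inequality $d(x,v_c)=t+\lvert c-\nu(2)/2\rvert\le\nu(2)/2$; a projection lying strictly inside one of the two paths throws $x$ outside the tubular neighbourhood of the other. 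Since every point of the ball $B(v_c,\nu(2)/2)$ projects onto $\beta$, this yields $\mathcal{S}_{L'}(\mathbf{u},\mathbf{v})=B(v_c,\nu(2)/2)$.

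To pass to a general splitting field $L$, I would invoke Th.~\ref{main teo 0}(2), which, via Lemma~\ref{lemma galois cont} and the realization of $\widehat{\mathcal{T}}_L$ as a subtree of $\widehat{\mathcal{T}}_{L'}$ in Lemma~\ref{lemma embb of non-trivial forms}, gives $\mathcal{S}_L(\mathbf{u},\mathbf{v})=B(v_c,\nu(2)/2)\cap\widehat{\mathcal{T}}_L$. As $\widehat{\mathcal{T}}_L$ is a convex subtree and the inclusion is an isometry for the normalized metric, this equals the ball of radius $\nu(2)/2$ about $v_c$ computed inside $\widehat{\mathcal{T}}_L$, once one knows $v_c\in\widehat{\mathcal{T}}_L$. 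When $\sqrt{-3}\in L$ this is immediate: then $\mathcal{G}_{L'/L}$ fixes $\sqrt{-3}$, so its twisted action is the standard one and fixes $0$ and $\infty$; hence $\mathcal{P}_L(0,\infty)\subseteq\widehat{\mathcal{T}}_L$ and $v_c$ lies on it. When $\sqrt{-3}\notin L$ the unramified quadratic extension $\mathbb{Q}_2(\sqrt{-3})$ is not contained in $L$, forcing $e(L/\mathbb{Q}_2)$ to be even; here I would locate $v_c$ as the $\mathcal{G}_{L'/L}$-invariant center of the nonempty branch $\mathcal{S}_L(\mathfrak{D})$ of the Hurwitz maximal order $\mathfrak{D}=\mathbb{Z}_2[\mathtt{u},\mathtt{v},\mathtt{w}]\supseteq\mathfrak{H}$, which is a single vertex by Cor.~\ref{main teo 1}(1) and must coincide with the center $v_c$ of the containing ball by the symmetry of the configuration.

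Finally, for the vertex criterion I would use that $v_c=B_{0,-\nu(2)/2}$ is the midpoint of the edge of $\mathcal{P}_L(0,\infty)$ joining the vertices $B_{0,0}$ and $B_{0,-1}$, hence lies at distance $\nu(2)/2=\tfrac12$ from each; by Lemma~\ref{lemma defined over F}, $v_c$ is a vertex of $\widehat{\mathcal{T}}_L$ precisely when $\tfrac12\in\nu_L(L^\ast)=\tfrac1{e(L/\mathbb{Q}_2)}\mathbb{Z}$, that is, when $e(L/\mathbb{Q}_2)$ is even. I expect the delicate step to be the verification $v_c\in\widehat{\mathcal{T}}_L$ in the ramified case with $\sqrt{-3}\notin L$: there the pointwise $\mathcal{G}_{L'/L}$-invariant subspace of $\widehat{\mathcal{T}}_{L'}$ can be strictly larger than $\widehat{\mathcal{T}}_L$ (the ``hairs'' of Prop.~\ref{prop hairs} and Ex.~\ref{ex pelitos}), so invariance of $v_c$ alone does not place it in the $L$-subtree, and one genuinely needs to realize $v_c$ as an actual point of $\widehat{\mathcal{T}}_L$ through the maximal-order branch.
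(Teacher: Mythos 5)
Your proposal is correct in outline and, for most of its length, runs parallel to the paper's own proof: the explicit computation over $L''=L(\sqrt{-1},\sqrt{-3})$ (your first two paragraphs, which usefully spell out the projection argument that the paper delegates to Fig.~\ref{figura Q8 ghost}(A)), and the reduction $\mathcal{S}_L(\mathbf{u},\mathbf{v})=\mathcal{S}_{L''}(\mathbf{u},\mathbf{v})\cap\widehat{\mathcal{T}}_L$ via Th.~\ref{main teo 0}(2) followed by convexity, are both present there (the paper merely routes through the intermediate field $L(\sqrt{-3})$ first). The genuine divergence is in the crucial step of placing $v_c$ in $\widehat{\mathcal{T}}_L$ when $\sqrt{-3}\notin L$. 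The paper does this on the Galois side: it uses that $L(\sqrt{-3})/L$ is \emph{unramified}, so that $\widehat{\mathcal{T}}_L$ coincides with the pointwise fixed set of $\mathrm{Gal}\big(L(\sqrt{-3})/L\big)$, and then computes the twisted action of the nontrivial element, $\xi*z=-1/\big(2\xi(z)\big)$, which swaps $w_0$ and $w_1$ and hence fixes their midpoint $v_c$. You instead locate $v_c$ through the Hurwitz maximal order $\mathfrak{D}=\mathbb{Z}_2[\mathtt{u},\mathtt{v},\mathtt{w}]\supseteq\mathfrak{H}$ and Cor.~\ref{main teo 1}; this is a legitimately different mechanism, and it buys exactly what you claim: it avoids confusing Galois-invariant points with points of the subtree, the hairs issue of Prop.~\ref{prop hairs} and Ex.~\ref{ex pelitos} that forces the paper to invoke unramifiedness.

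That said, the sentence ``must coincide with the center $v_c$ of the containing ball by the symmetry of the configuration'' is where the real work is hidden, and as written it is a gap: the \emph{statement} of Cor.~\ref{main teo 1}(1) only says the branch of $\mathfrak{D}$ over $L$ is a single vertex somewhere in the ball, not which one. To close it you need two inputs that come from the \emph{proof} of Cor.~\ref{main teo 1}, not its statement: (i) $\mathcal{S}_{L''}(\mathfrak{D})$ is a geodesic segment of length $\nu(2)$; since $\mathfrak{D}\supseteq\mathfrak{H}$ puts it inside the ball of radius $\nu(2)/2$, the triangle inequality forces it to be a diameter, so its midpoint is $v_c$; and (ii) the unique $L$-vertex on that segment is the midpoint, because any $\mu\in\mathcal{G}_{L''/L}$ with $\mu(\sqrt{-3})=-\sqrt{-3}$ reverses the segment (this is the computation of Lemma~\ref{lemma action on p0inf} reappearing), or alternatively because conjugation by any element of $\mathfrak{A}^*$ of odd-valuation reduced norm is defined over $\mathbb{Q}_2$, fixes $\mathfrak{D}$, and reverses the segment while permuting $L$-vertices. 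A smaller mismatch: you apply Cor.~\ref{main teo 1}(1) to an arbitrary splitting field with $\sqrt{-3}\notin L$, but its hypothesis requires $E_2$ to contain a quadratic extension of $\mathbb{Q}_2$; splitting fields without quadratic subfields exist (e.g.\ quartic fields whose Galois closure has group $A_4$), so you should either observe that the proof of Cor.~\ref{main teo 1} only needs $E_2$ to split $\mathfrak{A}$, or restrict the class of fields as the paper implicitly does.
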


\begin{proof}
    If $L$ contains both, $\sqrt{-1}$ and $\sqrt{-3}$,
    the result follows from the preceding discussion.
    Otherwise, we define $L'=L(\sqrt{-3})$
    and $L''=L(\sqrt{-1}, \sqrt{-3})$. Note that
    the trivialization $f$ is defined over $L'$,
    and therefore $w_0,w_1,v_c\in\widehat{\mathcal{T}}_{L'}$. 
    Note, however, that $v_c$ is a vertex precisely when
    $L'$ has even ramification degree over $\mathbb{Q}_2$,
    since the radius $-\nu(2)/2$ of $v_c$ as a ball
    of $L''$ is not defined over $\mathbb{Q}_2$.
    Now, the branch
    $\mathcal{S}_{L'}(\mathbf{u}, \mathbf{v})$
    is the intersection of the ball 
    $\mathcal{S}_{L''}(\mathbf{u}, \mathbf{v})$
    with the tree $\widehat{\mathcal{T}}_{L'}$. Since the center $v_c$
    belongs to $\widehat{\mathcal{T}}_{L'}$, and so are the opposite
    leaves $w_0$ and $w_1$, the branch
    $\mathcal{S}_{L'}(\mathbf{u}, \mathbf{v})$ is
    still a ball with the same center and radius.
    This proves the result for $L'$. 
    To prove it for $L$ we observe that
    $L'/L$ is an unramified Galois extension, whence
    $\widehat{\mathcal{T}}_L=
    \widehat{\mathcal{T}}_{L'}^{\mathrm{Gal}(L'/L)}$ as sets. 
    If $L=L'$ there is nothing to prove. Otherwise 
    $\mathrm{Gal}(L'/L)$ is a cyclic group of order $2$ containing 
    a unique non-trivial element $\xi$ of order $2$ that acts via
    $\xi*z=\frac{-1}{2\xi(z)}$, whence it exchanges 
    the vertices $w_0$ and $w_1$. This, in particular, implies
    that $v_c$ is a vertex of the invariant tree 
    $\widehat{\mathcal{T}}_L$,
    as it is the only point of intersection between a
    tree and a path. Now the ball of center $v_c$ and radius
    $\nu(2)/2$ has points in $\widehat{\mathcal{T}}_L$ whose distances
    to $v_c$
    equal the radius $\nu(2)/2$. It follows that these points 
    must be vertices. Note that, in this case, $L'$ must have
    even ramification degree over $\mathbb{Q}_2$, since $v_c$ is 
    a vertex, and the same holds therefore for $L$.
\end{proof}

\begin{Prop}\label{prop int rep in many fields}
    Let $E=\mathbb{Q}_2(\sqrt{-3})$, and let
    $F/\mathbb{Q}_2$ be a ramified quadratic extension.
    Let $F'$ be the other ramified quadratic sub-extension
    of $EF/\mathbb{Q}_2$. Then, the set 
    $\mathrm{IF}_{\rho}^{Q_8}(EF)$ has $6$ elements, 
    $2$ of which are defined over
    $\mathcal{O}_E$, while the other $4$ are defined over 
    either ring $\mathcal{O}_F$ or $\mathcal{O}_{F'}$.
\end{Prop}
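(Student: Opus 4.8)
The plan is to read off all four cardinalities $\sharp\,\mathrm{IF}_\rho^{Q_8}(L)$ for $L\in\{E,F,F',EF\}$ from Lemma \ref{lemma S(u,v) is a ball} and then to sort the six $\mathcal{O}_{EF}$-classes by their field of definition with a short counting argument. First I would record the arithmetic of the fields: since $E=\mathbb{Q}_2(\sqrt{-3})$ is the unramified quadratic extension and $F$ is ramified, the biquadratic field $EF$ satisfies $e(EF/\mathbb{Q}_2)=2$ and has residue field $\mathbb{F}_4$, while its third quadratic subextension $F'$ is again ramified (as $E$ is the unique unramified quadratic subextension). Normalizing by $\nu(\pi_{EF})=\tfrac12$ gives $\nu(2)=1$, so the radius in Lemma \ref{lemma S(u,v) is a ball} is $\nu(2)/2=\tfrac12$, which is exactly the length of one edge of $\widehat{\mathcal{T}}_{EF}$, of $\widehat{\mathcal{T}}_F$ and of $\widehat{\mathcal{T}}_{F'}$, but only half the length of an edge of $\widehat{\mathcal{T}}_E$ (since $EF/F$, $EF/F'$ are unramified while $EF/E$ is ramified of degree $2$).

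Next, as $\rho$ is absolutely irreducible, Th. \ref{main teo 0b} identifies $\mathrm{IF}_\rho^{Q_8}(L)$ with the set of $L$-vertices of the branch $\mathcal{S}_L(\mathbf{u},\mathbf{v})$, which by Lemma \ref{lemma S(u,v) is a ball} is the ball of center $v_c$ and radius $\tfrac12$. Counting the vertices of this ball over each field then yields the four numbers I need: over $EF$ the vertex $v_c$ together with its $\sharp\mathbb{F}_4+1=5$ neighbours, hence $6$ classes; over $F$ and over $F'$ the vertex $v_c$ with its $\sharp\mathbb{F}_2+1=3$ neighbours, hence $4$ classes each; and over $E$, where $v_c$ is the midpoint of an edge (its ramification degree being odd), only the two endpoints of that edge, hence $2$ classes. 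By Th. \ref{main teo 0}(2) one has $\mathcal{S}_L(\mathbf{u},\mathbf{v})=\mathcal{S}_{EF}(\mathbf{u},\mathbf{v})\cap\widehat{\mathcal{T}}_L$, so I may view the $2$ classes over $E$, the $4$ over $F$ and the $4$ over $F'$ as three subsets $\mathcal{S}_E,\mathcal{S}_F,\mathcal{S}_{F'}$ of the $6$-element set of $EF$-vertices of the ball.

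The heart of the argument is the disjointness $\mathcal{S}_E\cap\mathcal{S}_F=\mathcal{S}_E\cap\mathcal{S}_{F'}=\varnothing$. The two $E$-vertices are the endpoints of the $E$-edge through $v_c$, which lie on the geodesic $\mathcal{P}_{EF}(0,\infty)$ on opposite sides of $v_c$. Both $\sigma_F$ and $\sigma_{F'}$ send $\sqrt{-3}$ to $-\sqrt{-3}$, so by the same computation as in Lemma \ref{lemma action on p0inf}, carried out for the trivialization $g$ as in the proof of Lemma \ref{lemma S(u,v) is a ball} (where $\xi\ast z=\tfrac{-1}{2\xi(z)}$), their twisted action interchanges the visual limits $0$ and $\infty$ and fixes $v_c$ as the unique invariant vertex of $\mathcal{P}_{EF}(0,\infty)$. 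Hence $\sigma_F\ast$ and $\sigma_{F'}\ast$ interchange the two $E$-vertices; in particular neither is $\sigma_F\ast$- or $\sigma_{F'}\ast$-fixed, so neither is an $F$- or an $F'$-vertex, which is the asserted disjointness.

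Finally I would conclude by pure counting. The two $E$-classes are disjoint from $\mathcal{S}_F$ and $\mathcal{S}_{F'}$, so these last two sets are contained in the four remaining vertices; since $\sharp\mathcal{S}_F=4$, it follows that $\mathcal{S}_F$ consists of exactly those four vertices, and likewise $\mathcal{S}_{F'}$. Thus the six $\mathcal{O}_{EF}$-representations split as the $2$ defined over $\mathcal{O}_E$ and the remaining $4$, each defined over $\mathcal{O}_F$ (and in fact also over $\mathcal{O}_{F'}$), which is exactly the claim. The delicate points will be the metric bookkeeping of the radius against the differing edge-lengths over the ramified and unramified subfields, and the use of the \emph{twisted} rather than the standard Galois action to see that the two $E$-vertices are genuinely interchanged; the wild ramification of $EF/E$ (cf. Prop. \ref{prop hairs}) is what obstructs a naive identification of the four non-$E$ vertices, but the counting argument sidesteps having to pin down their individual distribution between $F$ and $F'$.
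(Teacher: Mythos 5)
Your proposal is correct and follows essentially the same route as the paper's proof: identify each $\mathrm{IF}_\rho^{Q_8}(L)$ with the vertices of the ball from Lemma \ref{lemma S(u,v) is a ball} lying in $\widehat{\mathcal{T}}_L$, count them via ramification and residue degrees ($6$ over $EF$, $4$ over $F$ and $F'$, $2$ over $E$ since $v_c$ is only an edge midpoint there), and use the twisted action of the elements moving $\sqrt{-3}$ to swap $w_0$ and $w_1$, excluding them from $\widehat{\mathcal{T}}_F$ and $\widehat{\mathcal{T}}_{F'}$. The only cosmetic difference is that you recover $\mathcal{S}_F=\mathcal{S}_{F'}$ as the four remaining vertices by a pigeonhole count, where the paper identifies them directly as the $\mathrm{Gal}(EF/F)$-fixed vertices of the ball; the content is the same.
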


\begin{proof}
    Since $e(EF/\mathbb{Q}_2)=2$ and $f(EF/\mathbb{Q}_2)=2$,
    the branch $\mathcal{S}_{EF}(\mathbf{u}, \mathbf{v})$ consists of 
    one vertex and its $5$ neighbors. Since $EF/F$ is unramified,
    the graph $\widehat{\mathcal{T}}_F$ coincides, as a set, with the 
    invariant subgraph 
    $\widehat{\mathcal{T}}_{EF}^{\mathrm{Gal}(EF/F)}$, 
    and the non-trivial element in
    $\mathrm{Gal}(EF/F)$ exchanges $w_0$ and $w_1$ as before.
    The proof for $F'$ is similar.
    Since $v_c$ is not a vertex of $\widehat{\mathcal{T}}_E$, 
    $w_0$ and $w_1$ 
    are the only vertices in $\widehat{\mathcal{T}}_E$ that belong to 
    $\mathcal{S}_{EF}(\mathbf{u}, \mathbf{v})$.
\end{proof}



Let $\Omega=\mathbb{Q}_2(\sqrt{-1},\sqrt{-3},\sqrt{2})$ be the minimal 
field containing the roots of the representatives 
$\lbrace 1, -1,3,-3,2,-2,6,-6 \rbrace$ of 
$\mathbb{Q}_2^*/\mathbb{Q}_2^{2*}$.
Since $e(\Omega/\mathbb{Q}_2)=4$ and $f(\Omega/\mathbb{Q}_2)=2$, 
then the vertices of $S_{\Omega}(\mathbf{u}, \mathbf{v})$ are
the center, its $5$ neighbor, and the $20$ remaining neighbors
of the latter. Now, next result follows from 
Lemma \ref{lemma S(u,v) is a ball} and 
Prop. \ref{prop equiv rep and conj class}: 

\begin{Corollary}\label{Cor int rep of Q8 over Omega}
In the above notations, the set
$\mathrm{IF}_{\rho}^{Q_8}(\Omega)$ has $26$ elements. \qed
\end{Corollary}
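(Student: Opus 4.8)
The plan is to translate the statement into counting the vertices of a ball in $\widehat{\mathcal{T}}_\Omega$ and then to carry out that count. Since the image $\rho(Q_8)$ spans $\mathfrak{A}_\Omega\cong\mathbb{M}_2(\Omega)$, the representation $\rho$ is absolutely irreducible, so its centralizer reduces to scalars. By Prop.~\ref{prop equiv rep and conj class}, applied over the local field $\Omega$ after identifying $\widehat{\mathcal{T}}_\Omega$ with the Bruhat--Tits tree $\mathcal{T}_\Omega$, the set $\mathrm{IF}_{\rho}^{Q_8}(\Omega)$ is in bijection with the set of maximal $\mathcal{O}_\Omega$-orders containing $\rho(Q_8)$, that is, with the vertices of $\widehat{\mathcal{T}}_\Omega$ lying in the branch $\mathcal{S}_\Omega(\mathbf{u},\mathbf{v})$. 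It therefore suffices to count those vertices.

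First I would invoke Lemma~\ref{lemma S(u,v) is a ball}. Because $\Omega$ contains both $\sqrt{-1}$ and $\sqrt{-3}$, we are in the first case of that lemma, so $\mathcal{S}_\Omega(\mathbf{u},\mathbf{v})$ is \emph{exactly} the ball with centre $v_c$ and radius $\nu(2)/2$, with no truncation coming from passing to a subfield; moreover $v_c$ is a genuine vertex of $\widehat{\mathcal{T}}_\Omega$ since $e(\Omega/\mathbb{Q}_2)=4$ is even. Normalizing the valuation so that $\nu(\pi_\Omega)=\tfrac14$, each edge of $\widehat{\mathcal{T}}_\Omega$ has length $\tfrac14$; as $\nu(2)=1$, the radius $\nu(2)/2=\tfrac12$ equals twice the edge length, so the ball consists precisely of the vertices at graph distance $0$, $1$ or $2$ from $v_c$.

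Finally I would count. Since $f(\Omega/\mathbb{Q}_2)=2$, the residue field of $\Omega$ is $\mathbb{F}_4$, so every vertex of $\widehat{\mathcal{T}}_\Omega$ has $|\mathbb{F}_4|+1=5$ neighbours. Hence we get the centre $v_c$, its $5$ neighbours at distance $1$, and $5\cdot(5-1)=20$ vertices at distance $2$ (each distance-$1$ vertex contributes its $4$ neighbours other than $v_c$, and these are pairwise distinct because $\widehat{\mathcal{T}}_\Omega$ is a tree). This gives $1+5+20=26$ vertices, and therefore $\sharp\,\mathrm{IF}_{\rho}^{Q_8}(\Omega)=26$.

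The computation is short once the branch is known, so the only genuinely delicate points are bookkeeping ones: verifying that $v_c$ is an honest vertex rather than an edge midpoint (which is exactly where the even ramification $e(\Omega/\mathbb{Q}_2)=4$ enters, via Lemma~\ref{lemma S(u,v) is a ball}), and correctly converting the metric radius $\nu(2)/2$ into the graph-distance radius $2$ through the normalization $\nu(\pi_\Omega)=\tfrac14$. With these in place the enumeration is forced, and the result follows.
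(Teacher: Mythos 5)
Your proposal is correct and follows essentially the same route as the paper: it combines Lemma~\ref{lemma S(u,v) is a ball} (the branch is the ball of centre $v_c$ and radius $\nu(2)/2$) with Prop.~\ref{prop equiv rep and conj class} via absolute irreducibility, and then counts $1+5+20=26$ vertices using $e(\Omega/\mathbb{Q}_2)=4$ and $f(\Omega/\mathbb{Q}_2)=2$. Your write-up is in fact slightly more explicit than the paper's about the metric-to-graph-distance conversion and about $v_c$ being a genuine vertex, but the argument is the same.
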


\begin{Corollary}\label{Cor int rep of Q8 over Psi}
If $\Psi$ is a totally ramified biquadratic extension of
$\mathbb{Q}_2$, then $\mathrm{IF}_{\rho}^{Q_8}(\Psi)$ has $10$ elements.
\end{Corollary}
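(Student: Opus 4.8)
The plan is to reduce the count of $\mathrm{IF}_{\rho}^{Q_8}(\Psi)$ to counting vertices of the twisted tree $\widehat{\mathcal{T}}_\Psi$ lying in a single explicit ball, and then to perform that count by hand. Since the faithful two-dimensional representation $\rho$ of $Q_8$ is absolutely irreducible, Prop. \ref{prop equiv rep and conj class} (equivalently, the last sentence of Th. \ref{main teo 0b}) furnishes a bijection between $\mathrm{IF}_{\rho}^{Q_8}(\Psi)$ and the set of vertices of $\widehat{\mathcal{T}}_\Psi$ corresponding to maximal orders containing $\rho(Q_8)$, i.e.\ the vertices lying in the branch $\mathcal{S}_\Psi(\mathbf{u},\mathbf{v})$. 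Thus the whole problem becomes a vertex count, exactly as in the computation preceding Cor. \ref{Cor int rep of Q8 over Omega}.

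First I would record the ramification data of $\Psi$. Being a totally ramified biquadratic extension, $\Psi$ satisfies $e(\Psi/\mathbb{Q}_2)=[\Psi:\mathbb{Q}_2]=4$ and $f(\Psi/\mathbb{Q}_2)=1$. Then Lemma \ref{lemma S(u,v) is a ball} applies: the branch $\mathcal{S}_\Psi(\mathbf{u},\mathbf{v})$ is the closed ball of center $v_c$ and radius $\nu(2)/2$, and $v_c$ is a genuine vertex of $\widehat{\mathcal{T}}_\Psi$ precisely because $e(\Psi/\mathbb{Q}_2)=4$ is even.

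Next I would translate the metric radius into a graph distance in $\widehat{\mathcal{T}}_\Psi$. With the normalization $\nu(2)=1$ the radius is $\tfrac12$, while the edges of $\widehat{\mathcal{T}}_\Psi$ have normalized length $1/e(\Psi/\mathbb{Q}_2)=\tfrac14$; hence the closed ball of radius $\tfrac12$ about the vertex $v_c$ consists exactly of the points at graph distance at most $2$, and its vertices are those at graph distance $0$, $1$, or $2$. Because $f(\Psi/\mathbb{Q}_2)=1$, the residue field of $\Psi$ is $\mathbb{F}_2$, so $\widehat{\mathcal{T}}_\Psi$ is $\big(|\mathbb{F}_2|+1\big)=3$-regular. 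Counting vertices by distance then yields the central $v_c$, its $3$ neighbors, and the $3\cdot 2=6$ vertices at distance $2$, for a total of $1+3+6=10$. This is the same shape of computation as for $\Omega$, but with residue field $\mathbb{F}_2$ in place of $\mathbb{F}_4$, which is why the total drops from $26$ to $10$.

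The one point requiring care, and the main obstacle, is ensuring that every vertex at graph distance $\le 2$ from $v_c$ is a bona fide $\Psi$-vertex of the twisted tree, with none lost to, or introduced by, the ghost-vertex and Galois-descent phenomena of \S\ref{subsection ext BTT}. This is precisely what the descent argument in the proof of Lemma \ref{lemma S(u,v) is a ball} guarantees: realizing the ball over $\Psi(\sqrt{-3})$ and $\Psi(\sqrt{-1},\sqrt{-3})$ and then descending along the unramified extension $\Psi(\sqrt{-3})/\Psi$ shows that the center $v_c$ and the boundary sphere of the ball descend to vertices of $\widehat{\mathcal{T}}_\Psi$, so the whole ball is a genuine subgraph of $\widehat{\mathcal{T}}_\Psi$ and the count of $10$ is unaffected. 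With this verified, the corollary follows from Lemma \ref{lemma S(u,v) is a ball} together with Prop. \ref{prop equiv rep and conj class}.
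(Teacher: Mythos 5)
Your proposal is correct and takes essentially the same approach as the paper: both rest on Lemma \ref{lemma S(u,v) is a ball} (together with Prop. \ref{prop equiv rep and conj class}) to identify the branch as the ball of center $v_c$ and radius $\nu(2)/2$, and then count $1+3+3\cdot 2=10$ vertices in the $3$-regular tree $\widehat{\mathcal{T}}_\Psi$. The only cosmetic difference is that the paper organizes the count by comparison with the already-settled case of $\Omega$, using that $\Omega/\Psi$ is unramified, whereas you apply the lemma to $\Psi$ directly; the substance is identical.
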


\begin{proof}
    Since $\Omega/\Psi$ is unramified, the distance between
    neighbors is the same in both trees, $\widehat{\mathcal{T}}_\Omega$ and
    $\widehat{\mathcal{T}}_\Psi$. The corresponding branches are ball of the 
    same center and radius, by Lemma \ref{lemma S(u,v) is a ball}.
    The only difference is that every vertex has valency
    $5$ in $\widehat{\mathcal{T}}_\Omega$, while they have valency $3$ 
    in $\widehat{\mathcal{T}}_\Psi$.
\end{proof}

\begin{Lemma}\label{lemma perm}
    The action on the tree $\widehat{\mathcal{T}}_\Omega$ defined from 
    the Moebius transformation $\mu(z)=\omega z$ commutes
    with the twisted Galois action defined from the trivialization
    $f$ via Eq. \eqref{eq twisted form}. Furthermore, we have the 
    identities:
    $$\mu \cdot w_0=w_0,\qquad \mu \cdot w_1=w_1,\qquad
    \mu \cdot v_{1,2}=v_{3,4},\qquad \mu \cdot v_{3,4}=w\quad 
    \textnormal{ and }\quad \mu \cdot w=v_{1,2}.$$
\end{Lemma}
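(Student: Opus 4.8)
The plan is to prove the two assertions separately: the commutativity of $\mu$ with the twisted Galois action reduces to a single identity in $\mathrm{PGL}_2(L)$, while the five vertex identities follow from the explicit formula for the Moebius action on balls together with $\omega^3=1$.

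For the commutativity, recall that the twisted action of $g\in\mathcal{G}$ is $g* x=a_g\bullet g(x)$, where $a_g$ is the Moebius transformation attached to the cocycle of Eq. \eqref{eq twisted form} and $g(x)$ denotes the standard Galois action. Using the compatibility \eqref{eq comp actions}, namely $g(\mu\bullet x)={}^{g}\mu\bullet g(x)$, one computes $g*(\mu\bullet x)=(a_g\cdot{}^{g}\mu)\bullet g(x)$ and $\mu\bullet(g* x)=(\mu\cdot a_g)\bullet g(x)$, so the desired commutation is equivalent to the relation $\mu\,a_g=a_g\,{}^{g}\mu$ in $\mathrm{PGL}_2(L)$ for every $g\in\mathcal{G}$. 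Since $\mathcal{G}=\mathcal{H}\sqcup\tau\mathcal{H}$ with $\mathcal{H}=\mathrm{Gal}(L/F)$ fixing $\sqrt{-3}$, it suffices to check this on $\mathcal{H}$ and on $\tau$. For $\sigma\in\mathcal{H}$ the cocycle value $a_\sigma$ is trivial and ${}^{\sigma}\mu=\mu$ because $\sigma$ fixes $\omega$, so the relation is automatic. For $\tau$ we have $a_\tau=\overline{\sbmattrix{0}{1}{-2}{0}}$ and ${}^{\tau}\mu(z)=\omega^{-1}z$, since $\tau(\omega)=\omega^{2}=\omega^{-1}$; a direct multiplication then gives $\sbmattrix{\omega}{0}{0}{1}\sbmattrix{0}{1}{-2}{0}=\sbmattrix{0}{\omega}{-2}{0}$, which coincides, up to the scalar $\omega$, with $\sbmattrix{0}{1}{-2}{0}\sbmattrix{\omega^{-1}}{0}{0}{1}=\sbmattrix{0}{1}{-2\omega^{-1}}{0}$, i.e. exactly $\mu\,a_\tau=a_\tau\,{}^{\tau}\mu$ in $\mathrm{PGL}_2(L)$.

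For the vertex identities I would use that $\mu$ is the affine map $z\mapsto\omega z$, so it sends a ball to a ball and acts by $\mu\bullet B_{a,r}=B_{\omega a,\,r}$ (since $\nu(\omega)=0$). As $\mu$ fixes the visual limits $0$ and $\infty$, it fixes the geodesic $\mathcal{P}_L(0,\infty)$ pointwise; concretely $w_1=B_{0,-1}$ and $w_0=B_{1,0}=B_{0,0}$ are centered at $0$, whence $\mu\bullet w_0=w_0$ and $\mu\bullet w_1=w_1$. For the remaining vertices, a short computation using $\omega^{2}+\omega+1=0$ and $\omega+1=-\omega^{2}$ gives $\omega z_1-z_3=-\sqrt{3}\,\omega^{-1}$, an element of valuation $0$, so $\mu\bullet v_{1,2}=B_{\omega z_1,0}=B_{z_3,0}=v_{3,4}$; similarly $\omega z_3=\tfrac{-1+\sqrt{3}}{2}$ identifies $\mu\bullet v_{3,4}=B_{\omega z_3,0}$ with $w$. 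Finally $\mu\bullet w=v_{1,2}$ is forced by $\mu^{3}=\mathrm{id}$, which holds because $\omega^{3}=1$: indeed $\mu\bullet w=\mu^{2}\bullet v_{3,4}=\mu^{3}\bullet v_{1,2}=v_{1,2}$.

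The bookkeeping of centers and valuations is routine; the one step demanding care is the reduction of the commutation statement to the $\mathrm{PGL}_2$ identity, since one must apply \eqref{eq comp actions} to the \emph{twisted} action rather than the standard one and track the cocycle $a_g$ separately on each coset of $\mathcal{H}$. Once that reduction is secured, the rest is a direct verification.
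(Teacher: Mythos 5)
Your proof is correct. The vertex identities follow essentially the paper's own computation --- both arguments rest on $\nu(\omega)=0$ (fixing $w_0$, $w_1$) and on $\omega z_1-z_3=-\omega^2\sqrt{3}$ having valuation $0$ (giving $\mu\cdot v_{1,2}=v_{3,4}$) --- but your commutativity argument takes a genuinely different route. The paper observes that $\mu$ is induced by conjugation by $\mathbf{m}=f\left(\frac{\mathtt{j}+\mathtt{1}}{2}\right)=\sbmattrix{-\omega^2}{0}{0}{-\omega}$, i.e.\ by an inner automorphism of the quaternion algebra that is defined over $\mathbb{Q}_2$; since the twisted Galois action on vertices is just the Galois action on maximal orders (Eq.\ \eqref{aconsp}), commutation is then automatic. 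You instead verify the equivariance relation $\mu\,a_g=a_g\,{}^{g}\mu$ in $\mathrm{PGL}_2(L)$ coset by coset, which is exactly the statement that the class of $\sbmattrix{\omega}{0}{0}{1}$ is fixed by the twisted Galois action on $\mathrm{PGL}_2(L)$, i.e.\ that $\mu$ comes from a Galois-fixed quaternion; your reduction to $\mathcal{H}$ and $\tau$ is legitimate, since $a_g$ and ${}^{g}\mu$ depend only on the coset $g\mathcal{H}$ (or by the multiplicativity of the relation, which follows from the cocycle identity). The paper's route is shorter and explains why the relation holds; yours is self-contained and needs no inspired guess of the element $\frac{\mathtt{j}+\mathtt{1}}{2}$. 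One step to tighten: $w$ is defined only implicitly (the remaining neighbor of $v_c$ below), so identifying $\mu\cdot v_{3,4}=B_{\omega z_3,0}$ with $w$ requires checking that this ball also differs from $v_{1,2}$ (e.g.\ $\nu(\omega z_3-z_1)=-\tfrac{1}{2}$); alternatively, the injectivity of $\mu$ together with $\mu^{3}=\mathrm{id}$, which you already invoke, forces $\mu\cdot v_{3,4}$ to be the fourth neighbor --- this is in effect the paper's cyclic-permutation argument via $\nu(1-\omega)=0$.
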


\begin{proof}
    It suffices to observe that $\mu$ is induced by the 
    conjugation $\mathbf{x}\mapsto
    \mathbf{m}\mathbf{x}\mathbf{m}^{-1}$,
    where $\mathbf{m}=
    f\left(\frac{\mathtt{j}+\mathtt{1}}2\right)=
    \sbmattrix{-\omega^2}00{-\omega}$, 
    which is an automorphism of the quaternion
    algebra that is defined over $\mathbb{Q}_2$.
    The first two identities follow from the fact that
    $w_0$ and $w_1$ correspond to balls containing $0$ and 
    $\nu(\omega)=0$. Since $\nu(1-\omega)=0$, 
    $\mu$ permutes cyclically the three neighbors of 
    $v_c$ in the tree $\widehat{\mathcal{T}}_L$, for 
    $L=\mathbb{Q}_2(\sqrt{-1}, \sqrt{-3})$, namely 
    $v_{1,2}$, $w$ and $v_{3,4}$. 
    Finally, $\omega z_1-z_3=-\omega^2\sqrt3$, so that 
    $\nu(\omega z_1-z_3)=0$, and therefore 
    $\mu \cdot v_{1,2}=v_{3,4}$. The result follows.
\end{proof}

\begin{Prop}\label{prop rep int fields}
Set $A=\{2,-2,6,-6\}$ and $B=\{3,-1\}$.
If $a,a'\in A$ and $b,b'\in B$, then
$\mathrm{IF}_{\rho}^{Q_8}\big(\Omega/\mathbb{Q}_2(\sqrt{a})\big)=
\mathrm{IF}_{\rho}^{Q_8}\big(\Omega/\mathbb{Q}_2(\sqrt{a'})\big)$
and 
$\mathrm{IF}_{\rho}^{Q_8}\big(\Omega/\mathbb{Q}_2(\sqrt{b})\big)=
\mathrm{IF}_{\rho}^{Q_8}\big(\Omega/\mathbb{Q}_2(\sqrt{b'})\big)$,
while the intersection
$\mathrm{IF}_{\rho}^{Q_8}\big(\Omega/\mathbb{Q}_2(\sqrt{a})\big)\cap
\mathrm{IF}_{\rho}^{Q_8}\big(\Omega/\mathbb{Q}_2(\sqrt{b})\big)$
contains a single class of representations.
\end{Prop}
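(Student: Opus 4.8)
The plan is to translate the statement into combinatorics inside the ball $\mathcal{S}_{\Omega}(\mathbf{u},\mathbf{v})$ and then compare, for the various quadratic subfields $M$, the four-vertex sets it cuts out. By Prop.~\ref{prop equiv rep and conj class} (applied as in Th.~\ref{main teo 0b}, $\rho$ being absolutely irreducible) the set $\mathrm{IF}_{\rho}^{Q_8}(\Omega/M)$ is identified with $V\big(\mathcal{S}_{M}(\mathbf{u},\mathbf{v})\big)$, viewed inside $V\big(\mathcal{S}_{\Omega}(\mathbf{u},\mathbf{v})\big)$. For a ramified quadratic $M=\mathbb{Q}_2(\sqrt{d})$ with $d\neq -3$, Lemma~\ref{lemma S(u,v) is a ball} shows $\mathcal{S}_{M}(\mathbf{u},\mathbf{v})$ is the ball of radius $\nu(2)/2$ about $v_c$ in $\widehat{\mathcal{T}}_{M}$, hence consists of $v_c$ together with its three neighbours (valency $3$). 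Thus every set in the statement has exactly four elements, all containing $v_c$; the task is to compare the three attached neighbours, which lie among the twenty outer vertices of the branch.

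The first engine is Prop.~\ref{prop int rep in many fields}. Both $A$ and $B$ are stable under multiplication by $-3$ modulo squares, with pairs $\{2,-6\}$, $\{-2,6\}$ inside $A$ and the single pair $\{-1,3\}$ inside $B$. For such a pair $\{d,-3d\}$ put $E=\mathbb{Q}_2(\sqrt{-3})$; then $E(\sqrt{d})=E(\sqrt{-3d})$ is a biquadratic field containing $E$ whose two ramified quadratic subfields are exactly $\mathbb{Q}_2(\sqrt{d})$ and $\mathbb{Q}_2(\sqrt{-3d})$. Prop.~\ref{prop int rep in many fields} gives a six-vertex branch with $w_0,w_1$ the two classes defined over $E$; since $\mathrm{Gal}\big(\Omega/\mathbb{Q}_2(\sqrt{d})\big)$ contains an element flipping $\sqrt{-3}$, which by the proof of Lemma~\ref{lemma S(u,v) is a ball} interchanges $w_0$ and $w_1$, neither is defined over a ramified subfield. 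Hence the remaining four classes are $v_c$ and the three non-$E$ neighbours of $v_c$, and these are common to both subfields. This already proves the $B$-assertion, $\mathcal{S}_{\sqrt{-1}}=\mathcal{S}_{\sqrt{3}}=\{v_c,v_{1,2},v_{3,4},w\}$, and yields $\mathcal{S}_{\sqrt{2}}=\mathcal{S}_{\sqrt{-6}}$ and $\mathcal{S}_{\sqrt{-2}}=\mathcal{S}_{\sqrt{6}}$ inside $A$.

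The main obstacle is to bridge the two $A$-pairs, i.e.\ to prove $\mathcal{S}_{\sqrt{2}}=\mathcal{S}_{\sqrt{-2}}$; this cannot be obtained from an $E$-containing biquadratic, since $\mathbb{Q}_2(\sqrt{2},\sqrt{-2})=\mathbb{Q}_2(\sqrt{2},\sqrt{-1})$ omits $\sqrt{-3}$. I would work in this totally ramified biquadratic $\Psi=\mathbb{Q}_2(\sqrt{2},\sqrt{-1})$, whose three ramified quadratic subfields are $\mathbb{Q}_2(\sqrt{2}),\mathbb{Q}_2(\sqrt{-2})\in A$ and $\mathbb{Q}_2(\sqrt{-1})\in B$, and whose branch is the ten-vertex ball of Cor.~\ref{Cor int rep of Q8 over Psi}: the centre $v_c$, its three neighbours, and two further neighbours of each. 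As $\Psi/F_i$ is ramified for each subfield $F_i$, every edge at $v_c$ is the midpoint of an $F_i$-edge continuing to a single $F_i$-vertex at distance $\nu(2)/2$, so $\mathcal{S}_{F_i}$ selects exactly one of the two grandchildren in each of the three directions at $v_c$. The plan is to read off these selections directly from the twisted $\mathrm{Gal}(\Omega/F_i)$-action (as in Eq.~\eqref{eq twisted form}, here with uniformizer $-2$) on the explicit balls attached to the outer vertices, and to check that the two $A$-subfields select the same grandchild in every direction while $\mathbb{Q}_2(\sqrt{-1})$ selects the complementary one. The order-$3$ automorphism $\mu$ of Lemma~\ref{lemma perm}, which commutes with every twisted Galois action, fixes $v_c$ and cyclically permutes the three directions, reduces this to a single direction. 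This explicit bookkeeping of the twisted action on the twenty outer balls is the technical heart; granting it, $\mathcal{S}_{\sqrt{2}}=\mathcal{S}_{\sqrt{-2}}=\mathcal{S}_{\sqrt{6}}=\mathcal{S}_{\sqrt{-6}}$ and the $A$-assertion follows.

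For the intersection statement, let $a\in A$, $b\in B$ and suppose a vertex lies in $\mathcal{S}_{\sqrt{a}}\cap\mathcal{S}_{\sqrt{b}}$. It is then fixed by the twisted actions of both $\mathrm{Gal}\big(\Omega/\mathbb{Q}_2(\sqrt{a})\big)$ and $\mathrm{Gal}\big(\Omega/\mathbb{Q}_2(\sqrt{b})\big)$; as $a,b$ represent distinct nontrivial square classes, these subgroups generate $\mathrm{Gal}(\Omega/\mathbb{Q}_2)$, so the vertex is invariant under the full twisted action. Since $\mathfrak{A}$ is a division algebra, its reduced building over $\mathbb{Q}_2$ is a single point, realised inside the branch as the centre $v_c$ (the unique invariant vertex of $\mathcal{P}_{L}(0,\infty)$ by Lemma~\ref{lemma action on p0inf}, the surrounding directions being genuinely moved by the twist); hence the intersection is exactly $\{v_c\}$. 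Alternatively, once the explicit triples of the previous steps are in hand, one checks directly that the $A$-triple and the $B$-triple $\{v_{1,2},v_{3,4},w\}$ are disjoint. Either way $\mathcal{S}_{\sqrt{a}}\cap\mathcal{S}_{\sqrt{b}}=\{v_c\}$ contains a single class, completing the proof.
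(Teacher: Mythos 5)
Your reduction to combinatorics inside the ball $\mathcal{S}_{\Omega}(\mathbf{u},\mathbf{v})$, and your use of Prop.~\ref{prop int rep in many fields} to handle the pairs $\{d,-3d\}$ (which gives the $B$-assertion together with $\mathcal{S}_{\sqrt{2}}=\mathcal{S}_{\sqrt{-6}}$ and $\mathcal{S}_{\sqrt{-2}}=\mathcal{S}_{\sqrt{6}}$), are correct and coincide with the paper's strategy. The genuine gap is exactly at the point where you write ``granting it'': the claim that the two $A$-type subfields of $\Psi=\mathbb{Q}_2(\sqrt{2},\sqrt{-1})$ select the \emph{same} grandchild of $v_c$ in every direction while $\mathbb{Q}_2(\sqrt{-1})$ selects the complementary one is not bookkeeping that can be deferred --- it is the whole content of the proposition, and it is precisely what the paper's proof spends its second half establishing. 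The paper does it by working over the fields $EF$ with $E=\mathbb{Q}_2(\sqrt{-3})$, where the trivialization \eqref{eq trivialization} is defined, so the tree is \emph{untwisted} there and the relevant neighbor $v_x$ of $v_c$ becomes a concrete ball determined by a uniformizer $\pi$ of $EF$; then, writing $\eta=(1+\sqrt{-1})/\sqrt{2}=1+\epsilon$, it computes $1+\sqrt{-1}\equiv\epsilon^2$ and $\sqrt{2}\equiv\sqrt{-2}\equiv\epsilon^2+\epsilon^3\pmod{2}$, and comparison modulo $4$ yields $v_2=v_{-2}\neq v_{-1}$. Your alternative plan of reading off the \emph{twisted} Galois action over $\Psi$ (where $\sqrt{-3}\notin\Psi$, so the twist is genuinely present) is in principle possible but strictly harder than the paper's untwisted computation, and in any case you never carry it out; without it neither the $A$-assertion nor the intersection statement is proved.

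Your fallback argument for the intersection is also unsound as stated. From invariance under the full twisted $\mathrm{Gal}(\Omega/\mathbb{Q}_2)$-action you cannot conclude the vertex is $v_c$ by invoking that the reduced building of the division algebra over $\mathbb{Q}_2$ is a point: $\Omega/\mathbb{Q}_2$ is wildly ramified, and the paper itself emphasizes (Prop.~\ref{prop hairs}, Ex.~\ref{ex pelitos}, and the citations of Tits and Rousseau) that for wild ramification the Galois-fixed locus of the tree strictly exceeds the tree of the fixed field. Lemma~\ref{lemma action on p0inf} only controls invariant vertices on the path $\mathcal{P}_L(0,\infty)$, not in the components $C_2,C_3,C_4$ where the disputed vertices live; ``the surrounding directions being genuinely moved by the twist'' is an unsupported assertion. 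Indeed, deciding whether any branch vertex besides $v_c$ is fixed by the whole twisted action is essentially equivalent to the computation you skipped: if $v_2$, $v_{-2}$ and $v_{-1}$ all coincided, that common vertex would be fixed by the full group, and your intersection claim would be false. So both halves of the statement ultimately rest on the missing uniformizer--congruence calculation.
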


\begin{proof}
    All these sets contain the representation corresponding to the
    vertex $v_c$. It suffices to check whether the vertices at 
    distance $\frac{\nu(2)}2$,
    in each direction,
    belonging to each tree
    are equal or different. 
    More precisely, 
    let $C_0$, $C_1$, $C_2$, $C_3$ and $C_4$ be
    the connected components of the space obtained by 
    removing $v_c$ from $\widehat{\mathcal{T}}_\Omega$, 
    with $w_0$ in $C_0$
    and $w_1$ in $C_1$. Then the
    action of $\mu$ from Lemma \ref{lemma perm} permutes the 
    three components $C_2$, $C_3$ and $C_4$. Let 
    $E=\mathbb{Q}_2(\sqrt{-3})$, as before.
    Set $F=\mathbb{Q}(\sqrt{x})$ for $x\in A\cup B$.
    It follows from Prop. \ref{prop int rep in many fields} 
    that the branch $\mathcal{S}_{EF}(\mathbf{u}, \mathbf{v})$
    contains a unique vertex in each connected component.
    The trivialization given in Eq. \eqref{eq trivialization}, 
    is defined over the field $EF$. Hence, we can identify $\widehat{\mathcal{T}}_{EF}$ with the untwisted tree
    $\mathcal{T}_{EF}$, consistently for any $F$. With this
    trivialization, any visual limit of one of the connected
    component $C_2$, $C_3$ or $C_4$ corresponds to a uniformizer.
    Let $\pi=\pi_{EF}$ be a uniformizer, and assume it is a visual 
    limit of the component $C_2$. Let $v_x$ be the only neighbor of
    $v_c$ in the intersection $C_2\cap \widehat{\mathcal{T}}_{EF}$.
    This vertex corresponds to the ball $B_{\pi/2}^{[0]}=
    \frac12B_\pi^{[\nu(2)]}$. Since $v_x$ depends only on
    $EF$, it follows that $v_x=v_{3x}$ for $x\in\{2,-2,-1\}$.
    Note that $EF\in\{E(\sqrt{2}),E(\sqrt{-2}),E(\sqrt{-1})\}$, 
    so we can choose a uniformizer for each of these
    fields, and check them for congruence modulo $2$ to see if the
    corresponding vertices coincide or not. We choose
    $\sqrt{2}$ for $E(\sqrt{2})$, $\sqrt{-2}$ for $E(\sqrt{-2})$
    and $1+\sqrt{-1}$ for $E(\sqrt{-1})$.
    
    To make computations in $\Omega$ simpler, we define 
    $\eta=\frac{1+\sqrt{-1}}{\sqrt2}$, an eighth root of unity,
    and set $\eta=1+\epsilon$. Note that $2\mathcal{O}_\Omega=
    \epsilon^4\mathcal{O}_\Omega$. Then we have the following 
    computations:
    $$1+\sqrt{-1}=1+\eta^2=1+(1+\epsilon)^2\equiv
    \epsilon^2\,\, (\text{mod } 2).$$
    $$\sqrt{2}=\frac{1+\sqrt{-1}}\eta\equiv(\epsilon^2)
    (1-\epsilon+\epsilon^2-\epsilon^3)\equiv 
    \epsilon^2+\epsilon^3\,\, (\text{mod } 2).$$
    $$\sqrt{-2}\equiv(1+\epsilon^2)(\epsilon^2+\epsilon^3) \equiv
    \epsilon^2+\epsilon^3\,\, (\text{mod } 2).$$
    All these uniformizers correspond to visual limits
    in the same component, which can be assumed to be $C_2$, since
    they are congruent modulo $\epsilon^3$.
    The congruences modulo 4 tell us that $v_{-2}=v_2\neq v_{-1}$.
    By Prop. \ref{prop int rep in many fields}, the vertices
    corresponding to elements of
    $\mathrm{IF}_{\rho}^{Q_8}(\Omega/F)$
    are precisely the vertices in 
    $\mathcal{S}_{EF}(\mathbf{u}, \mathbf{v})$
    except for $w_0$ and $w_1$, namely
    $v_c$, $v_x$, $\mu\cdot v_x$ and $\mu^2\cdot v_x$. 
    The result follows.
\end{proof}

\begin{Prop}\label{tabla}
The fields of definition for each integral representation
over $\Omega$ can be read in Table \ref{table1}. Those
that correspond to a vertex in Fig. \ref{figura Q8 ghost}(B)
with no symbol are defined only over $\mathcal{O}_\Omega$.
\end{Prop}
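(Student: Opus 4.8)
The plan is to read the field of definition of each of the $26$ representations directly off the position of its vertex inside the ball $\mathcal{S}_\Omega(\mathbf{u},\mathbf{v})$, using the dictionary of Th. \ref{main teo 0}: the representation attached to a vertex $v$ is defined over an intermediate field $E$ splitting $\mathfrak{A}$ exactly when $v$ lies in $\mathcal{S}\cap\widehat{\mathcal{T}}_E$, i.e. when $v$ is fixed by $\mathcal{G}_{\Omega/E}$ for the twisted action of Eq. \eqref{eq twisted form} and its distance to $v_c$ is defined over $E$ in the sense of Lemma \ref{lemma defined over F}. By Cor. \ref{Cor int rep of Q8 over Omega} the ball is $v_c$ together with its five neighbors at normalized distance $\nu(2)/4$ and the twenty vertices at distance $\nu(2)/2$, so the proposition reduces to checking, layer by layer, a finite list of field conditions; this is precisely the information recorded in Table \ref{table1} and displayed in Fig. \ref{figura Q8 ghost}(B).

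First I would install the symmetry of Lemma \ref{lemma perm}: the transformation $\mu(z)=\omega z$ commutes with the twisted Galois action, fixes $v_c$, $w_0$ and $w_1$, and cyclically permutes $v_{1,2},v_{3,4},w$; hence it permutes the whole table and reduces each layer to a handful of $\mu$-orbits. I would then populate the table from the results already in hand. The center $v_c$ is a genuine vertex exactly over the ramified fields (Lemma \ref{lemma S(u,v) is a ball}), so it is defined over all six ramified quadratic fields and over every quartic with $e\geq2$. In the outer layer, $w_0$ and $w_1$ are the only vertices surviving over the unramified $E=\mathbb{Q}_2(\sqrt{-3})$ by Prop. \ref{prop int rep in many fields}, while the remaining outer vertices defined over a ramified quadratic are the $A$-triple $v_2=v_{-2}=v_6=v_{-6}$ and the $B$-triple $v_3=v_{-1}$ (with $v_2\neq v_{-1}$), together with their $\mu$-orbits, by Prop. \ref{prop rep int fields}. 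Matching the three mixed quartics $E(\sqrt2),E(\sqrt{-2}),E(\sqrt{-1})$ to their six vertices (Prop. \ref{prop int rep in many fields}) and the totally ramified biquadratic fields to their ten vertices (Cor. \ref{Cor int rep of Q8 over Psi}) then fixes every quartic entry, the $\mu$-symmetry forcing the choices to agree across orbits.

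The genuinely new computation, and the step I expect to be the main obstacle, concerns the five neighbors at distance $\nu(2)/4$. Since $\nu(2)/4\in\nu(E^*)$ only when $e(E/\mathbb{Q}_2)=4$, none of them is a vertex over a quadratic field or over a mixed quartic; consequently such a neighbor can be defined only over $\Omega$ and over those totally ramified biquadratic fields $\Psi$ for which it is $\mathcal{G}_{\Omega/\Psi}$-invariant, and pinning down this invariance is the crux, since Cor. \ref{Cor int rep of Q8 over Psi} counts the ten vertices over $\Psi$ without identifying which neighbors occur. I would settle it exactly as in Prop. \ref{prop rep int fields}, writing each neighbor as a ball of radius $-1/4$ or $-3/4$, choosing a uniformizer of each candidate $\Psi$, and comparing centres modulo powers of $\epsilon$ (where $2\mathcal{O}_\Omega=\epsilon^4\mathcal{O}_\Omega$) to decide whether the involution generating $\mathcal{G}_{\Omega/\Psi}$ fixes or moves it. Once this is resolved, a final count closes the argument: every vertex not reached over a proper subfield by the preceding steps is invariant under no nontrivial $\mathcal{G}_{\Omega/E}$, hence defined only over $\mathcal{O}_\Omega$, and these are precisely the vertices carrying no symbol in Fig. \ref{figura Q8 ghost}(B), which is the assertion of the proposition.
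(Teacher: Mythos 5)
Your proposal is correct, and on the center and the outer layer it is essentially the paper's own proof: the quadratic entries of Table \ref{table1} are read off Prop. \ref{prop int rep in many fields} and Prop. \ref{prop rep int fields}, and the quartic entries are assembled by aggregating the quadratic sub-extensions (the containment $S_E(\mathfrak{H})\subseteq S_L(\mathfrak{H})$ of Lemma \ref{lemma galois cont} makes this aggregation legitimate). Where you genuinely depart from the paper is the middle layer. You single out the identification of which of the five neighbors of $v_c$ survive over a totally ramified biquadratic $\Psi$ as "the crux", to be settled by a fresh coordinate computation with uniformizers and congruences modulo powers of $\epsilon$ in the style of Prop. \ref{prop rep int fields}. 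That computation would indeed work, but it is unnecessary, and the paper's one-line assertion that the ramification-index-$4$ branches "also contain the vertices marked with $\clubsuit$" rests on an argument already in hand: since $\sqrt{-3}\notin\Psi$, the proof of Lemma \ref{lemma S(u,v) is a ball} (applied with $L=\Psi$, $L'=\Omega$) shows that the nontrivial element $\xi$ of $\mathrm{Gal}(\Omega/\Psi)$ acts by $\xi * z = -1/\bigl(2\xi(z)\bigr)$ and exchanges $w_0$ and $w_1$; being simplicial and fixing $v_c$, it must swap the two neighbors of $v_c$ lying on the geodesics toward $w_0$ and $w_1$; and since $\widehat{\mathcal{T}}_\Psi=\widehat{\mathcal{T}}_\Omega^{\mathrm{Gal}(\Omega/\Psi)}$ has valency $3$ (the residue field of $\Psi$ is $\mathbb{F}_2$, as in the proof of Cor. \ref{Cor int rep of Q8 over Psi}), exactly three neighbors of $v_c$ are fixed --- necessarily the three remaining ones, i.e.\ the $\clubsuit$ triple, uniformly for all four fields $\Psi$. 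Equivalently, by counting: Cor. \ref{Cor int rep of Q8 over Psi} gives $10$ vertices, the quadratic subfields of $\Psi$ account for $1+3+3=7$ of them, and the missing three must be $\xi$-fixed middle vertices, which by the swap of $w_0,w_1$ can only be the $\clubsuit$ ones. So the two routes differ only here: yours buys explicitness (it would exhibit the $\clubsuit$ vertices as concrete balls of radius parameter $-1/4$ or $-3/4$), at the cost of repeating the kind of $\epsilon$-adic computation that Prop. \ref{prop rep int fields} was designed to carry out once; the paper's symmetry-plus-valency argument is shorter and reuses only what is already proved.
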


\begin{proof}
    The results for quadratic extensions are immediate 
    from Prop. \ref{prop int rep in many fields} and 
    Prop. \ref{prop rep int fields}. Then the results for extension 
    of degree $4$ follows by taking all representations
    defined over a quadratic sub-extension, except for
    the fact that the branch at an extension with 
    ramification index $4$ also contain the vertices marked
    with $\clubsuit$. The result follows.
\end{proof}

\begin{table}
\begin{center}
\begin{tabular}{|c|c||c|c|}
\hline
  $\text{Field $L$}$   &  $ \text{$\sharp \mathrm{IF}_{\rho}^{Q_8}(L)$ }$ &  $\text{Field $L$}$   &  \text{$\sharp \mathrm{IF}_{\rho}^{Q_8}(L)$ } \\
  \hline
$\mathbb{Q}_2(\sqrt{-1})$ & $4$ ($\diamondsuit$, $\spadesuit$) & $\mathbb{Q}_2(\sqrt{-1}, \sqrt{3})$ & $6$
( $*$, $\diamondsuit$, $\spadesuit$)\\
$\mathbb{Q}_2(\sqrt{3})$ & $4$ ($\diamondsuit$, $\spadesuit$) & $\mathbb{Q}_2(\sqrt{-1}, \sqrt{2})$ & $10$
($\circ$, $\diamondsuit$,$\clubsuit$, $\spadesuit$)\\
$\mathbb{Q}_2(\sqrt{-3})$ & $2$ ($*$) & $\mathbb{Q}_2(\sqrt{-1}, \sqrt{6})$ & $10$ 
($\circ$, $\diamondsuit$,$\clubsuit$, $\spadesuit$)\\
$\mathbb{Q}_2(\sqrt{2})$ & $4$ ($\circ$,$\spadesuit$) & $\mathbb{Q}_2(\sqrt{3}, \sqrt{2})$ & $10$ 
($\circ$, $\diamondsuit$,$\clubsuit$, $\spadesuit$)\\
$\mathbb{Q}_2(\sqrt{-2})$ & $4$ ($\circ$,$\spadesuit$) & 
$\mathbb{Q}_2(\sqrt{3}, \sqrt{-2})$ & $10$ 
($\circ$, $\diamondsuit$,$\clubsuit$, $\spadesuit$)\\
$\mathbb{Q}_2(\sqrt{6})$ & $4$ ($\circ$,$\spadesuit$) & $\mathbb{Q}_2(\sqrt{-3}, \sqrt{2})$ & 
$6$( $*$, $\circ$, $\spadesuit$)\\
$\mathbb{Q}_2(\sqrt{-6})$ & $4$ ($\circ$,$\spadesuit$) & 
$\mathbb{Q}_2(\sqrt{-3}, \sqrt{-2})$ & $6$( $*$, $\circ$, $\spadesuit$)\\
\hline
\end{tabular}
\vspace{-0.5cm}
\end{center}
\caption{$\mathcal{O}_{\Omega}$-representations with every possible field 
of definition. The symbols $*$, $\circ$, $\diamondsuit$, $\clubsuit$ 
and $\spadesuit$ refer to the position of the corresponding vertex in 
Fig. \ref{figura Q8 ghost}\textbf{(B)}.}\label{table1}
\end{table}

\section{Global integral representations of \texorpdfstring{$Q_8$}{Q8}}\label{section int rep of ham over global fields}

In this section, we describe the global set 
$\mathrm{IF}_{\rho}^{Q_8}(k)$
of integral
forms of the unique faithful representation
$\rho: Q_8 \to \mathrm{GL}_2(k)$,
when $k$ is a quadratic extension
of $\mathbb{Q}$ with ring of integers $\mathcal{O}_k$.
To do so, we denote 
by $k_{\nu}$ its completion at every place $\nu$, and 
by $\mathcal{O}_{\nu}$ its ring of local integers.
The ad\`ele ring $\mathcal{A}_k$ of $k$ in the subring
of the product $\Pi_{\nu} k_\nu$, over all completions of $k$,
whose coordinates belong to $\mathcal{O}_{\nu}$ at all but a finite number of coordinates. The id\`ele group $J_k:=\mathcal{A}_k^*$
is the group of units of $\mathcal{A}_k$.

By a global order $\mathfrak{D}$, we mean an 
$\mathcal{O}_k$-order. For any 
non-archimedean place $\nu$ we denote by $\mathfrak{D}_\nu$
the completion at $\nu$, which is a $\mathcal{O}_{\nu}$-order. 
It is well known 
that the order $\mathfrak{D}$
is completely determined by the family
$\{\mathfrak{D}_\nu\}_\nu$ in the sense made precise by the
local-global principles \cite[\S 81.E]{omeara}, stated,
for each pair of global orders $(\mathfrak{D}, \mathfrak{D}')$,
as follows:
\begin{itemize}
\item[(I)] $\mathfrak{D}_{\nu}=\mathfrak{D}'_{\nu}$, for every 
$\nu$ outside a finite set of exceptional places,
\item[(II)] if $\mathfrak{D}_{\nu}=\mathfrak{D}'_\nu$ for every 
$\nu$, then $\mathfrak{D}=\mathfrak{D}'$, and
\item[(III)] if $\lbrace \mathfrak{D}(\nu) \rbrace_{\nu}$ is a family 
of local orders such that there is a global order $\mathfrak{D}$ 
satisfying $\mathfrak{D}(\nu)=\mathfrak{D}_{\nu}$, for almost all
place $\nu$, then there is a global order $\mathfrak{D}'$ satisfying 
$\mathfrak{D}(\nu)=\mathfrak{D}'_{\nu}$, for all $\nu$.
\end{itemize}

An order $\mathfrak{D}$
is called a $\nu$-variant of $\mathfrak{D}$ if
$\mathfrak{D}_\mu=\mathfrak{D}'_\mu$ for every $\mu\neq\nu$.
We call it a $\nu$-neighbor if, furthermore,
the completions $\mathfrak{D}_\nu$ and $\mathfrak{D}'_\nu$ 
correspond to neighboring vertices in the tree at $\nu$.

According to \cite[\S1]{ArenasBranches}, there exists a distance map
$\boldsymbol{\rho}$ that associates, to every pair of maximal orders
$\mathfrak{D}$ and $\mathfrak{D}'$, an element in the Galois group
of a suitable extension $\Sigma/k$,
with the property that the orders are isomorphic
precisely when this distance is trivial.
The map is defined by 
$\boldsymbol\rho(\mathfrak{D},\mathfrak{D}')=[N(a),\Sigma/k]$,
where $a=(a_\nu)_\nu\in\mathrm{GL}_2(\mathcal{A}_k)$ 
is an adelic element satisfying
$\mathfrak{D}_\nu=a_\nu\mathfrak{D}_\nu a^{-1}_\nu$
at every place $\nu$, 
$N:\mathrm{GL}_2(\mathcal{A}_k)\rightarrow J_k$
denote the reduced norm on ad\`eles,
and $b\mapsto[b,\Sigma/k]$ is the Artin map on id\`eles.
This is true, in general,
for quaternion algebras splitting at some infinite place, 
but for a matrix algebra the field 
$\Sigma$ is the largest exponent two sub-extension
of the Hilbert class field, as proven in the paragraph
entitled ``Example 1 continued'' at the end of \S2 in \cite{Arenas2003}.
This field is the class field associated to the class group
$k^{*2}J_k^2$ of the id\`ele group $J_k$.
Recall that, for any pair of local maximal orders
corresponding to neighboring vertices there exists
a local matrix interchanging them whose determinant is a 
uniformizer. We conclude that
the distance between $\nu$-neighbors is trivial
if and only if the maximal ideal $P_\nu\subseteq\mathcal{O}_k$
associated to $\nu$ defines a square in the ideal class group.
Let us recall that $h_k(2)$ denotes the 
cardinality of the 
maximal exponent-$2$ subgroup of the class group $\mathbf{Cl}_k$.
Next two propositions prove Th. \ref{main teo 5}:

\begin{Prop}
Let $N \equiv 3 \,\, (\text{mod }8)$ be an positive integer.
Assume $Q_8$ has an integral representation over 
$k=\mathbb{Q}(\sqrt{-N})$, 
i.e., every prime divisor of $N$ is congruent mod $8$ to 
either $1$ or $3$.
Then it has precisely $2h_k(2)$ conjugacy classes of integral 
representations.
\end{Prop}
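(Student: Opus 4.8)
The plan is to reduce the global count to purely local branch data, assembled through the distance map $\boldsymbol{\rho}$ of \S\ref{section int rep of ham over global fields}, exactly as the examples in that section suggest. First I would record the local branches of $\mathfrak{H}=\mathcal{O}_k[\mathtt{u},\mathtt{v}]$ at every place $\nu$ of $k$. Since $N\equiv 3\pmod 8$, the discriminant of $k=\mathbb{Q}(\sqrt{-N})$ is $-N\equiv 5\pmod 8$, so $2$ is inert and the dyadic completion is $k_{\wp_{\mathbf 2}}=\mathbb{Q}_2(\sqrt{-3})$, the unramified quadratic extension of $\mathbb{Q}_2$. By Th.~\ref{main teo 4} and the row $\mathbb{Q}_2(\sqrt{-3})$ of Table~\ref{table1}, the local branch $\mathcal{S}_{k_{\wp_{\mathbf 2}}}(\mathbf{u},\mathbf{v})$ consists of exactly the two neighbouring vertices $w_0,w_1$ (the endpoints of the edge whose midpoint is $v_c$). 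At every place $\nu\nmid 2$ the residue characteristic is odd, so the image order $\mathfrak{H}_\nu$ is maximal and, by Ex.~\ref{coro rep Q8 over p neq 2}, the branch is a single vertex. Via the local--global principles (I)--(III), a maximal order of $\mathbb{M}_2(k)$ containing $\rho(Q_8)$ is the same datum as a homothety class of $\mathfrak{H}$-stable lattice, pinned down everywhere except for the binary dyadic choice $w_0$ or $w_1$; I will call these the two \emph{local types}, noting that global homothety induces local homothety and hence preserves the type.

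Next I would translate $\mathrm{IF}^{Q_8}_{\rho}(k)$ into a lattice count. Because $\rho$ is absolutely irreducible, $\mathrm{End}_{k[Q_8]}(\rho)=k$, so the argument of Prop.~\ref{prop equiv rep and conj class}, carried out globally, identifies $\mathrm{IF}^{Q_8}_{\rho}(k)$ with the set of $\mathfrak{H}$-lattices in $\rho$ that are \emph{free} of rank two over $\mathcal{O}_k$, taken up to homothety. Within one fixed local type the homothety classes form a torsor under $\mathbf{Cl}_k$ through $\Lambda\mapsto I\Lambda$; the Steinitz class of $I\Lambda_0$ is $[I]^2[\Lambda_0]$, and freeness is the condition $[I]^2=[\Lambda_0]^{-1}$. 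Hence a local type contributes nothing when $[\Lambda_0]\notin\mathbf{Cl}_k^{\,2}$, and otherwise contributes exactly $\#\mathbf{Cl}_k[2]=h_k(2)$ classes, the fibre of the squaring map. This is precisely where $h_k(2)$ enters, in harmony with the criterion recalled before the statement: the distance $\boldsymbol{\rho}$ between two orders is trivial exactly when the intervening ideal class is a square, so the square-class condition governs nonemptiness while the fibre size governs the count.

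Finally I would show that both local types are simultaneously realizable, which produces the factor $2$. The vertices $w_0$ and $w_1$ are $\wp_{\mathbf 2}$-neighbours, and passing from one to the other multiplies the Steinitz class of the associated global lattice by $[\wp_{\mathbf 2}]$; but $2$ is inert, so $\wp_{\mathbf 2}=2\mathcal{O}_k$ is principal and $[\wp_{\mathbf 2}]=0$. Thus the two types share the same base Steinitz class $[\Lambda_0]$. The hypothesis that $Q_8$ admits an integral representation over $k$ means $\mathrm{IF}^{Q_8}_{\rho}(k)\neq\varnothing$, so at least one type is nonempty, forcing $[\Lambda_0]\in\mathbf{Cl}_k^{\,2}$; by the previous paragraph each type then contributes $h_k(2)$, giving $2h_k(2)$ in all. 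The main obstacle I anticipate is the bookkeeping of the freeness condition: one must verify that $\mathrm{IF}$ really selects the $\mathcal{O}_k$-free lattices (trivial Steinitz class), which is strictly finer than the square-Steinitz condition detected by $\boldsymbol{\rho}$, yet still yields exactly $h_k(2)$ per type; and one must check that the two dyadic types remain distinct under homothety even though their endomorphism orders are abstractly isomorphic, so that the total is $2h_k(2)$ rather than $h_k(2)$ or $h_k$.
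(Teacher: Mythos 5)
Your argument is correct, and its skeleton is the same as the paper's: the same local input (the dyadic completion is $\mathbb{Q}_2(\sqrt{-3})$ and its branch consists of exactly the two neighbouring vertices $w_0,w_1$, while at every odd place the branch is a single vertex), the same local--global assembly, and the same use of the principality of $\wp_{\mathbf 2}$ (inert, generated by $2$) to make both dyadic types contribute. Where you genuinely diverge is in how the factor $h_k(2)$ per type is obtained. The paper argues at the level of maximal orders: there are exactly two global maximal orders containing $\rho(Q_8)$, the distance map $\boldsymbol{\rho}$ shows they are isomorphic because $[\wp_{\mathbf 2}]$ is trivial (hence a square), and the multiplicity $h_k(2)$ per order is then imported wholesale from \cite[Th.~7]{ArenasAguiloSaavedra}. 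You instead descend to homothety classes of $\mathfrak{H}$-stable lattices: within a fixed dyadic type these form a $\mathbf{Cl}_k$-torsor via $\Lambda_0\mapsto I\Lambda_0$; integral representations correspond, by absolute irreducibility exactly as in Prop.~\ref{prop equiv rep and conj class}, to the free lattices; freeness means trivial Steinitz class; and the relation $\mathfrak{st}(I\Lambda_0)=[I]^2\,\mathfrak{st}(\Lambda_0)$ reduces the count to the fibre of the squaring map on $\mathbf{Cl}_k$, of cardinality $h_k(2)$. This buys a self-contained proof of the multiplicity that the paper only cites, and it isolates precisely where the nonemptiness hypothesis enters: it forces $\mathfrak{st}(\Lambda_0)\in\mathbf{Cl}_k^{2}$ for one type, hence for both, since switching $w_0\leftrightarrow w_1$ changes the Steinitz class by the trivial class $[\wp_{\mathbf 2}]$. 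The paper's route is shorter and stays within its order-theoretic formalism; yours is longer but makes the appearance of $h_k(2)$ transparent. One phrase of yours should be repaired: a maximal order containing $\rho(Q_8)$ is the datum of a full $\mathbf{Cl}_k$-orbit of homothety classes of stable lattices (a ``type''), not of a single homothety class; but the count you actually carry out is the torsor-plus-Steinitz one, which is the correct statement, so nothing downstream is affected.
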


\begin{proof}
The field $k$ has a unique dyadic place, 
at which $2$ is a uniformizer.
The corresponding completion is 
$k_2\cong\mathbb{Q}_2(\sqrt{-3})$.
Prop. \ref{prop int rep in many fields}
shows that the image of a given representation 
$\rho:Q_8\rightarrow \mathrm{GL}_2(k)$
is contained in precisely two local maximal orders. 
Furthermore, at every non-dyadic place, there is a unique
maximal order containing $\rho(Q_8)$ according to
Cor. \ref{coro rep Q8 over p neq 2} and Prop. 
\ref{prop equiv rep and conj class}. We conclude that,
globally, there are two maximal orders 
$\mathfrak{D}_1$ and $\mathfrak{D}_2$ containing
that image, and they differ only at the dyadic place.
Since the dyadic place is principal, these maximal orders 
are isomorphic. It follows from 
\cite[Th. 7]{ArenasAguiloSaavedra} that each 
corresponds to $h_k(2)$ representations.
\end{proof}

\begin{Prop}
Let $N \equiv a \,\, (\text{mod }8)$, with $a\in\{1,2,5,6,7\}$,
be an positive integer. 
Let $\mathbf{2}$ be the only dyadic place in 
$k=\mathbb{Q}(\sqrt{-N})$, and let $[\mathbf{2}]$ be its image 
in the class group $\mathbf{Cl}_k$.
Assume $Q_8$ has an integral representation over $k$. Then:
\begin{itemize}
\item[(a)] $\sharp \mathrm{IF}_{\rho}^{Q_8}(k)=4h_k(2)$, 
when $[\mathbf{2}]$ is an square, and
\item[(b)] $\sharp \mathrm{IF}_{\rho}^{Q_8}(k)\in \lbrace h_k(2), 3h_k(2)\rbrace $, in any other case.
\end{itemize}
\end{Prop}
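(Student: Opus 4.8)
The plan is to imitate the proof of the preceding proposition, replacing the inert dyadic behaviour of the $a=3$ case by the ramified behaviour occurring when $a\in\{1,2,5,6\}$. First I would pin down the dyadic completion $k_{\mathbf 2}$. Since $\mathbf 2$ is assumed to be the only dyadic place, $2$ cannot split in $k/\mathbb{Q}$; because $-N\equiv 1\pmod 8$ exactly when $a=7$, this rules out $a=7$ (so for that value the hypothesis is vacuous), and for $a\in\{1,2,5,6\}$ the prime $2$ ramifies, so $k_{\mathbf 2}$ is a ramified quadratic extension of $\mathbb{Q}_2$. Although $a\bmod 8$ does not determine $k_{\mathbf 2}$ on the nose, in every case it is one of the ramified quadratics listed in Table \ref{table1}, each of which yields $\sharp\mathrm{IF}=4$. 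Concretely, by Lemma \ref{lemma S(u,v) is a ball} the local branch $\mathcal{S}_{k_{\mathbf 2}}(\mathbf u,\mathbf v)$ is the ball of radius $\nu(2)/2$ about a genuine central vertex $\mathfrak{D}_c$ (genuine since $e(k_{\mathbf 2}/\mathbb{Q}_2)=2$ is even), consisting of $\mathfrak{D}_c$ together with its three neighbours $\mathfrak{D}_1,\mathfrak{D}_2,\mathfrak{D}_3$.

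Next I would globalize. At every non-dyadic place there is a unique local maximal order containing the image, by Cor. \ref{coro rep Q8 over p neq 2} together with Prop. \ref{prop equiv rep and conj class}; hence the local--global principle (III) identifies the global maximal orders containing $\rho(Q_8)$ with the four dyadic choices above, giving exactly four global orders $\mathfrak{D}_c,\mathfrak{D}_1,\mathfrak{D}_2,\mathfrak{D}_3$ that agree away from $\mathbf 2$. To sort these into isomorphism classes I would use the distance map $\boldsymbol{\rho}$: since $\mathfrak{D}_c$ and each $\mathfrak{D}_i$ are $\mathbf 2$-neighbours, the connecting adelic element has reduced norm a uniformizer at $\mathbf 2$, so $\boldsymbol{\rho}(\mathfrak{D}_c,\mathfrak{D}_i)=[\mathbf 2]$ in $\mathrm{Gal}(\Sigma/k)\cong\mathbf{Cl}_k/\mathbf{Cl}_k^2$; additivity of the map and the exponent-$2$ relation then give $\boldsymbol{\rho}(\mathfrak{D}_i,\mathfrak{D}_j)=[\mathbf 2]^2=1$. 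Thus the three neighbours are mutually isomorphic, while $\mathfrak{D}_c$ is isomorphic to them precisely when $[\mathbf 2]$ is trivial in $\mathbf{Cl}_k/\mathbf{Cl}_k^2$, that is, precisely when $[\mathbf 2]$ is a square. Hence in case (a) all four orders are isomorphic, whereas in case (b) they split into the two isomorphism classes $\{\mathfrak{D}_c\}$ and $\{\mathfrak{D}_1,\mathfrak{D}_2,\mathfrak{D}_3\}$.

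Finally I would count representations. By \cite[Th. 7]{ArenasAguiloSaavedra} each global maximal order isomorphic to $\mathbb{M}_2(\mathcal{O}_k)$ accounts for exactly $h_k(2)$ conjugacy classes of integral representations, whereas an order not isomorphic to the standard one carries none, since an integral $\rho'$ has image in a conjugate of $\mathbb{M}_2(\mathcal{O}_k)$. The hypothesis $\mathrm{IF}_{\rho}^{Q_8}(k)\neq\varnothing$ guarantees that at least one of the four orders is free, and since all free orders lie in a single isomorphism class, the free ones constitute exactly one of the classes found above. In case (a) all four are then free, yielding $4h_k(2)$; in case (b) the free class is either $\{\mathfrak{D}_c\}$ or $\{\mathfrak{D}_1,\mathfrak{D}_2,\mathfrak{D}_3\}$, yielding $h_k(2)$ or $3h_k(2)$ respectively.

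The step I expect to be the main obstacle is this last one: making precise, via \cite[Th. 7]{ArenasAguiloSaavedra} and the global analogue of Prop. \ref{prop equiv rep and conj class}, that exactly the free orders contribute and that each free order contributes the full $h_k(2)$ classes (this is where absolute irreducibility of $\rho$, forcing the centralizer to be scalar, enters), together with the observation that one cannot decide a priori which of the two classes in case (b) is the free one. This last indeterminacy is exactly what produces the dichotomy between $h_k(2)$ and $3h_k(2)$ and explains the remark that the two subcases cannot be separated without writing down an explicit representation.
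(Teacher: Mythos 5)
Your proof is correct and takes essentially the same route as the paper's: the same local count of four dyadic maximal orders (the center $\spadesuit$ plus its three leaves), the same globalization via the local--global principle (III), the same use of the distance map $\boldsymbol{\rho}$ and the Artin map to decide when $\mathbf{2}$-neighbors are conjugate, and the same appeal to \cite[Th. 7]{ArenasAguiloSaavedra} to get the factor $h_k(2)$ per conjugacy class of orders. Your explicit check that the three leaves are mutually conjugate (from $\boldsymbol{\rho}(\mathfrak{D}_i,\mathfrak{D}_j)=[\mathbf{2}]^2=1$ in the exponent-two group), and your observation that the case $a=7$ is vacuous, merely make precise points the paper leaves implicit.
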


\begin{proof}
  The proof of (a) is entirely identical to the preceding one, 
    except for one detail. Now the local maximal orders 
    at the dyadic place containing the image of the 
    representation are $4$. Indeed, they correspond to
    the following type of vertices:
    \begin{itemize}
        \item The central vertex denoted by  $\spadesuit$
        in Fig. \ref{figura Q8 ghost}.
        \item The leaves denoted by either $\diamondsuit$ or
        $\circ$, according to the congruence class of $N$ modulo $8$.
    \end{itemize}
    In case (b), the center $\spadesuit$ corresponds to a maximal order that is not 
    conjugate, over $k$, to the orders corresponding to the 
    leaves, as they are $\mathbf{2}$-neighbors. We conclude
    that vertices of at most one type correspond to
    maximal orders that are conjugate to
    $\mathbb{M}_2(\mathcal{O}_K)$ and yield,
    therefore, integral representations.
\end{proof}

\begin{Example}\label{ex 3hk}
    Let $k=\mathbb{Q}(\sqrt{-5})$. Then a representation of 
    $Q_8$ can be defined using the matrices
    $$\mathbf{i}=\sbmattrix01{-1}0\qquad
    \textnormal{ and }\qquad
    \mathbf{j}=\frac12\sbmattrix1{\sqrt{-5}}{\sqrt{-5}}{-1}.$$
    Then the order $\mathcal{O}_k[\mathbf{i},\mathbf{j}]$ coincides
    with the maximal order $\mathbb{M}_2(\mathcal{O}_k)$ at every 
    place except the dyadic place $\mathbf{2}$, which is not
    principal. Since either of the algebras $k[\mathbf{i}]$
    or $k[\mathbf{j}]$ is isomorphic to the unramified
    quadratic extension over $k$. In this case, the group
    $\langle\mathbf{i},\mathbf{j}\rangle$ is contained in 
    precisely $4$ maximal orders
    located in a ball with a center and four leaves.
    Since $a\mathbf{j}$, for $a\in\mathcal{O}_k$, belongs to $\mathbb{M}_2(\mathcal{O}_k)$
    precisely when $2$ divides $a$, the closest vertex
    to $v_0^{[0]}$ in the branch, which is necessarily a leaf,
    is at distance $\nu(2)$ from that vertex (c.f. Lemma 3.6).
    Since $\nu(2)=2\nu(\pi_{\mathbf{2}})$ for a uniformizer 
    $\pi_{\mathbf{2}}$, we conclude
    that $\mathrm{IF}_{\rho}^{Q_8}(k)$ has
    $3h_k(2)=6$ elements in this case.
\end{Example}

\begin{Example}\label{ex hk}
    Let $k=\mathbb{Q}(\sqrt{-6})$. Then a representation of 
    $Q_8$ can be defined using the matrices
    $$\mathbf{i}=\sbmattrix01{-1}0\qquad
    \textnormal{ and }\qquad
    \mathbf{j}=\sbmattrix{1+\frac12\sqrt{-6}}
    {1-\frac12\sqrt{-6}}{1-\frac12\sqrt{-6}}{-1-\frac12\sqrt{-6}}
    .$$
    Computations are similar to those in the 
    preceding example, except that, in this case 
    $a\mathbf{j}$ belongs to $\mathbb{M}_2(\mathcal{O}_k)$
    precisely when $a$ belongs to the maximal ideal $\mathbf{2}$,
    since $\frac2{\sqrt{-6}}$ is a uniformizer at that place.
    Then the leaf that is closest to $v_0^{[0]}$ is actually a neighbor of this vertex. We conclude
    that $\mathrm{IF}^{Q_8}_{\rho}(k)$ has
    $h_k(2)=2$ elements.
\end{Example}

\section*{Acknowledgements}
The authors would like to thank Professor Jean-Pierre Serre for his valuable comments and suggestions.

\bibliographystyle{amsalpha}
\bibliography{refs.bib}

\end{document}